\documentclass[11pt]{amsart} 

\usepackage{amsmath}
\usepackage{amsthm}
\usepackage{amscd}
\usepackage{amssymb}
\usepackage{graphicx}
\usepackage{latexsym} 
\usepackage[all]{xy}
\usepackage{enumerate}
\usepackage{color}
\usepackage[dvipsnames]{xcolor}
\usepackage{fullpage}
%\usepackage{CP1251}{inputenc}

%%%%% from CHL3,  6. april 06 %%%%%%%

\newcommand{\cal}{\mathcal}

%%%%%%%%%

\def\epsilon{\varepsilon}
\def\phi{\varphi}

\def\hat{\widehat}
\newcommand{\C}{\mathbb C}

\newcommand{\Aut}{\mbox{Aut}}

%%%%%

%\newcommand{\cvnbar}{{\overline{cv}}_N}
%\newcommand{\CVnbar}{{\overline{CV}}_N}

   % F ou F_n ou F_N ?

   % F ou F_n ou F_N ?

%\newcommand{\cvnbar}{\mbox{\overline{cv}}_N}

\newcommand{\R}{\mathbb R}
\newcommand{\Z}{\mathbb Z}

\newcommand{\N}{\mathbb N}

%\newcommand{\gcd}{\mbox{g.c.d.}}

%% Multi-author Editing macros -----------------------------------------------

\def\strutdepth{\dp\strutbox}
\def \ss{\strut\vadjust{\kern-\strutdepth \sss}}
\def \sss{\vtop to \strutdepth{
\baselineskip\strutdepth\vss\llap{$\diamondsuit\;\;$}\null}}

\def\strutdepth{\dp\strutbox}
\def \sst{\strut\vadjust{\kern-\strutdepth \ssss}}
\def \ssss{\vtop to \strutdepth{
\baselineskip\strutdepth\vss\llap{$\spadesuit\;\;$}\null}}

\def\strutdepth{\dp\strutbox}
\def \ssh{\strut\vadjust{\kern-\strutdepth \sssh}}
\def \sssh{\vtop to \strutdepth{
\baselineskip\strutdepth\vss\llap{$\heartsuit\;\;$}\null}}

%%% --------------------------------------------------------------------------

\def\qed{\hfill\rlap{$\sqcup$}$\sqcap$\par}
\def\bar{\overline}

%%%%%%%%end from CHL3 %%%%%%%%

%% Multi-author Editing macros -----------------------------------------------

\def\strutdepth{\dp\strutbox}
\def \ss{\strut\vadjust{\kern-\strutdepth \sss}}
\def \sss{\vtop to \strutdepth{
\baselineskip\strutdepth\vss\llap{$\diamondsuit\;\;$}\null}}

\def\strutdepth{\dp\strutbox}
\def \sst{\strut\vadjust{\kern-\strutdepth \ssss}}
\def \ssss{\vtop to \strutdepth{
\baselineskip\strutdepth\vss\llap{$\spadesuit\;\;$}\null}}
%%% --------------------------------------------------------------------------

\def\qed{\hfill\rlap{$\sqcup$}$\sqcap$\par}

% THEOREM Environments ---------------------------------------------------

\newtheorem{thm}{Theorem}[section]
\newtheorem{cor}[thm]{Corollary}
\newtheorem{lem}[thm]{Lemma}

\newtheorem{prop}[thm]{Proposition}

\theoremstyle{definition}

\newtheorem{defn}[thm]{Definition}

\newtheorem{convention-defn}[thm]{Convention-Definition}
\newtheorem{defn-rem}[thm]{Definition-Remark}
\newtheorem{rem}[thm]{Remark}
\newtheorem{remark}[thm]{Remark}

\numberwithin{equation}{section}
%%% ----------------------------------------------------------------------

\begin{document} 

\author[Martin Lustig]{Martin Lustig}
\address{\tt Aix Marseille Universit\'e, CNRS, Centrale Marseille, I2M, UMR 7373, 
13453  Marseille, France}
 \email{\tt Martin.Lustig@univ-amu.fr} 

\author[Yoav Moriah]{Yoav Moriah} 
 \address{\tt Department of Mathematics at the Technion, IIT,
 Haifa
 Israel 32000
}
 \email{\tt ymoriah@technion.ac.il} 
 
\subjclass[2010]{Primary 57M99, 20H10}

\keywords{Nielsen equivalence, Fuchsian groups, Fox derivatives, generating systems, 
Heegaard splittings} 

%\thanks{}

\title[Nielsen equivalence]
{Nielsen equivalence in Fuchsian groups}
 
\begin{abstract} 
In this paper we give a complete classification of minimal generating systems in a very general class 
of Fuchsian groups $G$. This class includes for example any $G$ which has at least seven non-conjugate 
cyclic subgroups of order $\gamma_i \geq 3$. In particular, the well known 
problematic cases where $G$ has characteristic exponents $\gamma_i = 2$ are  not excluded.

We classify generating systems up to Nielsen equivalence; this notion is strongly related to Heegaard splittings 
of $3$-manifolds. The results of this paper provide in particular the tools for a rather general extension of 
previous work of the authors and others, on the isotopy classification of such splittings in Seifert fibered spaces. 
\end{abstract}

\date{\today}
 
\maketitle 

\section{Introduction}
\bigskip
A Fuchsian group is a group that acts properly discontinuously and cocompactly by isometries on the hyperbolic 
plane $\mathbb{H}^2$. If $G$ preserves the orientation of $\mathbb{H}^2$, then it is a discrete subgroup of 
$PSl_2(\R),$ and it has a presentation
\begin{equation}
\label{Fuchs-1}
G = \langle s_1, \dots, s_\ell, a_1, b_1, \dots, a_g, b_g  \mid s_1^{\gamma_1},\dots, 
s_\ell^{\gamma_\ell} , s_1 s_2 \dots s_\ell \overset{g}{\underset{ j = 1}{ \Pi}}  [a_j, b_j] \rangle\, ,
\end{equation}
with $\gamma_i \geq 2$ for all $i = 1, \ldots, \ell$. The isomorphism type of such a Fuchsian group is 
determined by the family of {\em characteristic exponents} $\gamma_i$ and by the {\em genus $g \geq 0$ of $G$}.
A presentation for Fuchsian groups with glide reflections is given below in (\ref{non-orientable-Fuchs}).

\smallskip

Fuchsian groups play a central role in both, hyperbolic geometry and in low-dimensional topology
(see \cite{Bea:GeomDesc}, \cite{ Kra:Book}, \cite{Th}). Most prominently, all prime 3-dimensional 
manifolds were shown by Thurston to  have a natural geometric structure, with eight possible 
geometries. For six of these geometries, the corresponding $3$-manifolds are Seifert fibered spaces.
Orientable  Seifert fibered spaces with an orientable base space have fundamental groups that are 
central extensions of Fuchsian groups $G$ as in (\ref{Fuchs-1}) above 
(see \cite{Scott:geom}). 

\smallskip

It is well known that the presence of characteristic exponents $\gamma_i = 2$ in (\ref{Fuchs-1})  
often creates serious problems, in the sense that otherwise well working arguments fail in this case. 
For example, if $g=0$ and all but one exponent satisfy $\gamma_i = 2$, then 
the rank of $G$ (= the minimal number of generators) can unexpectedly drop by 1, leading 
to an intriguing phenomenon in the corresponding Seifert fibered space (see \cite{BoiZie:HeegGen}, 
\cite{PeRoZie}). The most important achievement of this paper is that in our main result, Theorem
 ~\ref{main} stated below, exponents $\gamma_i = 2$ are not excluded, and even the 
difficult case where the number of such $\gamma_i$ is odd, is dealt with.

In order to simplify the presentation and concentrate on the crucial issues, we treat in the main body of this 
paper only the orientable case, and we assume $g = 0$. The general case, including the possibility of 
orientation reversing isometries of $\mathbb H^2$, can subsequently be deduced directly from this special case, 
see Section \ref{sec:Generalization}.

\smallskip

From now on let $G$ be a group with presentation
\begin{equation}\label{Fuchs}
G = \langle s_1, \dots, s_\ell \mid s_1^{\gamma_1},\dots, s_\ell^{\gamma_\ell} , s_1 s_2 
\dots s_\ell \rangle\, ,
\end{equation}
with $\gamma_i \geq 2$ for $i = 1, \ldots, \ell$, and with $\ell \geq 3$. Let $m$ denote the number 
of standard 
generators $s_i$ with exponent $\gamma_i \geq 3$, and let $n = \ell - m$ denote the number of 
those $s_i$ with exponent $\gamma_i = 2$. Note that  the indexing of the generators is immaterial 
for the isomorphism type of $G$, as any permutation  of the $s_i$ can be obtained through iteratively
replacing some $s_i$ by
\begin{equation}
\label{gernerator-permutation}
s'_i = s_{i-1}^{-1} s_i s_{i-1}\, \,, \,  \text{
which yields} \,\, \,\,s_{i-1} s' _i = s_i s_{i -1} \, .
\end{equation}

The group $G$ can be generated by $\ell - 1$ elements, and G. Rosenberger has shown in \cite{Ros89}, \cite{Ros92} 
(compare also \cite{Ros74}, \cite{Roes:Aut}, \cite{Zie}, \cite{Zie1}) that for $\ell \geq 4$ and $m \geq 3$
any minimal generating system  can be transformed by a sequence of elementary  {\em Nielsen operations} 
(see Definition \ref{Nielsen-equivalence-M}  below) into a generating system of the following type: 

\begin{defn} \label{def:scheme}
A   family ${\mathcal{U}}$ of elements in $G$ is called {\em a standard generating system} of $G$ if 
\begin{equation}\label{U}
{\mathcal{U}} = (s_1^{u_1}, \ldots, s_{j-1}^{u_{j-1}}, s_{j+1}^{u_{j+1}}, \ldots, s_\ell^{u_\ell})\, ,
\end{equation}
with $\gcd(u_i, \gamma_i) = 1$ for all $i \in \{1, \ldots, j-1, j+1, \ldots, \ell\}$.
\end{defn}

The main goal of this paper is to present a complete proof of the following:

\begin{thm} \label{main}
Let $G$ be a  group as in (\ref{Fuchs}), and let $m$ be the number of $\gamma_i \geq 3$. 
If the number $n = \ell - m$ of exponents $\gamma_i = 2$ is even, we assume that $m \geq 5$. 
If $n$ is odd, assume $m \geq 7$. 

Assume that $\,\mathcal{U}$ as in (\ref{U}) and 
$${\mathcal{V}} = (s_1^{v_1}, \ldots, s_{k-1}^{v_{k-1}}, s_{k+1}^{v_{k+1}}, \ldots, s_\ell^{v_\ell})$$
are two given standard generating systems of $G$. Define $u_j = v_k = 1$. 
Then $\mathcal{U}$ and $\mathcal{V}$ are Nielsen equivalent if and only if 
$$u_i = \pm v_i \quad \text{mod} \quad \gamma_i \qquad \text{for all} \qquad i = 1, \ldots, \ell\, .$$
\end{thm}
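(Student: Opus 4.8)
The plan is to prove the two implications by completely different techniques. The forward (``only if'') direction asserts that the residues $u_i \bmod \gamma_i$, taken up to sign, are invariants of the Nielsen equivalence class; the backward (``if'') direction is the explicit construction of Nielsen equivalences between any two standard systems carrying the same invariants. I expect the constructive direction, and in particular the case where $n$ is odd, to be the real source of difficulty, and this is precisely where the hypotheses $m \geq 5$ and $m \geq 7$ will be consumed.

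For the necessity direction I would attach to each standard generating system an index-specific invariant living in $\Z/\gamma_i$ modulo $\{\pm 1\}$, built from the Fox calculus. Concretely, given the epimorphism $\pi_{\mathcal U}\colon F_{\ell-1}\twoheadrightarrow G$ determined by $\mathcal U$, one computes the Jacobian matrix of Fox derivatives of the defining relators of $G$, pushes it into $\Z[G]$, and then into a quotient that isolates the $i$-th cyclic factor $\langle s_i\rangle\cong\Z/\gamma_i$; since $\gcd(u_i,\gamma_i)=1$, the entry $s_i^{u_i}$ is a generator of that factor and its exponent survives in the localized data. The key point is that this construction depends on $\pi_{\mathcal U}$ only through its Nielsen class: replacing $\mathcal U$ by a Nielsen-equivalent tuple precomposes $\pi_{\mathcal U}$ with an automorphism of $F_{\ell-1}$, which induces a $\Z[G]$-isomorphism of the relation module $(\ker\pi_{\mathcal U})^{\mathrm{ab}}$ and hence leaves the localized invariant unchanged, except for the unavoidable sign coming from the inversion moves and from the orientation of the cone point (this is why one recovers $u_i$ only up to $\pm 1$, and not up to an arbitrary unit of $\Z/\gamma_i$). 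Evaluating the invariant on $\mathcal U$ and on $\mathcal V$ then yields $u_i\equiv\pm v_i\bmod\gamma_i$ for every $i$, the convention $u_j=v_k=1$ feeding in the omitted indices. This step should go through uniformly, with no restriction on $m$ or on the parity of $n$.

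For the sufficiency direction I would first dispose of the cheap moves. Because $s_i^{\gamma_i}=1$, the exponent $u_i$ matters only modulo $\gamma_i$, and a single inversion Nielsen move sends $s_i^{u_i}\mapsto s_i^{-u_i}$; hence all sign changes allowed by the hypothesis $u_i\equiv\pm v_i$ are realized for free, and when $\mathcal U$ and $\mathcal V$ omit the same index the equivalence is immediate. The substantive content is the change of the omitted index: assuming (as the hypothesis at $i=k$ forces) that $u_k\equiv\pm 1\bmod\gamma_k$, I must produce an explicit sequence of multiplication, permutation and inversion moves transforming the standard system omitting $s_j$ into a standard system omitting $s_k$, using the long relation $s_1 s_2\cdots s_\ell=1$ to reintroduce $s_j$ and to retire $s_k^{\pm 1}$. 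Each such maneuver consumes a bounded number of auxiliary generators $s_i$ of order $\gamma_i\geq 3$, which is exactly what the hypothesis $m\geq 5$ provides; the order-$\geq 3$ condition is essential because the commutator and conjugation tricks that reshuffle the exponents fail, or only partially succeed, when performed with involutions.

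The main obstacle, as anticipated in the introduction, is the case in which the number $n$ of involutions is odd. Here the product of the involutions occurring in the long relation cannot be absorbed symmetrically, and a residual $\Z/2$-type discrepancy appears when one tries to close up the index-switching construction; correcting it requires two further order-$\geq 3$ generators to serve as spare parts, which is why the bound is raised from $m\geq 5$ to $m\geq 7$. The crux of the whole argument is thus a careful, essentially combinatorial bookkeeping of how the exponents and the omitted slot propagate through the explicit Nielsen sequences, verifying that one can always steer to the prescribed target $\mathcal V$ without leaving the class of standard generating systems and without altering any residue beyond the permitted sign. I would organize this bookkeeping by factoring the whole transformation through a convenient intermediate normal form in which $s_k$ occupies the omitted slot with exponent $1$; once $\mathcal U$ and $\mathcal V$ are both driven to this form, matching them reduces to the trivial same-omitted-index case, and the two directions combine to give the stated equivalence.
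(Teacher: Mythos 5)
Your allocation of the difficulty---and of the hypotheses---between the two directions is inverted, and this conceals a genuine gap. The ``if'' direction consumes none of the assumptions on $m$ and $n$: since $\gcd(v_i,\gamma_i)=1$, every $s_i^{\pm 1}$ is a power of $s_i^{v_i}$, so inversion moves account for all permitted sign changes, and when the omitted indices differ the long relation lets one replace $s_j^{\pm 1}$ inside $\cal V$ by $s_k=(s_{k-1}^{-1}\cdots s_1^{-1}s_\ell^{-1}\cdots s_{j+1}^{-1})\,s_j^{-1}\,(s_{j-1}^{-1}\cdots s_{k+1}^{-1})$ through elementary Nielsen moves, changing the omitted slot. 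This works for every $m$ and every parity of $n$; the ``residual $\Z/2$-type discrepancy'' you anticipate in this direction does not occur, and no spare order-$\geq 3$ generators are needed.

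The gap is in the ``only if'' direction, which you assert goes through ``uniformly, with no restriction on $m$ or on the parity of $n$.'' That assertion is false, not merely unproven: for special types the conclusion of the theorem itself fails---there exist Nielsen equivalent standard systems whose exponents are not related by $\pm 1$ (for instance for the groups of \cite{PeRoZie}; see Remark \ref{rem:Generalization}\,(4))---so any correct necessity argument must consume the hypotheses on $m$ and $n$ somewhere. Concretely, your sketch breaks at two points. First, there is no quotient of $G$ (or of $\Z G$ compatible with the group structure) that ``isolates the $i$-th cyclic factor $\langle s_i\rangle$'': killing the other standard generators kills $s_i$ as well, because of the long relation, so the retraction available in a free product of cyclics does not exist here. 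Second, and more fundamentally, the Fox Jacobian $\partial \cal V/\partial\cal U$ is well defined only up to correction matrices whose entries lie in the ideal generated by the images of the relator derivatives (Proposition \ref{Jacobian-M}); to extract an honest invariant of the Nielsen class one must evaluate through a ring homomorphism annihilating all of these, and constructing such an evaluation is exactly where the work and the hypotheses sit. The paper does this with a cyclic-faithful representation $G\to Sl_2(\C)$ combined with a quotient onto a cyclic group, landing in $Sl_2$ over a cyclic group ring, followed by a determinant computation and Proposition \ref{Lemma-1-7}. Every generator of order $2$ is forced to map to $-I_2$, so for odd $n$ the product relation fails by a sign; repairing this (Proposition \ref{faithful}, Cases (3) and (4)) requires partitioning the order-$\geq 3$ generators into two blocks each carrying a suitable representation, and that is precisely what forces $m\geq 7$ for odd $n$ (resp.\ $m\geq 5$ for even $n$). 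Without supplying an evaluation of this kind, your relation-module argument yields invariance only modulo the Fox ideal, which is not enough to recover $u_i$ up to sign.
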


\begin{remark} 
\label{rem:Generalization}
(1) From the proof presented in this paper it follows that the conclusion of Theorem \ref{main} 
is valid under weaker assumptions than those stated for $m$ and $n$:  It suffices that $G$ is 
``non-exceptional'' as in Definition \ref{exceptional} below (which is a bit too cumbersome 
to be stated here).

\smallskip
\noindent 
(2) A direct generalization of Theorem \ref{main} to orientable Fuchsian groups with genus $g \geq 1$ 
and to non-orientable Fuchsian groups is given below in Corollary \ref{main+}. The methods presented in the 
subsequent sections are actually stable enough to include an application to more general 1-relator quotients 
of free products of cyclic groups, see Theorem \ref{W-extension}.

\smallskip
\noindent 
(3) An extension of our methods beyond what is presented in this paper, in order to make the 
set of exceptional groups $G$ even smaller, is possible, but the technical effort becomes increasingly 
bigger if $m$ and the seize of the exponents $\gamma_i$ 
becomes smaller. 

\smallskip
\noindent (4) It is known that for very special choices of $m$ and $n$, the conclusion of Theorem \ref{main} 
fails (for instance take $G$ as in \cite{PeRoZie}). Other candidates for such a failure have been proposed for 
example by G. Rosenberger.  The precise determination of all Fuchsian groups with standard generating systems 
that admit more Nielsen equivalences than expected from our result above  remains  unresolved and  seems to 
be a very difficult problem.

\end{remark}

The study of Nielsen equivalence for generating systems of groups has a long history: It has been a central 
theme in combinatorial group theory since the 1950's,  for example in the context of non-tame automorphisms 
of groups (see e.g. \cite{MoSh}).  Even with Gromov's paradigm change towards geometric group theory 
in the 1990's, its relevance has not decreased (see e.g. \cite{DowdallTaylor}, \cite{HeusnerWeidemann}, 
\cite{KapovichWeidemann}, \cite{KapWeid04}, \cite{Louder}, \cite{MyNa}, \cite{Weidemann:GenSys} or, more 
classically,  \cite{PeRoZie}, \cite{Roes:Aut}, \cite{Zie1}). In fact, it has also spread into other branches of 
mathematics \cite{Bridson}, \cite{Islambouli},  \cite{Lub}, \cite{Souto}, as well as to computer science 
\cite{B-LG}, \cite{Wigd}.

Among the various natural reasons to investigate Nielsen equivalence of generating systems, 
one of the most important ones comes from the study of compact $3$-dimensional manifolds $M^3$: 
Every Heegaard splitting of $M^3$ determines two generating systems of $G = \pi_1(M^3)$ up to Nielsen 
equivalence, and  an isotopy of the splitting preserves the Nielsen equivalence classes. Indeed, the latter 
are the most telling and also most useful invariants  of such splittings, and in the majority of cases non-isotopic 
Heegaard splittings are distinguished by these invariants.

The authors of this paper have in previous work (see \cite{Lus:NieEquivSim}, \cite{LuMo:NEinFuch} and 
\cite{LuMoGenSysTor}) developed the fundamentals of the method used here, and set up a K-theoretic invariant
 $\cal N(G)$ to distinguish minimal generating systems in arbitrary groups (see Remark \ref{N-torsion-question} 
below). This has led, by work of the second author with J. Schultens (see \cite{MorSch:VertHorHS}
and \cite{Sch:HSwithBounday}), to a classification of minimal genus Heegaard splittings in a large class of 
Seifert fibered spaces, excluding, however, those where the underlying Fuchsian groups 
have characteristic exponents $\gamma_i = 2$.

In the present paper, instead of employing the powerful $\cal N(G)$ machinery, we only 
need ``Jacobian matrices'' defined via Fox derivatives over $\Z G$ (see Section 2), as well 
as a special evaluation technique, via ``cyclic-faithful'' representations of $G$ in $Sl_2(\C)$ 
(see Sections 3 and 4). Our final calculations take place in $2 \times 2$-matrices over a group 
ring of a cyclic group with coefficients in $\C$ (see Section 5), and various cases have to be 
considered that stretch over a number of pages (Sections 6 and 7).

This paper is in many ways a continuation of our previous work \cite{Lus:NieEquivSim}, \cite{LuMo:NEinFuch} and  
\cite{LuMoGenSysTor}. For the convenience of the reader, however, we present here a self-contained exposition, 
and we also make a special effort to organize the (non-trivial) computational parts of the paper into ``compartments'' 
where they can be checked independently from the presentation of our main arguments.

We'd also like to point the reader's attention to recent work \cite{Dutra1} of Edison Dutra, as well as to the upcoming 
papers \cite{Dutra2} by Dutra and \cite{Dutra-W} by Dutra-Weidmann on related questions.

\medskip

\noindent {\em Acknowledgements:}
The authors would like to thank the referee for several very valuable comments which helped to improve 
and correct an earlier version of this paper.  Furthermore, we would like to thank Gerhard Rosenberger for 
straightening out some of our more classical references. We would also like to thank Wendy Sandler as 
well as David Kohel for advice about some intricacies of the English language.

%%%%%%%%%%%%%%%%%%%
%%%%%%%%%%%%%%%%%%%

\section{Preliminaries} \label{prelims}
\bigskip
In this section we  briefly review the notions of Fox derivatives and Nielsen equivalence.

\subsection{Fox derivatives}

The notion of Fox derivatives was developed by R. Fox in \cite{Fox:DiffCal}. For a 
modern exposition see \cite{BuZie:Knots}. 

\begin{defn}\label{Fox-derivative-M}
Let $X = (X_1, \ldots , X_n)$ be a basis of a free group $F_n$. Then the {\em $i$-th Fox derivative 
with respect to $X$} is a $\mathbb{Z}$-linear  map 
 $$\partial / \partial X_i : \Z F_n \to \Z F_n\, ,\, \,W \, \mapsto \,\,\partial W/\partial X_i \, ,$$
which satisfies (where $\delta_{i,j}$ denotes the Kronecker-delta)
\begin{enumerate}
\item   $ \partial X_j / \partial X_i = \delta_{i,j}$ for any $j \in \{1, \ldots, n\}$, and
\vskip5pt
\item $\partial (U\cdot V) / \partial X_i  = \partial U / \partial X_i  + U\cdot \partial V / \partial X_i$ 
for any $U, V \in F_n\,$.
\end{enumerate}

The maps $\partial/ \partial X_i$ are characterized by these two properties and the assumed $\Z$-linearity.
Note that the notation $\partial/\partial X_i$ is slightly misleading, as the map $\partial/\partial X_i$ depends 
not only on $X_i$, but also on the choice of the other $X_j$ from the given basis $X$.
\end{defn}

Fox derivatives have many natural uses in algebra and topology, and they turn out to be fairly easy to handle. 
For example, for any $i \in \{1, \ldots, n\}$ one can immediately derive from (1) and (2) above the following facts: 
For the neutral element $1 \in F_n$  one has $\partial 1 / \partial X_i = 0$, and  for any $W \in F(X)$ 
the formula 
\begin{equation}
\label{inverse-formula}
\partial W^{-1} / \partial X_i = -W^{-1} \cdot \partial W/\partial X_i \, . 
\end{equation}
Furthermore, for any $V, W \in F_n$ the equality
\begin{equation}
\label{inverses-M}
\partial (W V W^{-1}) / \partial X_i = W\, \partial V/\partial X_i + (1 - W V W^{-1}) \, \partial W / \partial X_i \,.
\end{equation}
is satisfied.

\smallskip

Let $Y = (Y_1, \ldots , Y_m)$ be a second basis of $F_n$. Then for any element $W \in F_n$ we 
have the {\em chain rule}: 
\begin{equation}\label{ChainRule}
\partial W/ \partial X_i = \sum_{k = 1}^{n}(\partial W /\partial Y_k   \cdot \partial Y_k / \partial X_i)
\end{equation}
Hence the $n$-tuple $(\partial W /\partial X_i)_{i= 1, \ldots, n}$ is the matrix product of the 
$(1 \times n)$-matrix $(\partial W /\partial Y_k)_{k = 1, \ldots, n}$ with the {\em Jacobian matrix} 
\begin{equation}\label{Jacobian} 
\partial Y/ \partial X = (\partial Y_k/ \partial X_i )_{k, i = 1, \ldots, n}
\end{equation}
over the group ring $\mathbb{Z}F_n$. This  matrix is invertible over $\mathbb{Z}F_n$: 
From property (1) in Definition \ref{Fox-derivative-M} and a direct application of the chain rule  
one obtains 
$\partial X/ \partial Y \cdot \partial Y/ \partial X = \partial X / \partial X = I_n$
(where $I_n$ denotes the $n\times n$  identity matrix).

\medskip

Let $\cal U = (x_1, \ldots, x_n)$ a generating system for a group $G$. Then for the 
free group $F(X)$ over a family $X = (X_1, \ldots, X_n)$ of formal symbols $X_i$ there 
is a canonical surjection
\begin{equation}
\label{canonical-surjection}
p_\cal U: F(X) \twoheadrightarrow G , \, X_i \mapsto x_i \,.
\end{equation}
Any element $w \in G$ can be written as a ``word'' in $\cal U$, i.e. 
\begin{equation} \label{word-M}
w = \omega_1 \omega_2  \ldots \omega_r \quad \text{with} \quad \omega_i 
\in \{x_1^{\pm 1}, \ldots, x_r^{\pm 1}\}. 
\end{equation}
Lifting each $x_i$ to $X_i$ determines an element $W \in F(X)$ so that 
$$p_\cal U(W) = w\, .$$
The $n$ Fox derivatives $\partial W/ \partial X_i$, when mapped into $\Z G$ via the ring 
homomorphisms $\Z F(X) \to \Z G$ induced by $p_\cal U$ (and hence also denoted by 
$p_\cal U$), give rise to an $n$-tuple
\begin{equation}
\label{Fox-images-M}
\partial w/\partial \cal U = (p_\cal U(\partial W/\partial X_1), \ldots, p_\cal U(\partial W/\partial X_n))
\end{equation}
Any other word $w^* = \omega ^*_1 \ldots \omega ^*_{r^*}$ as in (\ref{word-M}), which describes the 
same element
$$w = w^* \,\,\, \text{in} \,\,\,  G\, ,$$
gives rise to a second lift $W^* \in F(X)$, which differs from $W$ by an element 
$R = W^* W^{-1} \in \ker p_\cal U$. If furthermore $\ker p_\cal U$ is normally generated by the 
elements of a set $\cal R = \{R_1, \ldots, R_m\}$, we have
$$W^* = (V_1 R_{j_1}^{\epsilon_1} V_1^{-1} \ldots V_q R_{j_q}^{\epsilon_q} V_q^{-1}) W$$
for suitable $V_h \in F(X)$,  $R_{j_h} \in \cal R$ and $\epsilon_h = \pm 1$. Hence we derive, 
from property (2) of Definition \ref{Fox-derivative-M} and from formula (\ref{inverses-M}), that
\begin{equation}\label{sum-expression-M}
p_\cal U(\partial W^*/\partial X_i) = p_\cal U\left(\sum_{j = 1}^{m} \big{(}\sum_{\{h \, \mid \, j_h = j\} }  
\epsilon_h V_h \big{)}  \cdot \partial R_j /\partial X_i \right) + p_\cal U\left(\partial W/ \partial X_i\right)
\end{equation}
for any $i \in \{1, \ldots, n\}$. As a consequence, we obtain from (\ref{Fox-images-M}) that 
\begin{equation}
\label{1-line}
\partial w^* / \partial \cal U = \partial w / \partial \cal U + L \, ,
\end{equation}
where each entry of the $n$-tuple $L$ is of the same type as the first term in the sum on the right hand 
side of equality (\ref{sum-expression-M}). We formalize this observation as follows:

\begin{defn-rem} \label{correction-matrix-M}
For any group $G$ and any generating system $\,\cal U = (x_1, \ldots, x_n)$ of $G$ consider 
the canonical surjection 
$$p_\cal U: F(X_1, \ldots, X_n) \twoheadrightarrow G, \, \, X_i \mapsto x_i \, .$$

\smallskip
\noindent (1) A matrix $B \in M_n(\Z G)$ is called a {\em correction matrix} if every coefficient 
of $B$ is contained in the left ideal $I_\cal U^{\ell}$ of $\Z G$ which is generated by the Fox 
derivative images  $p_\cal U(\partial R / \partial X_i)$, for any $R \in \ker p_\cal U$ and $X_i$ 
from $X = (X_1, \ldots, X_n)$.

\smallskip
\noindent (2) If $\cal R = \{R_1, \ldots, R_m\}$ is a set of normal generators of $\ker p_\cal U$, 
then $I_\cal U^{\ell}$ is generated as a left ideal in $\Z G$  by all $p_\cal U(\partial R_j / \partial X_i)$ 
with $R_j \in \cal R$ and $X_i$ from $X$. This  is the content of (\ref{sum-expression-M}), for the 
case $W = 1 \in F(X)$.
\end{defn-rem}

 Now (\ref{1-line}) implies directly:

\begin{prop}\label{Jacobian-M}
Let $\,\cal U = (x_1, \ldots, x_n)$ be a generating system of a group $G$. 
Consider a second generating system $(y_1, \ldots, y_n)$ of $G$, and assume that 
each $y_k$ is expressed as a word $w_k$ in $\cal U$.  
Then the collection $\cal W = (w_1, \ldots, w_n)$ determines a ``Jacobian matrix'' 
$\partial \cal W / \partial \cal U = (a_{k,i})_{k, i} \in M_n (\Z G)$,  
where $a_{k, i} = p_\cal U(\partial w_k/ \partial X_i)$ 
(so that the $k$-th  line of $\partial \cal W / \partial \cal U$ is defined as in 
(\ref{Fox-images-M}), with $w_k$ replacing $w$).

Let $\cal W^*$ be a second  collection of such words $w^*_k$ for each $y_k$. Then there is a correction 
matrix $B$ such that the two Jacobian  matrices associated to $\cal W$ and $\cal W^*$ satisfy:
$$\partial \cal W^* / \partial \cal U = \partial \cal W / \partial \cal U + B$$
\qed
\end{prop}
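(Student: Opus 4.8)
The plan is to recognize that this Proposition is simply the row-by-row, matrix-level reformulation of the single-element identity (\ref{1-line}), which has already been established in the preceding discussion. Consequently almost no new computation is required: the work lies entirely in the bookkeeping that assembles the rows into a matrix and in verifying the ideal-membership condition of Definition-Remark \ref{correction-matrix-M}.

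First I would fix an index $k \in \{1, \ldots, n\}$ and apply the argument culminating in (\ref{1-line}) to the single element $y_k \in G$, which plays the role of $w$. Here $w_k$ and $w_k^*$ play the roles of the two word representations: both are words in $\cal U$ describing the same element $y_k$, so their lifts to $F(X)$ differ by an element of $\ker p_\cal U$. Since, by the definition of the Jacobian matrices, the $k$-th rows of $\partial \cal W / \partial \cal U$ and of $\partial \cal W^* / \partial \cal U$ are precisely $\partial y_k / \partial \cal U$ computed through the word $w_k$ and through the word $w_k^*$ respectively, the identity (\ref{1-line}) yields
$$(k\text{-th row of } \partial \cal W^*/\partial \cal U) = (k\text{-th row of } \partial \cal W/\partial \cal U) + L_k$$
for an $n$-tuple $L_k$ each of whose entries is of the form appearing in the first summand on the right-hand side of (\ref{sum-expression-M}).

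Next I would define $B \in M_n(\Z G)$ to be the matrix whose $k$-th row is $L_k$, for every $k \in \{1,\ldots,n\}$. Stacking the $n$ row-identities then gives exactly $\partial \cal W^*/\partial \cal U = \partial \cal W/\partial \cal U + B$, which is the asserted equation. It remains only to check that $B$ is a correction matrix in the sense of Definition-Remark \ref{correction-matrix-M}(1), i.e. that every coefficient of $B$ lies in the left ideal $I_\cal U^\ell$. But a typical entry of $L_k$ is $p_\cal U\big(\sum_j (\sum_h \epsilon_h V_h)\cdot \partial R_j/\partial X_i\big)$, which is a $\Z G$-linear combination of the Fox-derivative images $p_\cal U(\partial R_j/\partial X_i)$ with $R_j \in \ker p_\cal U$; since these images generate $I_\cal U^\ell$ as a left ideal, every such entry indeed lies in $I_\cal U^\ell$.

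Since the heavy lifting was already done in deriving (\ref{1-line}), there is no genuine obstacle here. The only point demanding minor care is notational rather than mathematical: the passage from the element-level statement, where one element $w$ carries two representatives $W, W^*$, to the matrix-level statement, where each of the $n$ target generators $y_k$ is represented by the two words $w_k, w_k^*$. One must therefore apply (\ref{1-line}) independently in each of the $n$ rows and then reassemble the resulting correction tuples $L_k$ into the single matrix $B$, taking care that the left-ideal condition is a per-entry statement and so survives this assembly unchanged.
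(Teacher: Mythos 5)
Your proposal is correct and follows exactly the route the paper intends: the paper dispatches Proposition \ref{Jacobian-M} with the single remark that (\ref{1-line}) ``implies directly,'' and your row-by-row application of (\ref{1-line}) followed by assembling the correction tuples $L_k$ into the matrix $B$ (with the per-entry check against Definition-Remark \ref{correction-matrix-M}) is precisely the bookkeeping that remark leaves implicit.
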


Note that, contrary to $\partial Y/\partial X$ in (\ref{Jacobian}), the more general Jacobian 
matrix $\partial \cal W / \partial \cal U$ in the above proposition is in general {\em not} invertible over $\Z G$. 
An example is given by the generator $y = x^2$ of the cyclic group $G = \langle x \mid x^5 \rangle$.
For $\cal U = (x)$ and $\cal W = (x^2)$ the matrix $\partial \cal W / \partial \cal U$  is the $(1 \times 1)$-matrix 
with coefficient  $1+x$, which is not invertible in $\Z G$.

%%%%%%%%%%%%%%%

%%%%%%%%%%%%%%
\subsection{Nielsen equivalence} 

${}^{}$

\begin{defn} \label{Nielsen-equivalence-M}
 Let $G$ be a group, let $n \in \N$, and let $ \cal U = (x_1, \ldots, x_n)$ be an $n$-tuple of 
elements from $G$. Then an {\em elementary Nielsen operation} on $\cal U$ is given by 
one of the following:
\begin{enumerate}
\item a permutation of the $x_i$,
\item replace $x_i$ by $x_i x_j$ or by $x_j x_i$, for $j \neq i$, while all other members of $\cal U$ 
stay unchanged, or
\item replace $x_i$ by $x_i^{-1}$, while all other members of $\cal U$ stay unchanged.
\end{enumerate}
A finite sequence of elementary Nielsen operations is sometimes called a {\em Nielsen operation}, 
and two families $\cal U$ and $\cal U'$ are {\em Nielsen equivalent} if they can be derived from 
each other by Nielsen operations.
\end{defn}

\begin{rem}\label{quotient-Nielsen}
Let $f: G \to H$ be a group homomorphism, let $\cal U$ and $\cal U'$ be families of elements in $G$, 
and denote by $f(\cal U)$ and $f(\cal U')$ the families of their $f$-images in $H$. If $\cal U$ and $\cal U'$ 
are Nielsen equivalent, then so are $f(\cal U)$ and $f(\cal U')$. This is an immediate consequence of 
Definition \ref{Nielsen-equivalence-M}.
\end{rem}

Nielsen operations have been introduced by J. Nielsen in the 20's of the last century, as analogues 
of elementary row operations on integer matrices. He could then show that bases for a free group 
$F_n$ have the  property described in the following theorem, in analogy to what is well known for 
bases of free abelian groups $\Z^n$:

\begin{thm}
\label{Nielsen-M}
Let $X = (X_1, \ldots, X_n)$ and $Y = (Y_1, \ldots, Y_n)$ be two bases of a free group $F_n$. 
Then there exists a finite sequence of elementary Nielsen operations that transform $X$ into $Y$.

Conversely, if $X$ is a basis of $F_n$ and $Y$ derives from $X$ by a finite sequence of Nielsen 
operations, then $Y$ is also a basis of $F_n$.
\qed
\end{thm}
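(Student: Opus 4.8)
The plan is to treat the two directions separately, beginning with the easier converse. Every elementary Nielsen operation is invertible by a short sequence of elementary Nielsen operations: a permutation and an inversion are clearly reversible, and the replacement $X_i \mapsto X_i X_j$ is undone by first inverting $X_j$, then multiplying $X_i X_j$ by the new entry, and finally inverting $X_j$ back (and symmetrically for $X_j X_i$). Consequently ``Nielsen equivalent'' is a symmetric relation, a fact I will reuse below. Moreover, each elementary operation sends a basis to a tuple that generates the same subgroup $F_n$ and has the same cardinality $n$; writing $\phi$ for the endomorphism of $F_n$ carrying the old tuple to the new one and $\psi$ for the one induced by the inverse operation, the relations $\phi\psi=\psi\phi=\mathrm{id}$ (checked on generators) show $\phi$ is an automorphism. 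Since automorphisms carry bases to bases, iterating proves that any $Y$ obtained from the basis $X$ by Nielsen operations is again a basis.

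For the forward direction I would use Nielsen's reduction technique. Measure each tuple $(Y_1,\dots,Y_n)$ by its total length $\sum_{i=1}^{n} |Y_i|$, where $|W|$ denotes the length of the reduced word representing $W\in F_n$ with respect to the fixed basis $X$. Call a tuple \emph{Nielsen reduced} if no elementary Nielsen operation strictly decreases this total length, which is equivalent to the usual Nielsen cancellation conditions for the family $\{Y_i^{\pm 1}\}$. The first key step is the \emph{reduction lemma}: any tuple can be transformed by finitely many elementary Nielsen operations into a Nielsen reduced tuple. This terminates because the total length takes values in the non-negative integers; the content is to verify that whenever the cancellation conditions fail, some operation of type (2), possibly preceded by an inversion, strictly decreases the total length.

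The second key step is the \emph{length estimate} for a Nielsen reduced tuple $Y'=(Y'_1,\dots,Y'_n)$: for any reduced product $w = v_1 v_2 \cdots v_r$ with each $v_s\in\{(Y'_1)^{\pm 1},\dots,(Y'_n)^{\pm 1}\}$ and $v_s v_{s+1}\neq 1$, one has $|w|\geq r$. In particular a Nielsen reduced tuple freely generates its subgroup, and any element of that subgroup of length $1$ in $F_n$ is a single factor $v_1$. I then apply this to a basis: reduce $Y$ to a Nielsen reduced tuple $Y'$ by Nielsen operations. Since $Y'$ still generates $F_n$, each standard generator $X_i$ can be written as a reduced product $v_1\cdots v_r$ in the $(Y'_j)^{\pm 1}$; the estimate gives $1=|X_i|\geq r$, forcing $r=1$, so $X_i=(Y'_j)^{\pm 1}$ for some $j$. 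Thus the $n$ distinct elements $X_1,\dots,X_n$ lie in $\{(Y'_1)^{\pm 1},\dots,(Y'_n)^{\pm 1}\}$, and since both families have exactly $n$ members up to sign, a counting argument shows $Y'$ coincides with $X$ up to a permutation and some inversions. Hence $Y'$ is obtained from $X$ by operations of types (1) and (3). Combining the passage $X\to Y'$ with the reverse of $Y\to Y'$, which is legitimate by the symmetry established above, yields a sequence of Nielsen operations carrying $X$ to $Y$.

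The main obstacle is the reduction lemma together with the length estimate: these constitute the genuinely combinatorial core of Nielsen's theorem and require a careful case analysis of how the reduced words $Y_i$ overlap and cancel. Once they are in place, the passage to bases and the reduction to the standard basis via the rank-$n$ counting argument are short.
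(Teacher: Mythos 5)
The paper itself offers no proof of this theorem: it is stated with a \qed as Nielsen's classical result, so your attempt can only be measured against the standard literature argument (Nielsen reduction, as in Lyndon--Schupp or Magnus--Karrass--Solitar) --- which is exactly the architecture you chose. Your converse direction (invertibility of elementary operations, hence the induced endomorphism is an automorphism and carries bases to bases) is correct, and the endgame of the forward direction (the length estimate forces each $X_i$ to equal some $(Y'_j)^{\pm 1}$, then a counting argument and symmetry finish the proof) is also the standard, correct completion.

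There is, however, a genuine flaw at the one place you chose to compress: the claim that ``no elementary Nielsen operation strictly decreases the total length'' is \emph{equivalent} to the usual Nielsen cancellation conditions, together with the assertion that whenever the cancellation conditions fail some operation strictly decreases total length. This is false, and it is precisely the known subtlety of Nielsen's method. The cancellation conditions comprise not only (N1) (no more than half of a factor cancels in a product of two) but also (N2) (in a product of three factors the middle one is never completely absorbed), and (N2) can fail for a tuple on which no elementary operation strictly decreases total length. Concretely, in $F_3 = F(a,b,c)$ take $u_1 = ab$, $u_2 = b^{-1}c$, $u_3 = c^{-1}a$: every product $u_i^{\varepsilon}u_j^{\delta}$ with $i \neq j$ that is not trivially reducible has length at least $2$, so every elementary operation keeps the total length at least $6$ and the tuple is ``reduced'' in your sense; yet $u_1 u_2 u_3 = a^2$ is a reduced product of $r = 3$ factors of length $|a^2| = 2 < 3$, so your length estimate $|w| \geq r$ fails for it. Thus with your definition the key estimate is false, and with the usual definition ((N0)--(N2)) your termination argument for the reduction lemma (``total length strictly decreases'') does not suffice. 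The standard repair is to first achieve (N0) and (N1) by length-decreasing operations, and then remove violations of (N2) by \emph{length-preserving} operations whose termination is guaranteed by a secondary lexicographic well-ordering on the left and right halves of the words. (One could also hope that for $n$-tuples generating $F_n$ local length-minimality already forces the conclusion, but you neither state nor prove such a claim, and the counterexample above shows it cannot be obtained from (N1) alone.) Once the reduction lemma and the estimate are established in their correct forms, the rest of your proof goes through as written.
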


Contrary to rings like $\Z$ or $\R[X]$, for non-commutative groups $G$ 
the units (= multiplicatively invertible elements) in $\Z G$ may in general be 
quite complicated.  However, within the multiplicative group of units in $\Z G$ 
there is always the subgroup of {\em trivial units}, given by
$$T_G = \{ \pm g \mid g \in G \} \, .$$

\begin{defn}
\label{gen-elem-matrix}
For any group $G$ we say that a square matrix $M$ with entries in $\Z G$ is called 
a {\em generalized elementary matrix} over $\Z G$. if $M$ satisfies one of the following:
\begin{enumerate}
\item
$M$ is a permutation matrix,
\item
$M$ differs from the identity matrix only in a single off-diagonal coefficient, or
\item
$M$ is a diagonal matrix with trivial units on the diagonal.
\end{enumerate}
We will refer to these matrices as {\em elementary $\Z G$-matrices}
\end{defn}

\begin{prop}
\label{decomposition-M}
Let $X = (X_1, \ldots, X_n)$ and $Y = (Y_1, \ldots, Y_n)$ be two bases of a free group $F_n$.
Then the Jacobian matrix
$$\partial Y / \partial X = (\partial Y_j / \partial X_i)_{j, i}$$
is a product of elementary $\Z F_n$-matrices.
\end{prop}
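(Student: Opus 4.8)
The plan is to combine Nielsen's theorem (Theorem~\ref{Nielsen-M}) with the multiplicativity of Jacobian matrices coming from the chain rule~(\ref{ChainRule}), reducing the statement to the Jacobian of a \emph{single} elementary Nielsen operation. By Theorem~\ref{Nielsen-M} there is a finite sequence of elementary Nielsen operations carrying $X$ to $Y$; recording the intermediate tuples gives $X = X^{(0)}, X^{(1)}, \ldots, X^{(r)} = Y$, where each $X^{(t)}$ is again a basis (by the converse part of Theorem~\ref{Nielsen-M}, so that every Jacobian below is defined), and $X^{(t+1)}$ arises from $X^{(t)}$ by one elementary operation. Applying~(\ref{ChainRule}) to the entries $Y_j$ relative to the basis $X^{(r-1)}$ and iterating, I would obtain
$$\partial Y/\partial X = \partial X^{(r)}/\partial X^{(r-1)}\cdot \partial X^{(r-1)}/\partial X^{(r-2)} \cdots \partial X^{(1)}/\partial X^{(0)}\, .$$
Thus it suffices to show that the Jacobian of each single elementary Nielsen operation is a product of elementary $\Z F_n$-matrices.

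Next I would compute these Jacobians directly from Definition~\ref{Fox-derivative-M}. Write $Z = X^{(t)}$ and $Z' = X^{(t+1)}$. If $Z'$ is a permutation of $Z$, say $Z'_j = Z_{\sigma(j)}$, then property~(1) gives $\partial Z'_j/\partial Z_i = \delta_{\sigma(j),i}$, so $\partial Z'/\partial Z$ is the corresponding permutation matrix (type~(1)). If $Z'$ replaces $Z_{i_0}$ by $Z_{i_0}^{-1}$ and leaves the other entries fixed, then~(\ref{inverse-formula}) yields $\partial Z'_{i_0}/\partial Z_i = -Z_{i_0}^{-1}\,\delta_{i_0,i}$, while every other row is a standard basis row; hence $\partial Z'/\partial Z$ is diagonal with the trivial unit $-Z_{i_0}^{-1}$ in the $(i_0,i_0)$ slot and $1$ elsewhere (type~(3)). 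If $Z'$ replaces $Z_{i_0}$ by $Z_{i_0}Z_{j_0}$ with $j_0\neq i_0$, the product rule~(2) gives $\partial Z'_{i_0}/\partial Z_i = \delta_{i_0,i} + Z_{i_0}\,\delta_{j_0,i}$, so the matrix agrees with $I_n$ except for the single off-diagonal coefficient $Z_{i_0}$ in position $(i_0,j_0)$ (type~(2)).

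The one case not immediately of elementary form is the left-multiplication move $Z_{i_0}\mapsto Z_{j_0}Z_{i_0}$, which I expect to be the only genuine (if small) obstacle. Here the product rule gives $\partial Z'_{i_0}/\partial Z_i = Z_{j_0}\,\delta_{i_0,i} + \delta_{j_0,i}$, so row $i_0$ carries the trivial unit $Z_{j_0}$ on the diagonal \emph{and} a $1$ in the off-diagonal position $(i_0,j_0)$; this differs from $I_n$ in two places and so is not elementary on the nose. The resolution is to factor it as
$$\partial Z'/\partial Z = D_{i_0}(Z_{j_0})\cdot T_{i_0,j_0}(Z_{j_0}^{-1})\, ,$$
where $D_{i_0}(Z_{j_0})$ is the diagonal matrix with the trivial unit $Z_{j_0}$ in position $(i_0,i_0)$ and $1$ elsewhere (type~(3)), and $T_{i_0,j_0}(Z_{j_0}^{-1})$ is $I_n$ modified only by the off-diagonal entry $Z_{j_0}^{-1}$ in position $(i_0,j_0)$ (type~(2)); multiplying the two reproduces exactly the row $\big(Z_{j_0}\ \text{at}\ i_0,\ 1\ \text{at}\ j_0\big)$. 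Together the four cases show that every single-operation Jacobian is a product of one or two elementary $\Z F_n$-matrices, and substituting this into the factorization of the first paragraph completes the proof.
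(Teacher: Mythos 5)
Your proof is correct and follows essentially the same route as the paper's (very brief) argument: Theorem~\ref{Nielsen-M} plus the chain rule~(\ref{ChainRule}) reduce everything to the Jacobian of a single elementary Nielsen operation, which is then computed from Definition~\ref{Fox-derivative-M}. You have merely filled in the details the paper leaves to the reader, including the one genuinely non-obvious point --- that the left-multiplication move $Z_{i_0}\mapsto Z_{j_0}Z_{i_0}$ yields a matrix that is not elementary on the nose but factors as a product of a type~(3) and a type~(2) elementary $\Z F_n$-matrix --- and your factorization there is verified correct.
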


\begin{proof}
If $Y$ is derived from $X$ by a single elementary Nielsen operation, the claimed statement follows 
from a direct computation based on (1) and (2) in Definition \ref{Fox-derivative-M}.
The full claim is thus an immediate consequence of Theorem \ref{Nielsen-M} 
and the fact that Fox derivatives satisfy the chain rule, see (\ref{ChainRule}).
\end{proof}

 Combining Proposition \ref{decomposition-M} with Proposition \ref{Jacobian-M} gives immediately  
the main criterion used in this paper to detect Nielsen inequivalent generating systems in 
an arbitrary finitely generated group $G$:

\begin{prop}
\label{criterion-M}
Let $\cal U = (x_1, \ldots, x_n)$ and $\cal V = (y_1, \ldots, y_n)$ be two Nielsen equivalent generating 
systems of a group $G$. For any family of expressions 
$$y_1 = w_1\, ,\, \ldots \,, \,\,y_n = w_n$$
of the $y_k$ as words $w_k$ in the generators $x_i$, and their canonical lifts $W_k \in F(X)$ under 
the surjection $p_\cal U: F(X) \to G, \, X_i \mapsto x_i$, consider the Jacobian matrix 
$\partial \cal V / \partial \cal U = (p_\cal U (\partial W_k / \partial X_i))_{k, i}$. 
Then there is a correction matrix $B \in {\mathbb M}_n(\Z G)$ as in Definition \ref{correction-matrix-M} 
such that the sum
$$\partial \cal V / \partial \cal U + B$$ 
is a product of elementary $\Z G$-matrices.
\qed
\end{prop}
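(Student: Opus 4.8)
The plan is to reduce the statement to the free-group case already handled by Proposition~\ref{decomposition-M}, by lifting the Nielsen equivalence between $\cal U$ and $\cal V$ to an honest change of basis in $F(X)$, and then pushing the resulting factorization forward along $p_\cal U$. The only discrepancy between the matrix obtained this way and the matrix $\partial \cal V/\partial \cal U$ appearing in the statement comes from the freedom in choosing the words $w_k$, and this discrepancy is exactly what a correction matrix absorbs, via Proposition~\ref{Jacobian-M}.

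First I would use the hypothesis that $\cal U$ and $\cal V$ are Nielsen equivalent to fix a finite sequence of elementary Nielsen operations carrying $\cal U = (x_1, \ldots, x_n)$ to $\cal V = (y_1, \ldots, y_n)$ in $G$. Since an elementary Nielsen operation (Definition~\ref{Nielsen-equivalence-M}) is a purely formal recipe — permute, multiply two entries, or invert an entry — applying the very same sequence of operations to the basis $X = (X_1, \ldots, X_n)$ of $F(X)$ produces an $n$-tuple $Y = (Y_1, \ldots, Y_n)$ in $F(X)$. By Theorem~\ref{Nielsen-M}, $Y$ is again a basis of $F(X)$. Moreover, since $p_\cal U : F(X) \to G$ is a homomorphism it commutes with the group operations used in each elementary Nielsen operation, so $p_\cal U(Y_k) = y_k$ for every $k$; that is, $(Y_1, \ldots, Y_n)$ is one distinguished family of words expressing the $y_k$ in $\cal U$.

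Next, Proposition~\ref{decomposition-M} writes the Jacobian $\partial Y/\partial X$ as a product of elementary $\Z F(X)$-matrices. The key observation is that applying $p_\cal U$ entrywise sends this product to a product of elementary $\Z G$-matrices: $p_\cal U$ is a ring homomorphism $\Z F(X) \to \Z G$, so the induced map on $M_n$ respects matrix multiplication, and it carries each of the three types in Definition~\ref{gen-elem-matrix} to its counterpart — permutation matrices to permutation matrices, a single off-diagonal entry to a single off-diagonal entry, and a diagonal trivial unit $\pm g$ to $\pm p_\cal U(g)$, which is again a trivial unit of $\Z G$. Hence $p_\cal U(\partial Y/\partial X)$, whose $(k,i)$-entry is $p_\cal U(\partial Y_k/\partial X_i)$, is a product of elementary $\Z G$-matrices.

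Finally, $Y_k$ and the chosen word $W_k$ are two expressions of the same element $y_k \in G$, so Proposition~\ref{Jacobian-M} supplies a correction matrix with $p_\cal U(\partial Y/\partial X) = \partial \cal V/\partial \cal U + B$ (the set of correction matrices being closed under negation, the sign is immaterial). Therefore $\partial \cal V/\partial \cal U + B = p_\cal U(\partial Y/\partial X)$ is a product of elementary $\Z G$-matrices, as required. The proof is essentially a bookkeeping combination of the two cited propositions; the only point that deserves genuine care — and which I expect to be the main obstacle to formulate cleanly rather than to prove — is the verification in the preceding paragraph that $p_\cal U$ preserves each type of generalized elementary matrix, in particular that trivial units map to trivial units, since that is precisely what guarantees the pushed-forward factorization stays within the elementary $\Z G$-matrices rather than merely within invertible ones.
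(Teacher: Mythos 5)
Your proof is correct and follows exactly the route the paper intends: the paper states Proposition~\ref{criterion-M} as an immediate combination of Propositions~\ref{decomposition-M} and~\ref{Jacobian-M}, and your argument — lifting the Nielsen operations to a basis $Y$ of $F(X)$ with $p_\cal U(Y_k) = y_k$, pushing the elementary factorization of $\partial Y/\partial X$ forward along the ring homomorphism $p_\cal U$, and absorbing the discrepancy with the chosen lifts $W_k$ into a correction matrix — is precisely the bookkeeping the paper leaves implicit.
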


This proposition is particularly useful in combination with a suitable homomorphism of the group ring 
$\Z G$ into a matrix ring. For instance, let $\zeta_5$ be a 5-th root of unity, and for the cyclic group 
$G = \langle x \mid x^5 \rangle$ consider the homomorphism $\eta: \Z G \to \C$ given by $x \mapsto \zeta_5$. 
Then one has $\partial x^5 / \partial x = 1 + x + \ldots + x^4$, which gives 
$\eta(\partial x^5 / \partial x) = 1 + \zeta_5 + \ldots + \zeta_5^4 = 0 \in \C$. 

As a consequence, any correction matrix $B$ is mapped by $\eta$ to the zero-matrix, so that the image 
$\eta(\partial \cal V/ \partial \cal U)$ of the Jacobian matrix $\partial \cal V/ \partial \cal U$ as in 
Proposition \ref{criterion-M} depends only on the generators $y_k$ in $G$ and not on their lifts $W_k$ 
to $F(X)$. As a direct application of Proposition \ref{criterion-M} we obtain that the generating system 
$\cal V$ which consists of the generator $y = x^2$ is not Nielsen equivalent to the generating system 
$\cal U = (x)$, since the element $\eta(\partial \cal V/ \partial \cal U) = 1 + \zeta_5$ is not a product 
of $\eta$-images of elementary $\Z G$-matrices (of seize $(1 \times 1)$), as the latter are all 
equal a power of $\pm \zeta_5$.

Of course, we knew ahead of these considerations that $x^2$ and $x$ are not Nielsen equivalent in the cyclic 
group $G$, but with very little effort the above reasoning is extended to the free product $G * \Z$ or even $G * F_n$, 
where Nielsen equivalence is a less obvious issue.

\medskip

For the purpose of this paper homomorphisms of $\Z G$ into commutative rings $A$ are not sufficient; 
instead, we need to consider  homomorphisms $\eta$ of $\Z G$ into matrix rings ${\mathbb M}_m(A)$. 
By a slight abuse of notation we denote by $\eta$ also the induced map from ${\mathbb M}_n(\Z G)$ to 
${\mathbb M}_{m \cdot n}(A)$, to obtain:

\begin{cor}\label{evaluation-M}
Let $G, \, \cal U$ and $\cal V$ be as in Proposition \ref{criterion-M}. Let $A$ be a commutative 
ring, and for some integer $m \geq 1$ let $\eta: \Z G \to {\mathbb M}_m(A)$ denote a ring homomorphism
which satisfies $\det \eta(g) = \det \eta(-g) = 1$ for all $g \in G$.

Then there exists a ``correction term'' $b \in I_\cal U^A$ such that 
$$\det \eta(\partial \cal V / \partial \cal U) + b = 1\, ,$$
where $I_\cal U^A$ is the ideal in $A$ generated by all coefficients of the $\eta$-image 
of any Fox derivative matrix $p_\cal U(\partial R_j / \partial X_i)$, for any set of normal generators $R_j$ of 
$\ker (p_\cal U: F(X) \to G)$.
\end{cor}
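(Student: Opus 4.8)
The plan is to take the matrix identity furnished by Proposition~\ref{criterion-M}, push it through $\eta$, and then take determinants over the commutative ring $A$. Since $\cal U$ and $\cal V$ are Nielsen equivalent, Proposition~\ref{criterion-M} gives a correction matrix $B \in {\mathbb M}_n(\Z G)$ for which $\partial \cal V / \partial \cal U + B = E_1 E_2 \cdots E_r$ is a product of elementary $\Z G$-matrices. Applying $\eta$ entrywise turns the ring homomorphism $\eta \colon \Z G \to {\mathbb M}_m(A)$ into a ring homomorphism ${\mathbb M}_n(\Z G) \to {\mathbb M}_{m n}(A)$ (also denoted $\eta$), so that
$$\eta(\partial \cal V / \partial \cal U) + \eta(B) = \eta(E_1)\, \eta(E_2) \cdots \eta(E_r)\, .$$
Taking the determinant of both sides as $(mn) \times (mn)$-matrices over $A$ reduces the corollary to two computations: the right-hand side should equal $1$, while the left-hand side should equal $\det \eta(\partial \cal V / \partial \cal U)$ up to an element of $I_\cal U^A$.

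For the right-hand side I would check, case by case according to Definition~\ref{gen-elem-matrix}, that $\det \eta(E) = 1$ for each elementary $\Z G$-matrix $E$; multiplicativity of the determinant then yields $\prod_i \det \eta(E_i) = 1$. A type~(2) matrix maps under $\eta$ to a matrix agreeing with the identity except in one off-diagonal $m \times m$ block, hence is unipotent with determinant $1$. A type~(3) diagonal matrix carries trivial units $\pm g$ on its diagonal, and the hypothesis $\det \eta(g) = \det \eta(-g) = 1$ makes each diagonal block contribute the factor $1$. The only delicate case is a permutation matrix $P$ of type~(1): here $\eta(P)$ is the block matrix $P \otimes I_m$, of determinant $(\det P)^m = (\pm 1)^m$, and one must see that this sign is trivial. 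Indeed, applying the hypothesis to $g = 1$ gives $1 = \det \eta(-1) = \det(-I_m) = (-1)^m$, so $m$ is even and $(\det P)^m = 1$.

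The main obstacle is the left-hand side: I must show that $\det(\eta(\partial \cal V / \partial \cal U) + \eta(B))$ differs from $\det \eta(\partial \cal V / \partial \cal U)$ only by an element of the ideal $I_\cal U^A$. The key input is that every entry of $\eta(B)$ lies in $I_\cal U^A$. This holds because $B$ is a correction matrix: each coefficient of $B$ lies in the left ideal $I_\cal U^{\ell}$ generated over $\Z G$ by the Fox derivative images $p_\cal U(\partial R_j / \partial X_i)$ (Definition-Remark~\ref{correction-matrix-M}), so writing such a coefficient as $\sum_h c_h\, p_\cal U(\partial R_{j_h}/\partial X_{i_h})$ and applying $\eta$ expresses the corresponding $m \times m$ block as $\sum_h \eta(c_h)\, \eta(p_\cal U(\partial R_{j_h}/\partial X_{i_h}))$; since the entries of the matrices $\eta(p_\cal U(\partial R_j/\partial X_i))$ generate $I_\cal U^A$ and this is an ideal of $A$, all entries of $\eta(B)$ land in $I_\cal U^A$. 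Granting this, I would expand the determinant by multilinearity in the rows of $\eta(\partial \cal V/\partial \cal U) + \eta(B)$: the summand using no row of $\eta(B)$ is exactly $\det \eta(\partial \cal V/\partial \cal U)$, while every other summand is the determinant of a matrix having at least one row with all entries in $I_\cal U^A$, which by the Leibniz formula lies in $I_\cal U^A$. Hence $\det(\eta(\partial \cal V/\partial \cal U) + \eta(B)) = \det \eta(\partial \cal V/\partial \cal U) + b$ with $b \in I_\cal U^A$, and comparing the two sides gives $\det \eta(\partial \cal V/\partial \cal U) + b = 1$, as claimed.
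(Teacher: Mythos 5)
Your proof is correct and takes essentially the same route as the paper: apply $\eta$ to the identity $\partial \cal V/\partial \cal U + B = E_1\cdots E_r$ from Proposition~\ref{criterion-M}, take determinants over $A$, use the hypothesis on trivial units to make each $\det\eta(E_i)$ equal to $1$, and absorb the contribution of $\eta(B)$ into $I_{\cal U}^{A}$ by expanding the determinant. Your explicit handling of the permutation-matrix sign (deducing that $m$ is even from $1=\det\eta(-1)=(-1)^m$) is a point the paper treats only implicitly, by observing that $\det\eta(P)$ coincides with $\det\eta(-1)$, the determinant of the $\eta$-image of a trivial unit; both resolutions are the same argument.
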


\begin{proof}
It suffices to apply $\eta$ to the sum $\partial \cal V / \partial \cal U + B$ from Proposition \ref{criterion-M} 
and to consider the determinant of the resulting $(m \cdot n \times m \cdot n)$-matrix over $A$. Since $B$ 
is a correction matrix in ${\mathbb M}_n(\Z G)$, its coefficients are all contained in the left ideal $I_\cal U^\ell$ 
of $\Z G$ that is generated by the Fox derivative images $p_\cal U(\partial R_j / \partial X_i)$. Hence 
the determinant $\det \eta(\partial \cal V / \partial \cal U + B)$ is a sum of $\det \eta(\partial \cal V / \partial \cal U)$ 
with terms that are all products which contain at least one of the coefficients of the $\eta$-image of some of the 
$p_\cal U(\partial R_j / \partial X_i)$ as factor, so that they all belong to $I_\cal U^A$.

The claim then is a direct consequence of the assumption that any trivial unit $\pm g$ of $\Z G$ has as $\eta$-image 
the value $1 \in A$, since from Proposition \ref{criterion-M} we know that $\partial \cal V / \partial \cal U + B$ is the 
product of elementary $\Z G$-matrices, and (according to the description in Definition \ref{gen-elem-matrix}) any 
such matrix has as $\eta$-image a matrix with the same determinant as the $\eta$-image of some trivial unit.
\end{proof}

%%%%%%%%%%%%%%%%%%%

%%%%%%%%%%%%%%%%%%%

\section{Cyclic-faithful representations}\label{sec:FuchsianGroups}

Let $G$ be a group as in (\ref{Fuchs}), i.e. 
$$G = \langle s_1, \dots, s_\ell \mid s_1^{\gamma_1},\dots, s_\ell^{\gamma_\ell} , s_1 s_2 
\dots s_\ell \rangle\, ,$$
with $\gamma_i \geq 2$ for $i = 1, \ldots, \ell$, and with $\ell \geq 3$. If in addition $G$ satisfies

\begin{equation}\label{Fuchs-criterion}
\sum_{i = 1}^\ell \frac{1}{\gamma_i} < \ell - 2
\end{equation}

then it is a Fuchsian group. Thus there is a faithful representation

\begin{equation}\label{representation}
\rho_0: G \rightarrowtail PSl_2(\C) \, .
\end{equation}

It is well known (see \cite{Culler} and \cite{Kra:Book}, pp. 181--193) that $\rho_0$ lifts to a faithful 
representation

\begin{equation}
\label{faithful-rep}
\rho: G \rightarrowtail Sl_2(\C)
\end{equation}

if and only if all exponents  $\gamma_i$ in (\ref{Fuchs}) are odd. Furthermore,  every standard 
generator  $s_i$ of $G$ is mapped by $\rho$, up to conjugation in $Sl_2(\C)$, to a matrix of type

\begin{equation}
\label{M-zeta-}
M(\zeta_i) = \left[
\begin{array}{cc}\zeta_i & 0\\
0 & \zeta_i^{-1}
\end{array} \right] \, ,
\end{equation}

where $\zeta_i \in \C$ is a primitive $\gamma_i$-th root of unity.  Matrices such as $M(\zeta_i)$
will be called {\it primitive $\gamma_i $-matrices}.

In this paper we will use representations in $Sl_2(\C)$ which are slightly more general 
in that they need not be faithful on all of $G$:

\begin{defn}
\label{cyclic-faith}
For any $G$ as in (\ref{Fuchs}) a representation $\rho: G \to Sl_2(\C)$ will be called 
{\em cyclic-faithful}  if $\rho$ maps every standard generator $s_i$ to a conjugate of a 
primitive $\gamma_i$-matrix. 
\end{defn}

\begin{rem}
\label{element-lift}
Regarding Definition \ref{cyclic-faith} we note:
\smallskip

\noindent (1) The terminology ``cyclic-faithful'' is justified, since the defining property 
of $\rho$ is equivalent to requiring that $\rho$ is faithful when restricted to the cyclic 
subgroup generated by any of the standard generators.

\smallskip
\noindent (2) Let $\rho_0: G \to PSl_2(\C)$ be faithful, and consider for every generator $s_i$ 
both lifts of $\rho_0(s_i)$ in $Sl_2(\C)$. If $\gamma_i$ is odd, then precisely  one of these two 
lifts will have order $\gamma_i$, while the other has order $2 \gamma_i$. If $\gamma_i$ is even, 
then both lifts will have order $2 \gamma_i$. 

\smallskip
\noindent (3) For the special case $\gamma_i = 2$ 
we recall that 
one has $\zeta_i = -1$ and thus $M(\zeta_i) = M(-1) = - I_2\,$, where as before $I_2$ denotes the 
$2 \times 2$ identity matrix. Indeed, $M(-1) = - I_2$ is the only matrix in $Sl_2(\C)$ which has order 
two.
\end{rem}

In order to find cyclic-faithful representations of $G$ it is  useful to introduce a certain  canonical 
quotient of $G$. Since there are  two similar such quotients,  we will introduce them here 
together, so that the reader will avoid confusion later on.

\begin{defn} \label{2-quotient}
Let $G$ be as in (\ref{Fuchs}).

\smallskip
\noindent  (1) Set $\gamma'_i = \frac{\gamma_i}{2}$ 
if $\gamma_i$ is even and $\gamma'_i = \gamma_i$ if $\gamma_i$ is odd.
Define the {\em full 2-quotient}:
$$G^* = G / \langle \langle \{s_i^{\gamma'_i} \mid i = 1, \ldots, \ell \}\rangle\rangle$$

\smallskip
\noindent 
(2) The {\em canonical 4-quotient} of $G$ is given by
$$G^\# = G / \langle \langle \{s_i^{\hat\gamma_i} \mid i = 1, \ldots, \ell \}\rangle\rangle \, ,$$
where we set $\hat \gamma_i = \frac{\gamma_i}{2}$ if $\gamma_i$ is even,  but not divisible by 4
nor equal to 2, and otherwise we set $\hat \gamma_i = \gamma_i$.
\end{defn}

\begin{rem}\label{quotients}
We note that the full 2-quotient $G^*$ is in general generated by fewer elements than $G$, since any 
$s_j$ which in $G$ has order $\gamma_j = 2$ will be trivial in $G^*$. 

The canonical $4$-quotient $G^\#$, on the other hand, will in almost all cases\footnote{\, The only exceptions 
occur if $G^\#$ is one of the groups studied in \cite{BoiZie:HeegGen} and \cite{PeRoZie}, which have already 
been mentioned in the introduction.}
be of the same rank as $G$.  It has the useful property that any generator $s_i$ is mapped in $G^\#$ to an 
element of order which is either odd, equal to 2, or divisible by 4. Furthermore $G^\#$ is ``stable'' in the sense 
that  $(G^\#)^\# = G^\#$.
\end{rem}

\begin{rem}\label{lifts}
In order to find a cyclic-faithful representation $\rho$ of a Fuchsian group $G$ as in (\ref{Fuchs}), 
our strategy is to first pass to the quotient $G^*$, then use a faithful representation $\rho_0$ of this 
quotient group in $PSl_2(\C)$, and finally define the images $\rho(s_i)$ as suitable lifts of $\rho_0(s_i)$.
According to Remark \ref{element-lift} (2), if properly chosen, these lifts $\rho(s_i)$ are all conjugates of 
primitive $\gamma_i$-matrices, where $\gamma_i$ is the original exponent of $s_i$ in $G$. There are, 
however, three obstructions to overcome, when attempting this procedure:
\begin{enumerate}
\item 
The quotient group $G^*$ may not  be Fuchsian. Hence, in order to ensure the existence of 
$\rho_0$ as above, one has to verify the  inequality 

\begin{equation}
\label{eq3.5}
\sum_{\{i \,\mid\, \gamma_i \geq 3\}} \frac{1}{\gamma'_i} < m -2 \, .
\end{equation}
for $\gamma'_i$ as defined in Definition \ref{2-quotient} (1), and $m$ equal to the number of standard 
generators $s_i$ with exponent $\gamma_i \geq 3$.

\item For the generators $s_j$ of order $\gamma_j =2$ the above ``lifting trick'' doesn't work:
As noted already in Remark \ref{element-lift} (3), the only element of $Sl_2(\C)$ of order 2 is 
the matrix $-I_2$, which is also equal to $M(\zeta_i)$ as in (\ref{M-zeta-}). Hence any 
cyclic-faithful representation of $G$ must satisfy the equality
\begin{equation}
\label{eq3.6}
\rho(s_j) = -I_2 \, ,
\end{equation}
for any $s_j$ with exponent $\gamma_j = 2$.

\item Even if (\ref{eq3.5}) and (\ref{eq3.6}) above are satisfied, it may still be that the {\em product relation} 
$s_1 s_2 \ldots s_\ell = 1$ does not hold for the chosen $\rho$-images of the $s_i$. If, however, one has

\begin{equation}\label{product-relation}
\rho(s_1) \rho(s_2) \ldots \rho(s_\ell) = I_2 \, ,
\end{equation}
then the above definition of the $\rho(s_i)$ defines a representation $\rho: G \to Sl_2(\C)$ 
which is cyclic-faithful.
\end{enumerate}
\end{rem}

In the following section  several methods which ensure the existence of such cyclic-faithful 
representations $\rho$ are presented. It turns out that satisfying equality (\ref{product-relation}), 
in the case where $n$ is odd, is surprisingly tricky.

%%%%%%%%%%%%%%%%%%%

%%%%%%%%%%%%%%%%%%%

\section{Exceptional Fuchsian groups}\label{sec:RepOfG}

We start this section by listing conditions on groups $G$ as in (\ref{Fuchs})
 which ensure the existence of a cyclic-faithful representation of $G$ into 
$Sl_2(\C)$. We then define ``exceptional'' Fuchsian groups, and show that 
any non-exceptional $G$ satisfies one of these conditions.

\medskip

\begin{prop} \label{faithful}
Let $G$ be as in (\ref{Fuchs}).  Let $n$ denote the number of exponents 
$\gamma_j = 2$, and let $m$ denote the number of exponents $\gamma_i \geq 3$.
 Assume that one of the following conditions is satisfied:
\begin{enumerate}
\item There is at least one exponent $\gamma_i$ which is divisible by 4\,. Further\-more the inequality 
\begin{equation}\label{fake-sum}
\sum_{\{ i \, \mid \, \gamma_i \geq 3\}} 
\frac{1}{\gamma'_i} < m - 2
\end{equation}
is satisfied, where, as in Definition \ref{2-quotient}, we set $\gamma'_i = \frac{\gamma_i}{2}$
 if $\gamma_i$ is even and $\gamma'_i = \gamma_i$ if $\gamma_i$ is odd.
\vskip4pt
\item Every exponent $\gamma_i \neq 2$ is odd, and the number $n \geq 0$ of exponents 
$\gamma_j = 2$ is even. Assume furthermore that 
\begin{enumerate}
\vskip4pt
\item[(i)] $m \geq 4$, or
\vskip4pt
\item[(ii)] $m = 3$, and there is at least one $\gamma_i \geq 5$.
\end{enumerate}
\smallskip
\item Every exponent $\gamma_i \neq 2$ is odd, the number $n$ is odd and $m \geq 6$.
\vskip4pt
\item Every exponent $\gamma_i \neq 2$ is odd,  the number $n$ is odd and one of the 
following is true:
\vskip4pt

\begin{enumerate}
\item[(i)]  $m = 5$, and there is some $\gamma_i \geq 7$.
\vskip4pt

\item[(ii)] $m = 5$, and  there are at least two $\gamma_i, \gamma_{i'} \geq 5$.
\vskip4pt

\item[(iii)]
$m = 4$, and there are  at least two $\gamma_i, \gamma_{i'} \geq 7$.
\vskip4pt

\item[(iv)] $m = 4$, and all four exponents satisfy $\gamma_1, \gamma_2,\gamma_3, \gamma_4 \geq 5$.
\end{enumerate}
\end{enumerate}
\vskip4pt
Then there is a cyclic-faithful representation 
$$\rho: G \to Sl_2(\C) \, .$$

\end{prop}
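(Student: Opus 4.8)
The plan is to follow the three-step strategy of Remark \ref{lifts}: pass to the full $2$-quotient $G^*$ of Definition \ref{2-quotient}, realize it by a faithful representation $\rho_0 \colon G^* \rightarrowtail PSl_2(\C)$, and then define $\rho(s_i) = A_i$ by lifting each $\rho_0(s_i)$ to a primitive $\gamma_i$-matrix in $Sl_2(\C)$. The three obstructions of Remark \ref{lifts} must be cleared in each of the four cases. Obstruction (1), that $G^*$ be Fuchsian, is precisely inequality (\ref{eq3.5}); in case (1) it is assumed outright as (\ref{fake-sum}). In cases (2)--(4) every $\gamma_i \neq 2$ is odd, so $\gamma_i' = \gamma_i$ and $G^*$ is the genus-$0$ Fuchsian group on the $m$ generators of order $\gamma_i \geq 3$; here $\sum_{\gamma_i \geq 3} 1/\gamma_i < m-2$ holds automatically when $m \geq 4$ (since $\sum 1/\gamma_i \leq m/3 < m-2$), and for $m = 3$ it holds exactly when some $\gamma_i \geq 5$, the unique odd triple failing this being $(3,3,3)$. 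This matches the hypotheses (2)(i)--(ii).

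Obstruction (2) is forced by (\ref{eq3.6}): each of the $n$ order-$2$ generators maps to $-I_2$. Since $-I_2$ is central, the product relation (\ref{product-relation}) collapses to
\begin{equation*}
A_{i_1} A_{i_2} \cdots A_{i_m} = (-1)^n I_2 ,
\end{equation*}
where $A_{i_1}, \dots, A_{i_m}$ are the lifts of the generators of order $\geq 3$, taken in their induced cyclic order. As $\rho_0(s_{i_1}) \cdots \rho_0(s_{i_m}) = 1$ already holds in $PSl_2(\C)$, this product is automatically $\pm I_2$; the whole of obstruction (3) is therefore to pin down this sign $\delta$ and match it to $(-1)^n$. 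By Remark \ref{element-lift}(2), when $\gamma_i$ is odd there is a \emph{unique} lift $A_i$ of order $\gamma_i$ (the other lift has order $2\gamma_i$ and is not a primitive $\gamma_i$-matrix), so no sign can be gained there; whereas when $4 \mid \gamma_i$ both lifts are primitive $\gamma_i$-matrices, yielding a free sign. This dichotomy organizes the remaining work.

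In case (1) the hypothesis supplies an exponent divisible by $4$: flipping the sign of its lift keeps it a primitive $\gamma_i$-matrix and toggles the product between $+I_2$ and $-I_2$, so $(-1)^n$ is reached for either parity of $n$, clearing obstruction (3). For the all-odd cases the baseline sign is $\delta = +1$: indeed, since every exponent of $G^*$ is odd, the fact cited after (\ref{faithful-rep}) shows that $\rho_0$ lifts to an honest homomorphism $G^* \to Sl_2(\C)$, whose values on the generators are primitive $\gamma_i$-matrices (\ref{M-zeta-}) with product $\rho(\prod s_{i_k}) = I_2$. This settles case (2), where $n$ is even and hence $(-1)^n = +1 = \delta$; one then sets $\rho(s_j) = -I_2$ on the $n$ order-$2$ generators and the full product relation holds.

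The main obstacle is the case $n$ odd, treated in (3) and (4), where $\delta = -1$ is required but the homomorphism lift stubbornly delivers $+1$ and no single generator offers a free flip. The plan is to exploit the freedom in the rotation numbers: replacing the eigenvalue $\zeta_i$ of $A_i$ by another primitive $\gamma_i$-th root $\zeta_i^{k_i}$ yields a different collection of primitive $\gamma_i$-matrices whose product sign is governed by a parity of the $k_i$ computable via a relative Euler-number count for the genus-$0$ group $G^*$. Only exponents with $\gamma_i \geq 5$ admit a genuine second choice of $k_i$ (and $\gamma_i \geq 7$ a third), and each such re-choice must keep the corresponding cone datum realizable by a Fuchsian, or at least an $Sl_2(\C)$, representation --- which is exactly why the size-and-count hypotheses ($m \geq 6$ in (3), and the explicit configurations of large exponents in (4)) are needed to secure enough room to flip $\delta$ to $-1$. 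The delicate part is the bookkeeping that realizes this sign change with so few or so small generators and verifies the borderline configurations (4)(i)--(iv) individually; this is the ``surprisingly tricky'' phenomenon for odd $n$ announced at the end of Section \ref{sec:FuchsianGroups}.
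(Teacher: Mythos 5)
Your treatment of Cases (1) and (2) is correct and coincides with the paper's: pass to $G^*$, use (\ref{fake-sum}) to get $\rho_0$, and either flip the sign of the lift at a generator with $4 \mid \gamma_i$ (Case (1)) or use the homomorphism lift available when all exponents are odd together with $(-1)^n = +1$ (Case (2)). The genuine gap is in Cases (3) and (4), which are exactly the ``surprisingly tricky'' part: there you do not give a proof, only a plan. The invoked ``relative Euler-number count'' is never defined, and the assertion that the hypotheses of (3) and (4) ``secure enough room to flip $\delta$ to $-1$'' is precisely the statement that has to be proved, not a step in proving it. Note also a structural obstacle to your plan as stated: when every $\gamma_i$ is odd, the order-$\gamma_i$ lift of a given element of $PSl_2(\C)$ is \emph{unique} (Remark \ref{element-lift}(2)), so ``replacing $\zeta_i$ by $\zeta_i^{k_i}$'' cannot be done while keeping the same $\rho_0$; it forces you to replace $\rho_0$ itself by a different, necessarily non-faithful, $PSl_2(\C)$-representation with prescribed elliptic conjugacy classes and controlled lifting sign. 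Producing such representations is a nontrivial existence theorem in its own right, and your proposal defers it entirely (``the delicate part is the bookkeeping'').

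For comparison, the paper's mechanism for odd $n$ is concrete and elementary: partition the generators of order $\geq 3$ into two subsets, adjoin to each subset an auxiliary generator $s_0$ of order $4$, obtaining two groups $G_1, G_2$ of the form treated in Case (1); the numerical hypotheses in (3) and (4) serve exactly to guarantee that each half still satisfies inequality (\ref{fake-sum}) after the term $\tfrac12$ coming from $s_0$ is added (e.g.\ $\tfrac17 + \tfrac13 + \tfrac12 < 1$, $\tfrac15 + \tfrac15 + \tfrac12 < 1$) --- not, as you guessed, to provide extra choices of rotation numbers. Each $G_i$ then has a cyclic-faithful representation $\rho_i$ sending its sub-product of standard generators to a conjugate of $M(\pm i)$; after normalizing and, if necessary, replacing $\rho_2$ by its entrywise complex conjugate $\bar\rho_2$, the two representations combine to a single representation of $G$ in which the product of all order-$\geq 3$ images equals $M(i)^2 = -I_2$, which together with $\rho(s_j) = -I_2$ on the $n$ (odd) order-$2$ generators restores the product relation (\ref{product-relation}). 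Without this (or some equally explicit) sign-flipping device, your argument does not establish Cases (3) and (4).
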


\begin{proof}
Consider the four cases in order:

\smallskip
\noindent \underline{Case (1):} Proceed exactly as in Remark \ref{lifts}:  Assumption
 (\ref{fake-sum})  ensures (see (\ref{Fuchs-criterion})) the existence of a faithful representation  
 $\rho_0: G^* \to P Sl_2(\C)$ of the full 2-quotient 
  $G^* = G / \langle \langle \{s_i^{\gamma'_i} \mid i = 1, \ldots, \ell \}\rangle\rangle$.
By assumption one of the $s_i$ has order $\gamma_i$ in $G$, where $\gamma_i$ is 
divisible by 4; thus both lifts of $\rho_0(s_i)$ to $Sl_2(\C)$ have order $\gamma_i$
(see Remark \ref{element-lift} (2)). Hence the right choice of $\rho(s_i)$  ensures that 
(\ref{product-relation}) is satisfied, and thus  $\rho$ is a cyclic-faithful representation of $G$.

\smallskip
\noindent \underline{Case (2):}
Proceed again as in Remark \ref{lifts}. The assumption that all $\gamma_i \geq 3$ are odd 
implies $\gamma'_i = \gamma_i$ for all such $\gamma_i$. Hence the assumptions (i) or (ii) ensure 
that inequality (\ref{fake-sum}) is satisfied and thus $\rho_0$ exists. As all $\gamma_i \geq 3$ are 
assumed to be odd, $\rho_0$ lifts to a faithful representation of $G^*$ in $Sl_2(\C)$, see (\ref{faithful-rep}).

In this case the number $n$ of generators $s_j$ of order $\gamma_j = 2$ is  even. We proceed 
as in step (2) of Remark \ref{element-lift} and extend the above lift of $\rho_0$ by the equalities 
(\ref{eq3.5}) to all generators of $G$. Since $n$ is even,  the product relation (\ref{product-relation}) 
is satisfied, so that we obtain the desired  cyclic-faithful representation $\rho$.

\smallskip
\noindent \underline{Case (3):} Proceed first as in Case (2).  The assumption $m \geq 6$ 
ensures  that  inequality (\ref{fake-sum}) holds and hence $\rho_0$ exists as before. 
However, since in this case $n$ is odd, the product relation (\ref{product-relation}) fails by a 
factor of $-1$. 

In order to deal with this problem we introduce the following ``trick'': Using the assumption 
that $m \geq 6$, we partition the generators $s_i \geq 3$ into two sets $s_1, \dots, s_r$ and 
$s_{r+1}, \dots, s_m$ in such a way that both, $r \geq 3$ and $m-r \geq 3$, hold. 
Set   $s_0 = (s_1s_2 \dots s_r)^{-1}$  and consider the group 
$$G_1 = \langle s_0, \ldots, s_r \mid s_1^{\gamma_1}, \ldots, s_r^{\gamma_r}, s_0^4,
 \,\,s_0 s_1s_2 \dots  s_r \rangle$$
Note that the assumptions $m \geq 3$ and $\gamma'_i = \gamma_i$ for all $\gamma_i \geq 3$ 
(assumed to be odd) ensure that $G_1$ satisfies the conditions given in Case (1). 
Thus $G_1$ admits a cyclic-faithful representation $\rho_1: G_1 \to Sl_2(\C)$. 

In particular, $\rho_1$ maps the product $s_0^{-1} = s_1s_2  \dots s_r$ to a conjugate of the primitive 
$4$-matrices $M(i)$ or $M(-i)$ (see equality (\ref{M-zeta-})), so that after  conjugating $\rho_1$ suitably
in $Sl_2(\C)$ we can assume $\rho_1(s_1s_2 \dots s_r) = M(i)$ or $\rho_1(s_1s_2 \dots s_r) = M(-i)$.

Apply the same method to the generators $s_{r+1}, \dots, s_m$ to obtain a group $G_2$ and a 
representation $\rho_2: G_2 \to Sl_2(\C)$ which maps the product $(s_{r+1}\dots s_m)$ to $M(i)$ 
or to $M(-i)$. 
Let $\bar \rho_2: G \to Sl_2(\C)$ be the representation obtained from $\rho_2$ through replacing, 
in the $2 \times 2$ image matrix of any element of $G_2\,$, each coefficient by its complex conjugate. 

It follows that combining  $\rho_1$ with either $\rho_2$, or $\rho_1$ with  $\bar\rho_2$, will map the product 
$s_{1}s_{2} \dots s_m$ to $M(i)^2 = -I_2$. Thus we can again proceed as in step (2) of Remark \ref{element-lift}
and extend this representation by the equalities (\ref{eq3.6}) to all generators of $G$: Since $n$ is odd, now  
the product relation (\ref{product-relation}) is satisfied, so that we obtain again a cyclic-faithful representation 
$\rho$ as claimed.

\smallskip
\noindent \underline{Case (4):} 
In order to apply the same trick as in the previous case,  extra arguments are needed to ensure that 
the cyclic-faithful representations $\rho_1$ and $\rho_2$ exist:

In the subcases (i) and (ii), in order to define $G_1$ and $G_2$ we partition (after reordering) the 
generators $s_1, \dots, s_5$ of order $\gamma_i \geq 3$ into two subsets $\{s_1, s_2, s_3\}$ and 
$\{s_4, s_5\}$. This partition is chosen so that for (i) either $s_4$ or $s_5$ has order $\geq 7$, and 
for (ii) both $s_4$ and $s_5$ have order $\geq 5$. It follows that, after adding a generator $s_0$ as 
in Case (3) above, both $G_1$ and $G_2$ satisfy the inequality (\ref{fake-sum}): 
Indeed, for $G_2$ the corresponding triple sum of exponents is smaller or equal to  
$\frac{1}{7} + \frac{1}{3} + \frac{1}{2} < 1$ 
(for (i)) or $\frac{1}{5} + \frac{1}{5} + \frac{1}{2} < 1$ (for (ii)).

In the subcases (iii) and (iv) the partition is $\{s_1, s_2\}\, \cup \, \{s_3, s_4\}$, where both 
sides are treated precisely as the subset $\{s_4, s_5\}$ in subcases (i) and (ii) above.

This ensures that both resulting  ``partial quotient groups'' $G_1$ and $G_2$ admit cyclic-faithful 
representations $\rho_1$ and $\rho_2$ as in Case (3) above. The rest of the  proof of Case (3) applies 
word-by-word.
${}^{}$
\end{proof}

%%%%%%%

\begin{defn} 
\label{exceptional}
Let $G$ be a group as in (\ref{Fuchs}), i.e. 
$$G = \langle s_1, \ldots, s_\ell \mid s_1^{\gamma_1}, \ldots, s_\ell^{\gamma_\ell}, 
s_1 s_2 \ldots s_\ell \rangle$$ with all $\gamma_i \geq 2$.  To simplify notation, we assume that the indexing 
of the generators has been adjusted (using equality (\ref{gernerator-permutation})) to achieve 
$\gamma_i \geq \gamma_j$ if $i \leq j$.

Let $m$ be the number of exponents $\gamma_i \geq 3$, and $n\geq 0$ be the number of exponents 
$\gamma_j = 2$. 
In this case we say $G$ {\em is of type} 
$$(\gamma_1, \ldots ,\gamma_m \mid n)\, .$$ 

Furthermore we use  below the following convention: for any odd integer $k \geq 0$ we write $k^*$ to 
include both cases, $k^* = k$ or $k^* = 2k$. For even $k$ the term $k^*$ is purposefully undefined, 
so that the use of $k^*$ implies in particular that $k$ is odd.

Then $G$ is called {\em exceptional} if 
one of the following conditions is satisfied. 

\begin{enumerate}
\item[(a)] The number of exponents $\gamma_i \geq 3$ satisfies $m \leq 3$.
\vskip4pt
\item[(b)] $n$ is even and $G$ is of type $(\gamma_1, 6, 5, 4 \mid n)$ or $(\gamma_1, 5, 4, 3 \mid n)$. 
\vskip4pt
\item[(c)] $n$ is even and $G$ is of type $(\gamma_1, \gamma_2, 4, 4 \mid n)$.
\vskip4pt
\item[(d)] $n$ is odd and $m = 4$.
\vskip4pt
\item[(e)] $n$ odd and $G$ is of type $(s^*, t^*, p^*, q^*, 3^* \mid n)$, with  $p, q \in \{3, 5\}$.
\vskip4pt
\item[(f)] $n$ odd and $G$ is of type $(p^*, q^*, 3^*, 3^*, 3^*, 3^* \mid n)$, with $q \in \{3, 5\}$.
\end{enumerate}
Otherwise $G$ is {\em non-exceptional}.
\end{defn}

\begin{rem} \label{exceptional-quotient}
The following two statements follow directly  from Definition \ref{exceptional}: 
\smallskip

\noindent (1) For any non-exceptional group $G$ the canonical 4-quotient $G^\#$ 
(see Definition \ref{2-quotient}) is also non-exceptional. Note that, in order 
for this statement to be true, in the Cases (e) and (f) one needs the above definition  of $k^*$.

\medskip
\noindent (2) Any group $G$ as in Theorem \ref{main} is non-exceptional.
\end{rem}

\begin{lem} \label{reps}
Let $G$ be a group as in (\ref{Fuchs}) which is non-exceptional, and assume that  
\begin{equation}\label{divisible}
\text{if} \,\,\gamma_i  \,\,\text{is even and} \,\, \gamma_i \neq 2 \,, \,\, \text{then} \,\, 
\gamma_i \,\, \text{is divisible by 4.}
\end{equation}
In other words, one has $G = G^\#$.
Assume also that for one of the standard generators, say $s_h$, 
 we have $\gamma_h \geq 5$. Then the following hold:
\begin{enumerate}
\item There is a cyclic-faithful representation $\rho: G \to Sl_2(\C)$.
\vskip4pt
\item  For any choice of $k\neq h$ the quotient group $G_0 = G/\langle\langle s_k\rangle\rangle$ admits 
a cyclic-faithful representation $\rho: G_0 \to Sl_2(\C)$.
\end{enumerate}
\end{lem}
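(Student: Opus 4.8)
The plan is to reduce both statements to Proposition \ref{faithful}, supplemented by a direct matrix construction in $Sl_2(\C)$ for the handful of configurations that Proposition \ref{faithful} does not reach.

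For part (1) I would verify that $G$ satisfies one of the four hypotheses of Proposition \ref{faithful}. Since $G = G^\#$, every exponent $\gamma_i \neq 2$ is odd or divisible by $4$, and non-exceptionality (which excludes Definition \ref{exceptional}(a)) gives $m \geq 4$. If some $\gamma_i$ is divisible by $4$, I use case (1): the required inequality (\ref{fake-sum}) holds because the term coming from $s_h$ is $\tfrac{1}{\gamma'_h} \leq \tfrac14$ while every other term is $\leq \tfrac12$, so the left-hand side is at most $\tfrac14 + \tfrac{m-1}{2}$, which is $< m-2$ precisely when $m \geq 4$. If no exponent is divisible by $4$, all $\gamma_i \neq 2$ are odd: for $n$ even I apply case (2)(i), and for $n$ odd I first note that Definition \ref{exceptional}(d) forces $m \geq 5$, so case (3) covers $m \geq 6$, while for $m = 5$ the exclusion of type (e) together with $\gamma_h \geq 5$ forces either some $\gamma_i \geq 7$ or two exponents $\geq 5$, i.e. case (4)(i) or (4)(ii).

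For part (2) the key observation is that $G_0 = G/\langle\langle s_k \rangle\rangle$ is again a group of the form (\ref{Fuchs}), now on the generators $s_i$ with $i \neq k$ and with product relation $\prod_{i \neq k} s_i = 1$; since $m \geq 4$ it still has at least three generators, and since $k \neq h$ the generator $s_h$ with $\gamma_h \geq 5$ survives. Writing $m_0, n_0$ for the corresponding counts, one has $m_0 = m$, $n_0 = n-1$ if $\gamma_k = 2$, and $m_0 = m-1 \geq 3$, $n_0 = n$ if $\gamma_k \geq 3$. I would rerun the case analysis of part (1) on $G_0$: most branches transfer verbatim, using that $\gamma_h \geq 5$ survives and that the non-exceptionality of $G$ constrains its exponents. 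For instance, when $m_0 = 3$ — which forces $\gamma_k \geq 3$, $m = 4$, and hence $n_0 = n$ even — case (2)(ii) of Proposition \ref{faithful} applies precisely because $\gamma_h \geq 5$ furnishes the required large exponent.

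The main obstacle, and the reason the large exponent $\gamma_h \geq 5$ is imposed, is realizing the product relation (\ref{product-relation}) with the correct sign when the number $n_0$ of order-$2$ generators of $G_0$ is odd — the phenomenon called ``surprisingly tricky'' at the end of Section \ref{sec:FuchsianGroups}. This happens only when $\gamma_k = 2$ and $n$ is even, and the genuinely new situation is $m_0 \in \{4,5\}$ with exponents too small for case (4), the representative case being $G_0$ of type $(5,3,3,3)$. Here I abandon the lifting construction and solve the relation directly in $Sl_2(\C)$: I send every order-$2$ generator to $-I_2$, and since these images are central I may, after reordering via (\ref{gernerator-permutation}), reduce the relation to $\rho(s_h)\,\rho(s_{i_1})\cdots\rho(s_{i_{m_0-1}}) = (-1)^{n_0} I_2 = -I_2$, where $s_{i_1}, \ldots, s_{i_{m_0-1}}$ are the remaining generators of order $\geq 3$. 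Choosing primitive $\gamma_{i_j}$-matrices for these and setting $\rho(s_h) = -\big(\rho(s_{i_1})\cdots\rho(s_{i_{m_0-1}})\big)^{-1}$ reduces everything to one trace condition: because the trace of a product of matrices from fixed non-central conjugacy classes ranges over all of $\C$, I can arrange $\mathrm{tr}\big(\rho(s_{i_1})\cdots\rho(s_{i_{m_0-1}})\big) = -2\cos(2\pi/\gamma_h)$, so that $\rho(s_h)$ becomes a genuine primitive $\gamma_h$-matrix as in (\ref{M-zeta-}). The exponent $\gamma_h \geq 5$ is what makes this target value semisimple and keeps the resulting representation irreducible, and the product relation then holds by construction.
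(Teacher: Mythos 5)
Your part (1) is correct, and both parts follow the paper's basic route (reduction to Proposition \ref{faithful}), but part (2) has a genuine gap, and it is not where you think it is. You assert that the only configurations escaping Proposition \ref{faithful} are those with $\gamma_k = 2$ and $n$ even, and accordingly you handle $m_0 = 3$ by case (2)(ii) of Proposition \ref{faithful}. Case (2), however, requires \emph{every} exponent $\gamma_i \neq 2$ of $G_0$ to be odd, whereas the hypothesis $G = G^\#$ still allows exponents divisible by $4$. Concretely, let $G$ be of type $(8,4,3,3 \mid n)$ with $n$ even. This $G$ is non-exceptional (conditions (b) and (c) of Definition \ref{exceptional} require an exponent $5$, resp.\ two exponents equal to $4$, and (d)--(f) require $n$ odd), it satisfies $G = G^\#$, and it has $\gamma_h = 8 \geq 5$. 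Killing a generator of order $3$ yields $G_0$ of type $(8,4,3 \mid n)$ with $m_0 = 3$: case (1) of Proposition \ref{faithful} fails because $\tfrac14 + \tfrac12 + \tfrac13 = \tfrac{13}{12} \not< m_0 - 2 = 1$, and cases (2)--(4) are unavailable because $8$ and $4$ are even. Neither your ``verbatim transfer'', nor your $m_0 = 3$ argument, nor your direct construction (which you explicitly reserve for the case $\gamma_k = 2$) reaches this $G_0$.

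The remedy is that your own patch is strong enough to close this hole, and indeed to replace the entire case analysis: sending every order-$2$ generator to $-I_2$, choosing all but one of the remaining generators in their prescribed conjugacy classes, and solving for the last one via the fact that $\mathrm{tr}(AB)$ is onto $\C$ as $A$ runs over a fixed non-central $Sl_2(\C)$-class and $B \neq \pm I_2$ is a fixed matrix, requires nothing beyond $m_0 \geq 3$, which is guaranteed by the negation of Definition \ref{exceptional}(a). Run uniformly, this proves both statements of the lemma at once, covering $(8,4,3 \mid n)$ as well as your case $(5,3,3,3 \mid n_0)$ with $n_0$ odd. That would be genuinely different from the paper, whose proof consists solely of the case analysis reducing to Proposition \ref{faithful}; notably, for the parity case you flagged, the paper appeals to the negations of conditions (d)--(f) of Definition \ref{exceptional} for $G$, which are vacuous when $n$ is even, so your instinct that this case needs a separate argument was sound and your construction supplies something the paper's argument does not. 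Two corrections to your write-up: cyclic-faithfulness (Definition \ref{cyclic-faith}) is a condition on the conjugacy class of each generator separately, so irreducibility is irrelevant and $\gamma_h \geq 5$ plays no role in your construction beyond what $\gamma_h \geq 3$ already gives; and the trace-surjectivity fact you need is the version for one matrix varying in a non-central class against a fixed second matrix, since the other factors must be frozen once chosen.
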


\begin{proof} It follows immediately from Definition ~\ref{exceptional} that  $G$ being exceptional 
implies that $G_0$ is also exceptional. Hence it suffices to prove statement (2). This is done below by 
considering several cases and showing in each case that  $G_0$ satisfies one of the four conditions 
listed in Proposition \ref{faithful}. This shows the existence of the desired cyclic-faithful representation 
$\rho$.

\smallskip

The assumption that $G$ is non-exceptional implies that each of the conditions  (a) - (f) stipulated 
in Definition ~\ref{exceptional} is false. The negation of condition (a)  implies that for $G_0$ the 
number  $m_0$ of exponents $\gamma_i \geq 3$ satisfies: 
\begin{equation}
\label{m-bigger-3}
m_0 \geq 3
\end{equation}

There are two cases to be distinguished: 

\smallskip

\noindent (A) Assume that one of the $G_0$-exponents $\gamma_i$ is divisible by 4. 
This case splits further into  three subcases:
\smallskip

\noindent $(i)$ If all $\gamma_i \geq 3$ satisfy $\gamma_i \neq 4$,  then we have for $\gamma'_i$ 
(as in Definition ~\ref{2-quotient}) that $\frac{1}{\gamma'_i} \leq \frac{1}{3}$, if $i \neq h$.
From assumption (\ref{divisible}) we deduce furthermore that $\gamma_h \neq 6$ and thus 
$\frac{1}{\gamma'_h} \leq \frac{1}{5}$. Hence the inequality 
\begin{equation}\label{sum-4}
\sum_{\{ i \, \mid \, \gamma_i \geq 3, \,  i \neq k\}} \frac{1}{\gamma'_i} < m_0 - 2
\end{equation}
holds, and thus all assumptions from  Case (1) in Proposition \ref{faithful} are satisfied for $G_0$.

\smallskip

\noindent $(ii)$ 
Assume that for $G_0$ one has $m_0 \geq 4$. Then even if some 
$\gamma_i$ are equal to 4, we still have  $\frac{1}{\gamma'_i} \leq \frac{1}{2}$ for $i \neq h$, 
so that inequality (\ref{sum-4}) holds again, and we can apply the same conclusion as in subcase $(i)$.  

\smallskip

\noindent $(iii)$ In the remaining case there are precisely three exponents $\gamma_i \geq 3$ in 
$G_0$, among which we have $\gamma_h \geq 5$, and another exponent, say $\gamma_j$,
 which is equal to 4.  By the negation of conditions (c) and (d) in Definition \ref{exceptional} for $G$, 
 the third exponent $\gamma_i \geq 3$ must be different from 4. Furthermore, the negation of 
 conditions (b) and (d) rules out the possibility of  $\frac{1}{2} + \frac{1}{3} + \frac{1}{5}$ 
on the left hand side of the above inequality (\ref{sum-4}). For all other cases  inequality 
(\ref{sum-4}) is satisfied, so that  again we have the same  conclusion as in subcase $(i)$ above. 

\medskip

\noindent (B) In this case  all exponents $\gamma_i \geq 3$ are assumed to be odd,
 so that $\gamma'_i =\gamma_i$ holds for any $\gamma_i \geq 3$. 
 There are still two more subcases to consider:

\smallskip

\noindent $(iv)$ If  $n$  is even, then by (\ref{m-bigger-3}) the assumptions of conditions (i) or (ii) of Case (2) 
in Proposition \ref{faithful} are satisfied for $G_0$, due to our hypothesis that $\gamma_h \geq 5$.

\smallskip

\noindent $(v)$ If $n$ is odd, then the negation of  condition (d) in Definition \ref{exceptional} for 
$G$, together with condition (\ref{m-bigger-3}), ensure  that $G_0$ has  at least  four exponents 
 $\gamma_i \geq 3$. If there are precisely  four such exponents, the negation of condition (e) in 
Definition \ref{exceptional} (for $G$) shows that $G_0$ satisfies (iii) or (iv) of Case (4) in 
Proposition \ref{faithful}. If there are precisely 5 such exponents in $G_0$, then (for $G$) the negation 
of  condition (f) in Definition \ref{exceptional} shows that $G_0$ satisfies conditions  (i) or (ii) of 
Case (4) in Proposition \ref{faithful}. Finally, if there are six or more exponents $\gamma_i \geq 3$ in $G_0$, 
then $G_0$ satisfies Case (3) in Proposition \ref{faithful}.

\smallskip

Hence in all  cases the desired representation $\rho: G_0 \to Sl_2(\C)$ is provided by Proposition \ref{faithful}.
\end{proof}

%%%%%%%%%%%%%%%%%%%%%%%%

%%%%%%%%%%%%%%%%%%%%%%%%

\section{Computations in a cyclic-group ring with coefficients in $\C$}

\label{evaluation}

In this section we will prove Proposition \ref{Lemma-1-7}, which plays a crucial 
role in the proof of Proposition \ref{first-gen} and thus of Theorem \ref{main}.
This section can be read independently from the rest of the paper; the arguments 
presented here include several lengthy 
computations 
in a group ring with complex coefficients.

\smallskip

Throughout this section $p$ and $q$ will denote integers which satisfy 
$$p, q \geq 3 \quad \text{and} \quad p\,|\,q\,,$$  
and we also fix some primitive $q$-th root of unity $\zeta$. Let $t$ be the generator of a c
yclic group $\langle\, t \mid t^p \,\rangle$ of order $p$. For any 
$a \in (\Z /q \Z)^*, \, b \in (\Z /p \Z)^*$ and $r \in \R$ we define the following element in the 
group ring $\C[\langle \, t \mid t^p \, \rangle]$:
$$\Pi(a, b, r) = r(\zeta^a t^a - 1)(\zeta^{-a} t^{-a} - 1)(t^{b} -1)(t^{-b} -1)$$

We compute:
$$\Pi(a, b, r) = r(2 -\zeta^a t^a - \zeta^{-a} t^{-a})(2 -t^{b} -t^{-b})$$
$$= r[2(2 -\zeta^a t^a - \zeta^{-a} t^{-a})-(2 -\zeta^a t^a - \zeta^{-a} t^{-a})t^{b}
-(2 -\zeta^a t^a - \zeta^{-a} t^{-a})t^{-b}]$$
\begin{equation}\label{sum}
= r[(4 -2\zeta^a t^a - 2\zeta^{-a} t^{-a})-(2t^b -\zeta^a t^{a+b} - \zeta^{-a} t^{-a+b})
-(2t^{-b} -\zeta^a t^{a-b} - \zeta^{-a} t^{-a-b})]
\end{equation}

\begin{prop}
\label{Lemma-1-7}
Let $a, b$ and $r$ be as above, and let $a', b'$ and $r'$ be a second such triple. Then
$$\Pi(a, b, r) = \Pi(a',b',r')$$ implies:
$$a = \pm\, a' \in \Z/q\Z$$
\end{prop}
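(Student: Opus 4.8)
The plan is to treat the element $\Pi(a,b,r)$ via its expansion (\ref{sum}) as an explicit $\C$-linear combination of the group elements $1, t, t^2, \ldots, t^{p-1}$, and then to read off coefficients. Since $\{1, t, \ldots, t^{p-1}\}$ is a $\C$-basis of the group ring $\C[\langle t \mid t^p\rangle]$, an equality $\Pi(a,b,r) = \Pi(a',b',r')$ forces equality of the coefficient of each basis element $t^c$ (with exponents read mod $p$). First I would collect, from (\ref{sum}), the terms contributing to each power of $t$: the constant term receives $4r$ plus whatever other listed terms have exponent $\equiv 0 \pmod p$; the terms $\zeta^a t^{a+b}$, $\zeta^{-a}t^{-a+b}$, etc., contribute to the powers $t^{\pm a}, t^{\pm b}, t^{\pm a \pm b}$. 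The two natural invariants to extract are the overall scale (governed by $r$) and the multiset of exponents $\{\pm a, \pm b, \pm(a+b), \pm(a-b)\}$ together with their $\zeta$-weighted coefficients.

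The core of the argument is that the data $(a,r)$ can be recovered, up to the sign ambiguity $a \mapsto -a$, from these coefficients. Concretely I would look at the ``mixed'' terms $\zeta^a t^{a+b}$ and $\zeta^{-a} t^{-a+b}$ (and their conjugate-exponent partners): the exponent locates $a+b$ or $-a+b$ modulo $p$, while the accompanying complex scalar carries the factor $\zeta^{\pm a}$. Comparing the coefficient at a given power of $t$ between the two sides yields an equation relating $r\zeta^a$ to $r'\zeta^{a'}$ (or to $r'\zeta^{-a'}$). Taking absolute values, and using that $|\zeta^a| = 1$ together with $r, r' \in \R$, isolates $r = \pm r'$; feeding this back into the coefficient equation then pins down $\zeta^a = \zeta^{\pm a'}$, hence, since $\zeta$ is a primitive $q$-th root of unity, $a \equiv \pm a' \pmod q$, which is exactly the claim.

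The hard part will be bookkeeping under the reduction $t^c = t^{c \bmod p}$: because $a, b$ are only residues mod $q$ and mod $p$ respectively while the group element depends on exponents mod $p$, distinct formal exponents such as $a+b$ and $-a+b$ can collide mod $p$, causing several terms of (\ref{sum}) to land on the same basis element $t^c$ and their $\zeta$-coefficients to add. One must therefore verify that such collisions either do not destroy the recoverability of $a$ (typically because the genuinely $a$-dependent scalar $\zeta^{\pm a}$ survives as a distinguishing weight even when exponents coincide) or can be ruled out using $p \geq 3$, $q \geq 3$, and $p \mid q$. I expect the main obstacle to be a careful case analysis handling the small-exponent coincidences — e.g.\ when $2a \equiv 0$, when $a \equiv \pm b$, or when $b \equiv -b \pmod p$ — to confirm that in every case enough independent coefficient data remains to force $a \equiv \pm a' \pmod q$, with the sign ambiguity being genuine and corresponding precisely to the symmetry $\Pi(a,b,r) = \Pi(-a,b,r)$ visible directly in the definition.
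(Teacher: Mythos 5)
Your framework coincides with the paper's: expand $\Pi(a,b,r)$ via (\ref{sum}) in the $\C$-basis $1,t,\ldots,t^{p-1}$ of $\C[\langle t \mid t^p\rangle]$, compare coefficients, recover $r$, and then identify $\zeta^{\pm a}$ among the coefficients. Your generic-case argument is essentially correct: when the nine exponents $0,\pm a,\pm b,\pm a\pm b$ are pairwise distinct mod $p$, the constant term $4r$ pins down $r=r'$, and after dividing by $r$ the coefficients of modulus $1$ are exactly $\zeta^{\pm a}$, forcing $a\equiv\pm a'\pmod q$. But there is a genuine gap: the entire mathematical content of the proposition lies in the collision cases, and there your proposed recovery mechanism does not merely need ``bookkeeping'' --- it breaks. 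These collisions cannot be ruled out by $p\geq 3$, $p\mid q$ (e.g.\ $a\equiv b\pmod p$ is perfectly possible); one must classify them (they are exactly $a\equiv\pm b$, $a\equiv\pm 2b$, $b\equiv\pm 2a$, or $2a\equiv\pm 2b$ mod $p$) and handle each. In the case $a\equiv\pm b$, for instance, the coefficient of $t^{\pm a}$ becomes $r(-2\zeta^{\pm a}-2)$ and the constant term becomes $r(4+\zeta^a+\zeta^{-a})$, so neither is $r$ visible from the constant term nor is any coefficient of the form (real scalar)$\cdot\zeta^{\pm a}$; the absolute-value step ``$r=\pm r'$, then $\zeta^a=\zeta^{\pm a'}$'' simply has nothing to grab onto. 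Similarly, for $p=5$ and $b\equiv\pm 2a$ the relevant coefficients are $r(-2\zeta^{\pm a}+\zeta^{\mp a})$, again not unimodular multiples of $r$.

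A second, unaddressed issue is cross-matching: a priori the two triples $(a,b,r)$ and $(a',b',r')$ could fall into \emph{different} collision patterns, so before comparing coefficients one must show the pattern itself is determined by $\Pi$. The paper does this by counting distinct monomials (nine in the generic case, eight when $2a\equiv\pm2b$ but $a\not\equiv\pm b$, seven when $b\equiv\pm2a$ or $a\equiv\pm 2b$ with $p\neq 5$, five or four in the remaining cases) and, where the counts agree, by structural features: the case $a\equiv\pm b$ is distinguished from the $p=5$ case $b\equiv\pm2a$ by the relation $F+2E\in\R$ between the two distinguished coefficients, after which $\zeta^{\pm a}$ is recovered as $E/|E|$ in the first case and via a real-part comparison in the second; $p=3$ needs a separate computation with real and imaginary parts; and $p=4$ (only four monomials $E+Ft+Gt^2+Ht^3$) needs the identity $1-2F/(E-G)=\zeta^{\pm a}$ with $E-G=4r$. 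None of this is deducible from your sketch; it is the proof. What you have is the correct strategy together with the easy case, with the essential case analysis deferred rather than executed.
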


\begin{proof}
This proof proceeds by considering various cases and subcases, where each case needs 
distinct careful considerations. The assumption $p\,|\,q$ implies that $a \in (\Z /q \Z)^*$ 
 has a canonical image $\bar a \in (\Z /p \Z)^*$; however, since below the context is 
 always unambiguous, we will simplify notation and consistently write $a$ for $\bar a$.

\medskip

\noindent {\bf Case 1:} First consider the special case $p = 3$. Then for any $q \in \N$ so that 
$p\,|\,q$ the conditions $a \in (\Z /q \Z)^*$ and $b \in (\Z /p \Z)^*$ imply that  $b = a = \pm 1$ mod $3$, 
or $b = -a = \pm 1$ mod $3$. In both cases we obtain, for $\epsilon = \pm 1\, $:
$$\Pi(a, b, r) = r(\zeta^a t^a - 1)(\zeta^{-a} t^{-a} - 1)(t^{b} -1)(t^{-b} -1) =$$
$$r(4+\zeta^a+\zeta^{-a}) + r(-2-2\zeta^a+\zeta^{-a}) t^{\epsilon} + 
r(-2-2\zeta^{-a}+\zeta^a) t^{-\epsilon}$$
Hence, if  $D$ is the coefficient of the monomial $t^0$, we have the equality
$$r\,  {\rm Re}\, \zeta^{ a} = \frac{1}{2}(D-4r) \, ,$$
and for the other two coefficients $E$ and $F$ we get 
$$r \, {\rm Im}\, \zeta^{a} = \pm \frac{1}{3}(E-F) \, .$$
This gives:
$$r^2 = (\frac{1}{2}(D-4r))^2 + (\frac{1}{3}(E-F))^2 $$

Furthermore, notice that $4+\zeta^a+\zeta^{-a} > 0$ for any value of $a$, so that $r$ and 
$D$ have  the same sign. Hence we can derive the values of  ${\rm Re}\, \zeta^{a}$ and of 
$\pm{\rm Im}\, \zeta^{a}$ from $\Pi(a, b, r)$, and thus also  the value of $\zeta^{\pm a}$. 
This shows that $\Pi(a, b, r) = \Pi(a', b', r')$ implies $a = \pm a'$ in $\Z/q\Z$.

\medskip

\noindent {\bf Case 2:}  Assume from now on that $p, q \geq 4$. Consider the case where
in the expression $\Pi(a, b, r) = \Pi(a',b',r')$ the 9 ``$t$-monomials'' in the sum (\ref{sum}),  
interpreted as ``polynomial'' 
in $t$,  all have distinct exponents.  In other words, the nine exponents
\begin{equation}
\label{9-terms}
0, \,\,a,\,\, -a,\,\, b,\,\, -b,\,\, a+b,\,\, a-b,\,\,-a+b,\,\, -a-b
\end{equation}
define pairwise distinct elements of $\Z/p\Z$. 

Hence $4r$ is the only term in the sum $\Pi = \Pi(a, b, r)$ with $t$-exponent equal to 0. Similarly 
$4r'$ is the only such term in  $\Pi' = \Pi(a',b',r')$. It follows that $r = r'$. Thus, after dividing both 
$\Pi$ and $\Pi'$  by  $r$, we see that $\zeta^a$ and $\zeta^{-a}$ are the only coefficients of any 
$t$-monomial with modulus  $1$ in $\Pi$, and similarly for $\Pi'$. Hence in this case as well we 
can deduce that $a = \pm\, a'$ in $\Z/q\Z$.

\medskip
\noindent {\bf Case 3:}  The remaining case is more complicated and will be dealt 
with by splitting it into various subcases.  First observe that $a \neq - a$ and $b \neq -b$ 
follows from the assumptions $p \geq 4$ and $\gcd(a, p) = \gcd(b, p) =1$. By the same 
argument we deduce that the only cases, where two or more of the nine $t$-exponents, 
listed above in (\ref{9-terms}), can agree,  
are given by: 
\begin{enumerate}
\item $a = \pm b \in \Z/p\Z$, or 
\item $a = \pm 2b \in \Z/p\Z$, or 
\item 
$b = \pm 2a \in \Z/p\Z$, or
\item 
$2a = \pm 2b \in \Z/p\Z$.
\end{enumerate}

We examine now these cases separately:

\smallskip
\noindent (a) Assume $a = b \in \Z/p\Z$ or $a = -b \in \Z/p\Z$. First observe that both 
of these two assumptions exclude (2) and (3), since the relative primeness of $a$ 
and $p$ would imply $p = 3$, contrary to our assumption $p \geq 4$. It is  easily 
checked that  in both cases 
$$\Pi(a, b, r) =$$ 
$$r[(4+\zeta^{-a}+\zeta^a)+( -2\zeta^a - 2)t^a+(- 2\zeta^{-a}  - 2) t^{-a}+
\zeta^a t^{2a}+\zeta^{-a} t^{-2a}]$$

\smallskip
\noindent (b) Assume $b = 2a \in \Z/p\Z$ or $b = -2a \in \Z/p\Z$, and assume $p = 5$, which yields 
$a = -2b \in \Z/p\Z$ or $a = 2b \in \Z/p\Z$ respectively.
We calculate, again for both cases:
$$\Pi(a, b, r) = $$
$$ r[(4 + (-2\zeta^a + \zeta^{-a}) t^{a} +(- 2\zeta^{-a}+\zeta^a) t^{-a}
+(-2 + \zeta^{-a}) t^{2a} +(-2 + \zeta^a) t^{-2a}] $$

\smallskip
\noindent (c) Assume $b = 2a \in \Z/p\Z$ or $b = -2a \in \Z/p\Z$, and assume $p \neq 5$, which 
yields $a \neq \pm 2b \in \Z/p\Z$.
We calculate, again for both cases:
$$\Pi(a, b, r) =$$
$$ r[4 +(-2\zeta^a + \zeta^{-a}) t^a +(- 2\zeta^{-a}+\zeta^a) t^{-a} -2t^{2a} +\zeta^a t^{3a}
-2t^{-2a}  + \zeta^{-a} t^{-3a}] $$

\smallskip
\noindent (d) Assume $a = 2b \in \Z/p\Z$, or $a = -2b \in \Z/p\Z$. If $p = 5$ then we deduce that 
we are back in case (b) above. Thus we can assume $p \neq 5$, which yields $b \neq -2a = -4b \in \Z/p\Z$ 
or $b \neq  2a = -4b \in \Z/p\Z$ respectively. We calculate, again for both cases:
$$\Pi(a, b, r) =$$
$$ r[4 + (\zeta^{2b} -2) t^{b}+ (\zeta^{-2b}  -2) t^{-b} -2\zeta^{2b} t^{2b} - 2\zeta^{-2b} t^{-2b}
+\zeta^{2b} t^{3b} +\zeta^{-2b} t^{-3b}] $$

\smallskip
\noindent 
(e) Assume $2a = 2b \in \Z/p\Z$, or $2a = -2b \in \Z/p\Z$. We can assume that $a \neq \pm b \in \Z/p\Z$, 
as otherwise we are back in case (a). We deduce that $p$ is even, i.e. 
$$p = 2 p'$$
for some integer $p' \geq 1$. It follows that $a = \pm b$ modulo $p'$, and thus $b = \pm a + p' \in \Z/p\Z$. 
As a consequence, from $a \neq \pm b \in \Z/p\Z$ and $\gcd(a, p) = \gcd(b, p) =1$ we deduce that $p \neq 4$ 
and $p \neq 6$, and hence $p \geq 8$. We calculate for  both, $b = a + p' \in \Z/p\Z$ or $b = - a + p' \in \Z/p\Z$ 
that
$$\Pi(a, b, r) =$$
$$r[(4 -2\zeta^a t^a - 2\zeta^{-a} t^{-a}) + (\zeta^{-a} t^{p'-2a} - 2t^{p'-a} + (\zeta^{a}+\zeta^{-a}) t^{p'} 
 - 2t^{p' + a} +\zeta^a t^{p'+2a})].$$
We note that from $p \geq 8$ it follows that all 8 terms in this ``polynomial'' have distinct $t$-exponents.

\medskip

In order to finish this Case 3, we now need to consider the other triple $a', b', r'$; a priori it 
may not fall into the same cases (a) - (e) as the triple $a, b, r$ considered above.

\medskip \noindent (A) 
Assume first that  assumption (e) holds for $a, b, r$. As this is the only case where in the expression $\Pi = \Pi(a, b, r)$ 
there are precisely 8 distinct terms, it follows that the other triple $a', b', r'$ must also be in case (e). This implies, 
by comparing the constant terms, that $r = r'$.  Hence in $\frac{1}{r}\Pi = \frac{1}{r}\Pi'$  the only non-real coefficients 
with modulus 1 are equal to $\zeta^{\pm a}$. Thus we obtain $a = \pm a'$ in $\Z/q\Z$.

\medskip \noindent (B) 
Assume next that  assumptions (c) or (d) hold for $a, b, r$. 
Then $p = 5$ is excluded, and $p \neq 4$ and $p \neq 6$ follow from $a = \pm 2b$ or $b \pm 2a$ 
and the assumption that both, $a$ and $b$, are relatively prime to $p$. Hence one has $p \geq 7$, 
which implies that all 7 terms in the expression of $\Pi = \Pi(a, b, r)$ in the cases (c) and (d) must 
be distinct. It follows that the other triple $a', b', r'$ must also be in cases (c) or (d). As in the previous 
case, by comparing the constant terms we deduce $r = r'$.  And similarly, in $\frac{1}{r}\Pi = \frac{1}{r}\Pi'$ 
the only coefficients with modulus 1 are equal to $\zeta^{\pm a}$, thus showing $a = \pm a'$ in $\Z/q\Z$.

\medskip \noindent (C) We can now assume that both triples $a, b, r$ and $a', b', r'$ are as in 
cases (a) or (b) above. If $p \neq 4$, then for both, (a) and (b), it follows from the fact that $p \neq 3$ 
that all 5 terms in the expression of $\Pi =\Pi'$ are distinct.  In case (a) there are two terms with non-zero 
$t$-exponent, which have the property that their coefficients $E$ and $F$ satisfy $F + 2E \in \R$. This is  
not true for case (b). Thus, either both triples are in case (a), or both are in case (b).

In the first case we notice that for any choice of coefficients $E$ and $F$ with $F + 2E \in \R$ 
one has $\frac{E}{|E|} = \zeta^{\pm a}$, which again yields $a = \pm a'$ in $\Z/q\Z$.
In the second case we can again compute the value $r = r'$ from the only term in 
$\Pi =\Pi'$ with zero $t$-exponent. Further, since $p = 5$ and $\gcd(a, p) = 1$, one has:
$${\rm Re}\,(-2 + \zeta^{\pm a}) < {\rm Re}\,(-2\zeta^a + \zeta^{-a}) = 
{\rm Re}\,(-2\zeta^{-a} + \zeta^{a}) < 4 $$
Hence it suffices to consider the coefficient $E$ of $\Pi = \Pi'$ with the smallest real 
part to observe that $\frac{E+2}{r} = \zeta^{\pm a} =\zeta^{\pm a'}$, which gives again 
$a = \pm a'$ in $\Z/q\Z$.

\medskip 

\noindent (D) It remains to consider the case where both triples $a, b, r$ and $a', b', r'$
are as in cases (a) or (b) above, and in addition we have $p = 4$. The latter, 
however, contradicts the assumption $p = 5$ in (b), so that in fact both triples 
belong to case (a). From $p = 4$ we obtain $t^{2a} = t^{-2a} = t^2$ so that we have 
precisely 4 terms: 
$$\Pi =\Pi' = E + F t + G t^2 + H t^3$$
Now note that $E - G= 4r$, so that one computes 
$1 - \frac{2F}{E-G} = \zeta^{\pm a} =\zeta^{\pm a'}$, which gives once more 
$a = \pm a'$ in $\Z/q\Z$.
\end{proof}

%%%%%%%%%%%%%%%%%%%%%%

%%%%%%%%%%%%%%%%%%%%%%

\section{Proof of Theorem \ref{main}}\label{sec:ProofOfMain}

In this section we give a proof of Theorem~\ref{main}. The proof given below applies also to the 
more general statement (see Remark \ref{rem:Generalization}), obtained by weakening the 
assumptions on $n$ and $m$ stated in in the second paragraph of Theorem \ref{main} 
to simply assuming that $G$ is non-exceptional.

The crucial ingredient in this proof is 
Proposition ~\ref{first-gen}. The proof of this proposition is preceded by  a sequence of simplifications
and by three technical lemmas. The proofs of the latter  are deferred to the next section.

\begin{proof}[Proof of Theorem~\ref{main}] 
We first show that the ``if'' direction in the statement of Theorem \ref{main} is a direct consequence of the 
concept of ``Nielsen equivalence'', see Definition \ref{Nielsen-equivalence-M}.

Assume  $j = k$, i.e. the indices $j$ and $k$ of the ``missing generators'' $s_j$ for $\cal U$ and $s_k$ 
for $\cal V$ are identical.  Then the assumption
$$u_i = \pm v_i \quad \text{modulo} \quad \gamma_i$$
implies that $\cal U$ and $\cal V$ are the same up to inversion of some generators, which is 
one of the allowed operations within a Nielsen equivalence class. 

Assume $j \neq k$. By assumption we have $1 = u_j = v_j$, so that $s_j$ is part of the family $\cal V$. 
We apply to the generating system $\cal V$ the operation which replaces $s_j$ first by $s_j^{-1}$, and 
then the latter by $s_k = (s_{k-1}^{-1} \ldots s_{1}^{-1}  s_{\ell}^{-1} \ldots s_{j+1}^{-1}) \cdot s_j^{-1} 
\cdot (s_{j-1}^{-1} \ldots s_{k+1}^{-1})$, if $k < j$.  For $j < k$ use the analogous operation. Such 
replacements are all Nielsens operations, and the result is a standard generating system $\cal V'$ 
with the same ``missing generator'' as $\,\cal U$. Hence the arguments for the above treated case $j = k$ 
apply, to conclude that $\cal U$ is Nielsen equivalent to $\cal V'$ and hence to $\cal V$.

\smallskip

The``only if'' statement of Theorem \ref{main} follows from Proposition \ref{first-gen} stated below: 
Since the generators  in the  presentation (\ref{Fuchs}) of $G$ can be permuted by formula 
(\ref{gernerator-permutation}), we may restrict our attention to the first standard generator $s_1$,  
and then repeat the argument for the other $\ell -1$ standard generators.

It has been already verified in Remark \ref{exceptional-quotient} (2) that the assumption  
used in Proposition \ref{first-gen}, that $G$ is non-exceptional, is weaker than the 
assumptions $m \geq 5$ for even $n$, and $m \geq 7$ if $m$ is odd, from Theorem \ref{main}.
\end{proof}

\begin{prop} \label{first-gen}
Let $G$ be a group with a presentation as in (\ref{Fuchs}), and assume that $G$ is non-exceptional 
(see Definition \ref{exceptional}). Let $\mathcal{U}$ and $\mathcal{V}$ be two standard generating 
systems for $G$ as defined in  Definition \ref{def:scheme}. If  $\,\cal{U}$ and $\mathcal{V}$ are Nielsen 
equivalent, then one has
$$u_1 = \pm v_1 \quad \text{modulo} \quad \gamma_1 \, .$$
\end{prop}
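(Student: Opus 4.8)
The plan is to apply the machinery developed in Sections 2--4, especially Corollary \ref{evaluation-M}, using a cyclic-faithful representation to convert the abstract Nielsen-equivalence hypothesis into the concrete algebraic conclusion about $u_1$. First I would observe that both standard generating systems $\cal U$ and $\cal V$ omit exactly one standard generator each; after permuting generators via (\ref{gernerator-permutation}), I may assume $\cal U = (s_1^{u_1}, s_2^{u_2}, \ldots)$ with a fixed missing index. The goal is to evaluate the Jacobian matrix $\partial \cal V / \partial \cal U$ under a well-chosen representation $\eta: \Z G \to \mathbb{M}_2(A)$ so that Corollary \ref{evaluation-M} forces $u_1 = \pm v_1 \bmod \gamma_1$.

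The key step is the choice of representation. Using Lemma \ref{reps}, since $G$ is non-exceptional (and after passing to the canonical $4$-quotient $G^\#$, which preserves non-exceptionality by Remark \ref{exceptional-quotient}(1) and whose rank agrees with that of $G$), I would produce a cyclic-faithful representation $\rho: G \to Sl_2(\C)$ of the quotient $G_0 = G/\langle\langle s_k\rangle\rangle$, where $s_k$ is chosen as the generator made trivial, so that I can isolate the behavior of $s_1$. Composing $\rho$ with the regular representation of the cyclic group $\langle t \mid t^p\rangle$ generated by the image of $s_1$ (here $p = \gamma_1$ or a suitable divisor with $p \geq 3$), I obtain a homomorphism $\eta$ whose target is the matrix ring over the group ring $\C[\langle t \mid t^p\rangle]$. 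The determinant condition $\det \eta(g) = \det \eta(-g) = 1$ holds because $\rho$ lands in $Sl_2$ and the trivial units map to scalar matrices of determinant $1$. Corollary \ref{evaluation-M} then yields $\det \eta(\partial \cal V / \partial \cal U) \equiv 1$ modulo the ideal $I_\cal U^A$ generated by the Fox-derivative images of the relators.

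I would then compute $\det \eta(\partial \cal V / \partial \cal U)$ explicitly. The Jacobian entry corresponding to expressing $s_1^{v_1}$ in terms of $s_1^{u_1}$ produces, under $\eta$, an element built from powers of $\zeta^{u_1} t^{u_1}$ and $\zeta^{v_1} t^{v_1}$; after accounting for the relator ideal the surviving data is captured precisely by an expression of the shape $\Pi(a,b,r)$ from Section 5, with $a$ encoding $u_1$ or $v_1$. The linchpin is Proposition \ref{Lemma-1-7}: the equality forced by Corollary \ref{evaluation-M} translates into $\Pi(u_1, b, r) = \Pi(v_1, b', r')$ for appropriate $b, b', r, r'$, and the proposition immediately gives $u_1 = \pm v_1 \in \Z/q\Z$, hence $u_1 = \pm v_1 \bmod \gamma_1$ after reducing along $p \mid q = \gamma_1$.

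The main obstacle I anticipate is \emph{bookkeeping the correction ideal and the off-diagonal Jacobian contributions}: one must verify that all terms coming from the relators $s_i^{\gamma_i}$ and the product relation $s_1\cdots s_\ell$ genuinely lie in $I_\cal U^A$ and are annihilated, so that the determinant reduces cleanly to a single $\Pi$-type expression rather than a tangle of cross-terms. In particular, handling the generators $s_j$ with $\gamma_j = 2$ (whose images are forced to be $-I_2$ by (\ref{eq3.6})) and ensuring the product relation is respected by the cyclic-faithful $\rho$ requires care, and it is exactly here that non-exceptionality and the delicate existence argument of Proposition \ref{faithful} (especially the odd-$n$ ``trick'' in Case (3)) are needed. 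Once the reduction to a clean $\Pi$-equality is achieved, Proposition \ref{Lemma-1-7} does the remaining work with essentially no further effort.
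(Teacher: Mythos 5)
Your proposal has the same skeleton as the paper's proof (a ``mixed'' representation into $Sl_2(\C[\langle t \mid t^p\rangle])$, a Jacobian determinant, and a final appeal to Proposition \ref{Lemma-1-7}), but two of your steps fail as stated. First, the representation you describe does not exist in general. Under any homomorphism from $G$ to $\langle t \mid t^p\rangle$, the relator $s_i^{\gamma_i}$ forces $s_i \mapsto t^{a_i}$ with $p \mid a_i\gamma_i$, and the product relator forces $\sum a_i \equiv 0 \bmod p$; so sending $s_1 \mapsto t$ requires a \emph{partner} generator to absorb $t^{-1}$, which is only consistent when some $p \geq 3$ divides both $\gamma_1$ and the partner's exponent. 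This is exactly why the paper first reduces (via the commutation trick (\ref{gernerator-permutation})) to the case $p = \gcd(\gamma_1,\gamma_2) \geq 3$, kills precisely that partner $s_2$ with Lemma \ref{reps}(2), and sets $\eta(s_1) = \mathrm{diag}(\zeta_1 t, \zeta_1^{-1}t^{-1})$, $\eta(s_2) = \mathrm{diag}(t^{-1},t)$ so that the $t$-parts cancel in the product relation. Your choices --- ``$p = \gamma_1$ or a suitable divisor'' and an unspecified killed generator $s_k$ not tied to any common factor of $\gamma_1$ --- never say where $t^{-1}$ goes, so the map you write down is not a homomorphism. Consequently your argument also has no content in the case where $\gamma_1$ is coprime to every other $\gamma_i \geq 3$: then no mixed representation of this kind exists at all, and the paper must switch to a different evaluation there (a cyclic-faithful representation of $G$ itself together with Lemma 1.9 of \cite{LuMo:NEinFuch}). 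Your proposal makes no case distinction and so covers neither situation correctly.

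Second, Corollary \ref{evaluation-M} alone cannot produce the on-the-nose equality $\Pi(u_1,b,r) = \Pi(v_1,b',r')$ that Proposition \ref{Lemma-1-7} requires: it only gives a congruence modulo the ideal $I_{\cal U}^A$, and here that ideal is \emph{not} zero, since the $\eta$-images of $\partial X_1^{\gamma_1}/\partial X_1$ and $\partial X_2^{\gamma_2}/\partial X_2$ are nonvanishing geometric series in the $\eta$-images of $x_1$ and $x_2$. The missing idea is the paper's multiplication of the determinant by $\Pi(u_1,u_2,1) = (\zeta_1^{u_1}t^{u_1}-1)(\zeta_1^{-u_1}t^{-u_1}-1)(t^{u_2}-1)(t^{-u_2}-1)$: the relator derivatives for $i \geq 3$ (including the $\gamma_i = 2$ ones) vanish outright because $1 + \zeta_i + \cdots + \zeta_i^{\gamma_i-1} = 0$, while the surviving generators of the ideal are annihilated by these four factors precisely because $p$ divides both $\gamma_1$ and $\gamma_2$. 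It is this multiplication (Lemmas \ref{independent}, \ref{trivial-class}, \ref{result}) that converts the congruence into an honest equality in $\C[\langle t \mid t^p\rangle]$ to which Proposition \ref{Lemma-1-7} applies. In your write-up, ``accounting for the relator ideal'' is exactly the step that carries all the difficulty, and without the $\Pi$-multiplication mechanism the determinant comparison takes place in an unidentified quotient ring where the conclusion $u_1 = \pm v_1$ cannot be extracted.
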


 Before the proof of this proposition is presented, we go through some preliminary considerations.
First note that without loss of generality  the hypotheses on $G$ can be  strengthened slightly by 
making use of the following observation:

\begin{lem} \label{even-4}
Consider the special case where for every standard generator $s_i$ of $G$ the exponent $\gamma_i$ 
satisfies the following: If $\gamma_i$ is even and $\gamma_i \neq 2$,  then $\gamma_i$ is divisible by 
4 (in other words: $G = G^\#$). 

Then proving Proposition \ref{first-gen} for any $G$  as in  this special case implies 
 Proposition \ref{first-gen} in full generality.
\end{lem}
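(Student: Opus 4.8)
The plan is to pass from the given group $G$ to its canonical $4$-quotient $G^\#$ (Definition \ref{2-quotient}), which by construction satisfies the special hypothesis of the lemma, and to transport the given Nielsen equivalence along the quotient map $f: G \twoheadrightarrow G^\#$. First I would record that $G^\#$ is again a group of the form (\ref{Fuchs}): since $\hat\gamma_i \mid \gamma_i$, adjoining the relations $s_i^{\hat\gamma_i}$ makes the old relations $s_i^{\gamma_i}$ redundant, so $G^\# = \langle s_1, \ldots, s_\ell \mid s_1^{\hat\gamma_1}, \ldots, s_\ell^{\hat\gamma_\ell}, s_1 \cdots s_\ell \rangle$ with all $\hat\gamma_i \geq 2$ and $\ell \geq 3$. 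Every $\hat\gamma_i$ is odd, equal to $2$, or divisible by $4$, so $G^\#$ lies in the special case (indeed $(G^\#)^\# = G^\#$, the stability noted in Remark \ref{quotients}); moreover $G^\#$ is non-exceptional by Remark \ref{exceptional-quotient}(1). Hence Proposition \ref{first-gen} is available for $G^\#$ by the assumed special case.

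Next I would check that $f$ carries the data of Proposition \ref{first-gen} for $G$ to the corresponding data for $G^\#$. The families $f(\mathcal{U})$ and $f(\mathcal{V})$ are obtained from $\mathcal{U}$ and $\mathcal{V}$ by replacing each $s_i$ with its image $\bar s_i$ of order $\hat\gamma_i$ and reading each exponent $u_i$ (resp. $v_i$) modulo $\hat\gamma_i$; since $\hat\gamma_i \mid \gamma_i$ forces $\gcd(u_i, \hat\gamma_i) = 1$, they have exactly the form (\ref{U}) with the required coprimality, and they generate $G^\#$ because $f$ is onto. Thus both are standard generating systems of $G^\#$ in the sense of Definition \ref{def:scheme}, and by Remark \ref{quotient-Nielsen} they are Nielsen equivalent. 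Applying the special case of Proposition \ref{first-gen} to $G^\#$ then yields $u_1 \equiv \pm v_1 \pmod{\hat\gamma_1}$.

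Finally I would upgrade this congruence modulo $\hat\gamma_1$ to the desired one modulo $\gamma_1$. If $\gamma_1$ is odd, equal to $2$, or divisible by $4$, then $\hat\gamma_1 = \gamma_1$ and there is nothing more to do. The only remaining case is $\gamma_1 \equiv 2 \pmod 4$ with $\gamma_1 \neq 2$; write $\gamma_1 = 2k$ with $k = \hat\gamma_1$ odd. Here the coprimality $\gcd(u_1, \gamma_1) = \gcd(v_1, \gamma_1) = 1$ forces $u_1$ and $v_1$, hence also $-v_1$, to be odd. Since $\gcd(2, k) = 1$, the Chinese Remainder Theorem identifies $\Z/2k\Z$ with $\Z/2\Z \times \Z/k\Z$: modulo $2$ each of $u_1$, $v_1$, $-v_1$ equals $1$, while modulo $k = \hat\gamma_1$ we already have $u_1 \equiv \pm v_1$. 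Matching both coordinates gives $u_1 \equiv \pm v_1 \pmod{2k}$, that is, modulo $\gamma_1$, as required.

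I expect the argument to be short, its only genuinely delicate point being this last step: passing to $G^\#$ discards precisely the factor of $2$ in $\gamma_1$ when $\gamma_1 \equiv 2 \pmod 4$, and the lost information is recoverable \emph{only} because the standard exponents are constrained to be coprime to $\gamma_1$ and therefore odd in exactly that case. The rest is bookkeeping: verifying that $G^\#$ is non-exceptional and of the required special form, and that standardness and Nielsen equivalence descend along the surjection $f$.
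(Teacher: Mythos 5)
Your proof is correct and follows essentially the same route as the paper: pass to the canonical $4$-quotient $G^\#$, use Remark \ref{exceptional-quotient}(1) and Remark \ref{quotient-Nielsen} to transport non-exceptionality and Nielsen equivalence, apply the special case of Proposition \ref{first-gen} to $G^\#$, and recover the congruence modulo $\gamma_1$ from the one modulo $\hat\gamma_1$ using that $\hat\gamma_1$ is odd while $u_1$ and $v_1$ are coprime to $\gamma_1$. The paper phrases this last step as a parity obstruction (at most one of $k$, $k+\hat\gamma_1$ can be coprime to $2\hat\gamma_1$) rather than via the Chinese Remainder Theorem, but the two arguments are identical in substance.
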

 
\begin{proof}
By assumption, $G$ has a presentation as in (\ref{Fuchs}).  For every even exponent 
$\gamma_i \neq 2$ which is not divisible by 4  define a new exponent 
$\hat\gamma_i = \frac{\gamma_i}{2}$ (which is an odd integer), and consider the 
canonical 4-quotient group 
$$G^\# = G/\langle \langle \{ s_i^{\hat \gamma_i} \}\rangle \rangle \, ,$$
 as in Definition \ref{2-quotient} (2). We know from Remark \ref{exceptional-quotient} (1) that, 
 if $G$ is non-exceptional, then so is $G^\#$.

Assume that the generating  systems $\mathcal{U}$ and $\mathcal{V}$ are Nielsen equivalent. 
Then (see Remark \ref{quotient-Nielsen})  their images in the quotient group $G^\#$ are also 
Nielsen equivalent. If Proposition \ref{first-gen} holds for $G^\#$, then we know that 
$u_1 = \pm v_1$ modulo $\hat \gamma_1$.  If $\gamma_1 \neq \hat \gamma_1$, then  
modulo $\gamma_1$ we have:
$$u_1 = \pm v_1 \quad \text{or} \quad  u_1 = \pm v_1 + \hat \gamma_1$$
However, since $\hat \gamma_1$ is odd, it follows that for any integer $k$ at most one of $k$ or 
$k + \hat \gamma_1$ can be relatively  prime to $\gamma_1 = 2 \hat \gamma_1$. Since by 
Definition \ref{def:scheme} both, $u_1$ and $v_1$, are assumed to be relatively prime to 
$\gamma_1$, we deduce that
$$u_1 = \pm v_1 \quad \text{modulo} \quad \gamma_1 \, .$$ 
\end{proof}

Next, observe that in Proposition \ref{first-gen} we can assume 
\begin{equation}\label{first-exponent}
\gamma_1 \geq 5 
\end{equation}
since for  $\gamma_1 \leq 4$ the conclusion of the proposition becomes trivial.

\smallskip

Furthermore, in the special case that $\gamma_1$ is relative prime to all other $\gamma_i \geq 3$, 
the proof of Proposition \ref{first-gen} becomes much simpler, as will be seen below. The complementary 
case, though, poses several problems, which are dealt with now, using the work already done in the 
previous sections.

\smallskip

We thus assume from now on that $\gamma_1$ is not relatively prime to some other $\gamma_i \geq 3$.  
Then we can assume further from the commutator equality (\ref{gernerator-permutation}) that 
 $i = 2$, and from the extra hypothesis  $G = G^\#$, achieved in Lemma \ref{even-4}, that 
some integer $p \geq 3$ is a common divisor of $\gamma_1$ and $\gamma_2$. 

\smallskip

From Lemma \ref{reps} (2) we know that there exists a cyclic-faithful representation
$$\eta'_1:  G_1 = G / \langle \langle s_2 \rangle \rangle \to Sl_2(\C) \, .$$
In particular,  every generator $s_i$  other than $s_2$ 
is mapped by $\eta'_1$, up to conjugation in $Sl_2(\C)$, to a primitive $\gamma_i$-matrix 
\begin{equation}
\label{M-zeta}
M(\zeta_i) = 
\left[
\begin{array}{cc}
\zeta_i & 0\\
0 & \zeta_i^{-1}
\end{array} \right] \, .
\end{equation}
More specifically, after possibly conjugating $\eta'_1$ in $Sl_2(\C)$, we can require that 
$\eta'_1(s_1) = M(\zeta_1)$, while for $i \geq 3$ we only require that $\eta'_1(s_i)$ and $M(\zeta_i)$ 
agree up to conjugation in $Sl_2(\C)$.

Let $\eta_1:G\to Sl_2(\C)$ be the composition of the quotient map $G \to G / \langle \langle s_2 \rangle \rangle$ 
with $\eta'_1$. Consider now the quotient homomorphism 
$$\eta_2: G \to \langle t \mid t^p \rangle, \,\,
s_1 \mapsto t, \,\, s_2 \mapsto t^{-1}, \,\, s_i \mapsto 1  \,\, (i \geq 3)$$
and combine the maps $\eta_1$ and $\eta_2$ to obtain a homomorphism
$$\eta: G \to Sl_2(\C[\langle t \mid t^p \rangle])$$
given by $\eta(s_i) = \eta_1(s_i)$ for $i \geq 3$, and by
$$\eta(s_1) = \left[
\begin{array}{cc}
\zeta_1 t & 0\\
0 & \zeta_1^{-1} t^{-1}
\end{array} \right]
\quad
\text{and}
\quad
\eta(s_2) = 
\left[
\begin{array}{cc}
t^{-1} & 0\\
0 & t
\end{array} \right]\, .$$

\medskip

Consider now the generating system 
$$\cal U = (x_1 = s_1^{u_1}, \ldots, x_{j-1} = s_{j-1}^{u_{j-1}}, x_{j+1} = s_{j+1}^{u_{j+1}}, 
\ldots, x_{\ell} = s_{\ell}^{u_{\ell}}),$$ 
and recall that for each generator $x_i$ the exponent $u_i$ is relatively prime to $\gamma_i$ 
(and that this holds also for the formally introduced exponent $u_j = 1$). 
Thus we  can  pick an integer $z_i \in \Z$ with 
\begin{equation}\label{exponents}
 z_i \cdot u_i = 1\,\,\,  \text{mod}\,\, \gamma_i\,,
\end{equation}
and obtain in $G$ the equalities 
$$s_i = x_i^{z_i} \quad \text{for} \quad x_i = s_i^{u_i} \, .$$
For a family $X =(X_1, \ldots, X_{j-1}, X_{j+1}, \ldots,  X_\ell )$ of formal symbols $X_i$ consider 
as in (\ref{canonical-surjection}) the free group $F(X)$ and the canonical surjection 
$$p_\cal U: F(X) \twoheadrightarrow G, \,\, X_i \mapsto x_i = s_i^{u_i}\, .$$

For any second generating system ${\mathcal{W}} = (w_1, \ldots, w_{\ell-1})$ of $G$ each 
element $w_h$ can be written as a word in the $x_i^{\pm1}$ with $i \neq j$, and hence we 
obtain a family $W$ of elements $W_h \in F(X)$ which satisfy 
\begin{equation}
\label{lift-M}
p_\cal U(W_h) = w_h \, .
\end{equation}

As has been discussed in Section \ref{prelims} (see Proposition \ref{Jacobian-M}), compute 
the $(\ell-1) \times (\ell -1)$-matrix $\partial W/ \partial X$ of Fox derivatives $\partial W_h / \partial X_i \in \Z F(X)$.
We denote by $\partial {\mathcal{W}}/\partial {\mathcal{U}}$ the associated Jacobian matrix in the matrix ring 
$\mathbb{M}_{(\ell-1) \times (\ell -1)}(\Z G)$, i.e. $\partial {\mathcal{W}}/\partial {\mathcal{U}}$ is the image of 
$\partial W/ \partial X$ under the map induced by $p_\cal U$.

In order to apply the method from Corollary \ref{evaluation-M}, we now pass to the image 
of $\partial {\mathcal{W}}/\partial {\mathcal{U}}$ under the above defined ``mixed'' representation 
$\eta$, and compute the determinant $D({\mathcal{W}}, {\mathcal{U}})$ of the resulting matrix 
$M({\mathcal{W}},{\mathcal{U}}) = \eta(\partial {\mathcal{W}}/\partial {\mathcal{U}}) \in 
Sl_{2\ell - 2}(\C[\langle t \mid t^p \rangle])$. 
As a final step, we multiply $D({\mathcal{W}}, {\mathcal{U}})$ by the product $\Pi(u_1, u_2, 1)$, 
where we use the expression
\begin{equation}
\label{Pi}
\Pi(a, b, r) := r(\zeta_1^a t^a - 1)(\zeta_1^{-a} t^{-a} - 1)(t^{b} -1)(t^{-b} -1)
\end{equation}
defined in Section \ref{evaluation},
with 
parameters specified to $a =u_1, b = u_2$ and $r = 1$.

\begin{rem}\label{lapsus}
We should alert the reader that the above introduced notation $D({\mathcal{W}}, {\mathcal{U}})$ 
is slightly misleading, since the value of this determinant may well depend not just on $\cal U$ and 
$\cal W$, but also on the chosen lifts $W_h$ of the elements $w_h \in \cal W$. However, 
Lemma \ref{independent} below ``repairs'' this lapsus, which mainly serves to avoid 
adding further extra notation.
\end{rem}

We now state three lemmas which will be proved in the next section. We then show that combining 
these three lemmas and applying Proposition \ref{Lemma-1-7} yields, without much ado, the statement of 
Proposition \ref{first-gen}.

\begin{lem} 
\label{independent}
The product
$$\Pi(u_1, u_2, 1) D({\mathcal{W}}, {\mathcal{U}}) =$$
$$(\zeta_1^{u_1} t^{u_1} - 1)(\zeta_1^{-{u_1}} t^{-{u_1}} - 1)(t^{{u_2}} -1)(t^{-{u_2}} -1)
 \det(\eta(\partial {\mathcal{W}}/\partial {\mathcal{U}}))$$
depends only on the families $\mathcal{W}$ and $\mathcal{U}$ in $G$, and not on the particular 
choice  of the words $W_h$ in the free group $F(X)$ which represent via (\ref{lift-M}) the elements 
of the generating system $\mathcal{W}$.
\end{lem}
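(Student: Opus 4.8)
The plan is to show that the product $\Pi(u_1, u_2, 1) D(\mathcal{W}, \mathcal{U})$ is invariant under the ambiguity in choosing the lifts $W_h$, and the natural tool is Corollary \ref{evaluation-M}. Recall from Proposition \ref{Jacobian-M} that two choices of lifts produce Jacobian matrices that differ by a correction matrix $B \in \mathbb{M}_{\ell-1}(\Z G)$, whose entries lie in the left ideal generated by the Fox-derivative images $p_{\mathcal U}(\partial R_j/\partial X_i)$ of the defining relators $R_j$ of $G$. Thus I would first record explicitly the relators of $G$ in the presentation (\ref{Fuchs}), namely the $s_i^{\gamma_i}$ and the product relator $s_1 s_2 \cdots s_\ell$, lift them to $F(X)$ using $s_i = x_i^{z_i}$ from (\ref{exponents}), and compute the $\eta$-images of their Fox derivatives. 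By Corollary \ref{evaluation-M}, $\det \eta(\partial \mathcal{W}/\partial \mathcal{U})$ changes, under a change of lifts, only by elements of the ideal $I^A_{\mathcal U}$ in $A = \C[\langle t \mid t^p\rangle]$ generated by all coefficients of these $\eta$-images of Fox-derivative matrices.

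\smallskip

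\noindent
The heart of the argument is then to show that multiplication by the factor $\Pi(u_1,u_2,1) = (\zeta_1^{u_1} t^{u_1}-1)(\zeta_1^{-u_1} t^{-u_1}-1)(t^{u_2}-1)(t^{-u_2}-1)$ annihilates this ideal $I^A_{\mathcal U}$, i.e. $\Pi(u_1,u_2,1) \cdot I^A_{\mathcal U} = 0$ in $A$. Concretely I would verify that each generator of $I^A_{\mathcal U}$ is killed by one of the four linear factors of $\Pi$. For the power relators $R_i = X_i^{z_i \gamma_i}$ (lifting $s_i^{\gamma_i}=1$), the Fox derivative $\partial R_i/\partial X_i$ is a ``norm'' element $1 + x_i + \cdots + x_i^{z_i\gamma_i - 1}$, and its $\eta$-image should be annihilated by the appropriate factor: for $i=1$, using $\eta(s_1) = \mathrm{diag}(\zeta_1 t, \zeta_1^{-1}t^{-1})$, the diagonal entries produce geometric sums that the factors $(\zeta_1^{u_1} t^{u_1}-1)$ and $(\zeta_1^{-u_1}t^{-u_1}-1)$ clear; for $i=2$, since $\eta(s_2)=\mathrm{diag}(t^{-1},t)$, the factors $(t^{u_2}-1)$ and $(t^{-u_2}-1)$ do the job; and for $i \geq 3$, the generator maps into $Sl_2(\C)$ with $\eta_2(s_i)=1$, so the $t$-variable is absent and the relevant sum already vanishes (as in the $1+\zeta_5+\cdots = 0$ computation in Section \ref{prelims}). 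The contribution from the product relator $s_1\cdots s_\ell$ must be handled similarly, checking that its Fox-derivative images, after applying $\eta$ and hitting the $2\times 2$-block structure, also lie in the annihilator of $\Pi$.

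\smallskip

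\noindent
Once this annihilation is established, the conclusion is immediate: by Corollary \ref{evaluation-M} the two determinants arising from two lift-choices differ by an element $b \in I^A_{\mathcal U}$, hence $\Pi(u_1,u_2,1)\big(D(\mathcal{W},\mathcal{U}) - D(\mathcal{W},\mathcal{U})'\big) = \Pi(u_1,u_2,1)\cdot b = 0$, so the product $\Pi(u_1,u_2,1)\,D(\mathcal{W},\mathcal{U})$ is independent of the lifts, as claimed. The main obstacle I anticipate is the careful bookkeeping of the Fox derivatives of the product relator $s_1 s_2 \cdots s_\ell$ once expressed in the $x_i^{z_i}$: because $\eta$ is a ``mixed'' representation valued in $2\times 2$ matrices over a group ring, one must track both the matrix structure coming from $\eta_1$ and the $t$-grading coming from $\eta_2$, and verify that the two pairs of factors in $\Pi$ (one pair tied to $s_1$, the other to $s_2$) genuinely suffice to annihilate every coefficient that survives. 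The diagonal form of $\eta(s_1)$ and $\eta(s_2)$ should make this tractable, since the troublesome relator coefficients are then geometric-type sums in $\zeta_1 t$ or $t$ that are visibly cleared by the corresponding $(\zeta_1^{\pm u_1}t^{\pm u_1}-1)$ or $(t^{\pm u_2}-1)$ factors.
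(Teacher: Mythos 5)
Your overall architecture is the same as the paper's: use Proposition \ref{Jacobian-M} and Corollary \ref{evaluation-M} to reduce the lemma to showing that $\Pi(u_1,u_2,1)$ annihilates every coefficient of the $\eta$-images of Fox derivatives of a normal generating set of $\ker p_{\mathcal U}$, then verify this relator by relator using the diagonal form of $\eta(s_1)$, $\eta(s_2)$ and the vanishing norm sums $1+\zeta_i+\cdots+\zeta_i^{\gamma_i-1}=0$ for $i\geq 3$. The genuine gap is in your identification of the relators. The words you propose -- $X_i^{z_i\gamma_i}$, obtained by rewriting $s_i^{\gamma_i}$ through $s_i=x_i^{z_i}$, together with the rewritten product relator -- do \emph{not} normally generate $\ker p_{\mathcal U}$. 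Since $x_i=s_i^{u_i}$ with $\gcd(u_i,\gamma_i)=1$ has order exactly $\gamma_i$ in $G$, the kernel contains $X_i^{\gamma_i}$ itself, and the normal closure of $X_i^{z_i\gamma_i}$ is strictly smaller: already for $G=\Z/5\Z=\langle s\mid s^5\rangle$, $x=s^2$, $z=3$, one has $X^5\notin\langle\langle X^{15}\rangle\rangle$ in $F(X)$. Naive substitution of $s_i=x_i^{z_i}$ into the old relators loses the relators $x_i^{z_iu_i-1}$ coming from the defining equations $x_i=s_i^{u_i}$; combining these with $X_i^{z_i\gamma_i}$ is what recovers $X_i^{\gamma_i}$. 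The paper settles this point by a Tietze-transformation argument producing the correct normal generating set $\{X_i^{\gamma_i}\ (i\neq j)\}\cup\{R_0\}$, where $R_0=(X_{j+1}^{z_{j+1}}\cdots X_\ell^{z_\ell}X_1^{z_1}\cdots X_{j-1}^{z_{j-1}})^{\gamma_j}$. Without a correct normal generating set you have no handle on the ideal $I_{\mathcal U}^A$, so annihilating the derivatives of your particular relators proves nothing about the correction term. (Your computations happen to be salvageable, because $\eta\circ p_{\mathcal U}$ sends $\partial X_i^{z_i\gamma_i}/\partial X_i$ to $z_i$ times the image of $\partial X_i^{\gamma_i}/\partial X_i$, so both families generate the same ideal over $\C$ -- but invoking this still presupposes knowing what the right generating set is.)

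A second, related weak point is your treatment of the relator tied to the missing generator $s_j$. Once $s_j$ is eliminated, the product relation $s_1\cdots s_\ell$ contributes no relator at all (it is used up in the elimination); what survives is $R_0$ above, the rewritten form of $s_j^{\gamma_j}$. You defer exactly this case with ``must be handled similarly,'' but it is the one place where a new idea enters: the chain-rule factorization $\partial R_0/\partial X_i=(1+X_j+\cdots+X_j^{\gamma_j-1})\,\partial X_j/\partial X_i$, where $X_j$ abbreviates the inner product word and $p_{\mathcal U}(X_j)=s_j^{-1}$, followed by a case split -- for $j\geq 3$ the norm sum already vanishes under $\eta$, while for $j\in\{1,2\}$ each coefficient is a sum of products containing the geometric sums $\Sigma_0,\Sigma_1,\Sigma_0',\Sigma_1'$ as factors, and only then does multiplication by $\Pi(u_1,u_2,1)$ kill it. Finally, your $i=2$ verification silently uses $p\mid\gamma_2$ (otherwise $(t^{\pm u_2}-1)$ does not clear the sum); this holds because $p$ was chosen as a common divisor of $\gamma_1$ and $\gamma_2$, but it should be said explicitly.
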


\begin{lem}
\label{trivial-class}
If $\mathcal{U}$ and $\mathcal{W}$ are Nielsen equivalent, then one obtains:
$$\Pi(u_1, u_2, 1) D({\mathcal{W}}, {\mathcal{U}}) = \Pi(u_1, u_2, 1)$$
\end{lem}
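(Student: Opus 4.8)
The plan is to deduce Lemma \ref{trivial-class} from Corollary \ref{evaluation-M} together with an explicit annihilation property of the factor $\Pi(u_1, u_2, 1)$. First I would note that the mixed representation $\eta \colon \Z G \to \mathbb{M}_2(A)$, with $A = \C[\langle t \mid t^p \rangle]$, satisfies the hypothesis of Corollary \ref{evaluation-M}: every $\eta(g)$ lies in $Sl_2(A)$, so $\det \eta(g) = 1$, and since we work in size $2 \times 2$ also $\det \eta(-g) = (-1)^2 \det \eta(g) = 1$. Hence, assuming $\mathcal{U}$ and $\mathcal{W}$ Nielsen equivalent, Corollary \ref{evaluation-M} provides a correction term $b \in I_{\mathcal U}^A$ with $D(\mathcal{W}, \mathcal{U}) = 1 - b$. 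Consequently $\Pi(u_1, u_2, 1)\, D(\mathcal{W}, \mathcal{U}) = \Pi(u_1, u_2, 1) - \Pi(u_1, u_2, 1)\, b$, and the lemma reduces to the single claim that $\Pi(u_1, u_2, 1)$ annihilates the ideal $I_{\mathcal U}^A$.

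Since $I_{\mathcal U}^A$ is generated by the coefficients of the matrices $\eta(p_{\mathcal U}(\partial R / \partial X_h))$ as $R$ ranges over a set of normal generators of $\ker p_{\mathcal U}$ and $X_h$ over $X$, it suffices to exhibit such a set and to check that $\Pi(u_1,u_2,1)$ kills each of these coefficients. A Tietze transformation (using the product relation to eliminate the missing generator $s_j$, and then passing from the $s_i$ to the $x_i = s_i^{u_i}$) shows that $\ker p_{\mathcal U}$ is normally generated by the power relators $X_i^{\gamma_i}$ for $i \neq j$, together with one relator $Q^{\gamma_j}$, where $Q \in F(X)$ is the word with $p_{\mathcal U}(Q) = s_j$ produced by the product relation. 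The whole computation then rests on the telescoping identity $(\alpha - 1)(1 + \alpha + \dots + \alpha^{\gamma - 1}) = \alpha^{\gamma} - 1$.

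For the power relators I would use $\partial X_i^{\gamma_i}/\partial X_h = \delta_{i,h}(1 + X_i + \dots + X_i^{\gamma_i - 1})$. When $i = 1$ the matrix $\eta(x_1)$ is diagonal with entries $\zeta_1^{\pm u_1} t^{\pm u_1}$; telescoping against the factors $(\zeta_1^{u_1} t^{u_1} - 1)$ and $(\zeta_1^{-u_1} t^{-u_1} - 1)$ of $\Pi(u_1,u_2,1)$ produces $(\zeta_1^{\pm u_1} t^{\pm u_1})^{\gamma_1} - 1 = 0$, using $\zeta_1^{\gamma_1} = 1$ and $p \mid \gamma_1$ (so $t^{u_1 \gamma_1} = 1$). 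The case $i = 2$ is identical, now telescoping against the factors $(t^{u_2} - 1)$ and $(t^{-u_2} - 1)$ and using $p \mid \gamma_2$. For $i \geq 3$ the matrix $\eta(x_i)$ lies in $Sl_2(\C)$ with eigenvalues $\zeta_i^{\pm u_i}$, which are primitive $\gamma_i$-th roots of unity; hence $1 + \eta(x_i) + \dots + \eta(x_i)^{\gamma_i - 1} = 0$ outright, and the coefficient vanishes even before multiplication by $\Pi(u_1,u_2,1)$.

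The main obstacle is the product relator $Q^{\gamma_j}$, since $Q$ is a long word and $\partial Q / \partial X_h$ is complicated. Here I would exploit the factorization $\partial Q^{\gamma_j}/\partial X_h = (1 + Q + \dots + Q^{\gamma_j - 1})\,\partial Q/\partial X_h$, so that, using $p_{\mathcal U}(Q) = s_j$,
\[
\eta\big(p_{\mathcal U}(\partial Q^{\gamma_j}/\partial X_h)\big) = \Big(\sum_{k=0}^{\gamma_j - 1}\eta(s_j)^k\Big)\,\eta\big(p_{\mathcal U}(\partial Q / \partial X_h)\big) \, .
\]
The left factor is governed entirely by $\eta(s_j)$, so I would split according to the index $j$ of the missing generator. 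If $j \geq 3$, then $\eta(s_j) \in Sl_2(\C)$ is a primitive $\gamma_j$-matrix and the left factor is already the zero matrix, exactly as in the case $i \geq 3$ above. If $j \in \{1, 2\}$, then $u_j = 1$ and $\eta(s_j)$ is the diagonal matrix $\mathrm{diag}(\zeta_1 t, \zeta_1^{-1} t^{-1})$ (for $j=1$) or $\mathrm{diag}(t^{-1}, t)$ (for $j=2$); the left factor is then diagonal, and every coefficient of the full matrix product is one of its diagonal entries times a coefficient of $\eta(p_{\mathcal U}(\partial Q/\partial X_h))$. Each such diagonal entry is telescoped to $0$ by precisely one of the four linear factors of $\Pi(u_1, u_2, 1)$, as in the power-relator computation. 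Thus $\Pi(u_1, u_2, 1)$ annihilates every generator of $I_{\mathcal U}^A$, whence $\Pi(u_1, u_2, 1)\,b = 0$ and $\Pi(u_1, u_2, 1)\,D(\mathcal{W}, \mathcal{U}) = \Pi(u_1, u_2, 1)$, as claimed.
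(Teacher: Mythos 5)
Your proof is correct, but it takes a genuinely different route from the paper's. The paper proves Lemma \ref{trivial-class} in two moves: it first invokes Lemma \ref{independent} to say that the product $\Pi(u_1,u_2,1)\,D(\mathcal{W},\mathcal{U})$ does not depend on the chosen lifts, and then uses the Nielsen equivalence together with Theorem \ref{Nielsen-M} to pick a lift $W \subset F(X)$ that is a \emph{basis} of $F(X)$; by Proposition \ref{decomposition-M} the Jacobian $\partial\mathcal{W}/\partial\mathcal{U}$ is then an exact product of elementary $\Z G$-matrices, so $D(\mathcal{W},\mathcal{U})=1$ on the nose (each elementary matrix has $\det\eta(\pm g)=1$) and no correction term ever appears. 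You instead keep an arbitrary lift, apply Corollary \ref{evaluation-M} to write $D(\mathcal{W},\mathcal{U})=1-b$ with $b\in I_{\mathcal U}^A$, and then kill $\Pi(u_1,u_2,1)\,b$ by proving that $\Pi(u_1,u_2,1)$ annihilates the evaluated Fox ideal. That annihilation argument --- the normal generators $X_i^{\gamma_i}$ and the product relator, telescoping against the four linear factors of $\Pi$, the outright vanishing of $1+\eta(x_i)+\dots+\eta(x_i)^{\gamma_i-1}$ for $i\geq 3$, and the case split on the index $j$ of the missing generator --- is precisely the computational core of the paper's proof of Lemma \ref{independent} in Section \ref{proof-of-lemmas}. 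So in effect you have merged the proofs of Lemmas \ref{independent} and \ref{trivial-class}: your route is self-contained relative to the Section \ref{prelims} machinery and establishes the identity for every choice of lift simultaneously, while the paper's route is shorter because the Fox-ideal computation is factored out into Lemma \ref{independent}, which is needed anyway (both to make the statement of Lemma \ref{trivial-class} well-posed, cf.\ Remark \ref{lapsus}, and later for Lemma \ref{result}). A small bonus of your write-up: taking $\eta(x_1)$ to have diagonal entries $\zeta_1^{\pm u_1}t^{\pm u_1}$, so that the telescoping runs against the actual factors $(\zeta_1^{\pm u_1}t^{\pm u_1}-1)$ of $\Pi(u_1,u_2,1)$, is the consistent reading of the quantities the paper denotes $\Sigma'_0,\Sigma'_1$.
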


\begin{lem}\label{result}
Given  generating systems $\mathcal{U}$ and $\mathcal{V}$ of $G$ as  in Definition \ref{def:scheme} 
one computes

$$\Pi(u_1, u_2, 1) D({\mathcal{V}}, {\mathcal{U}}) = \Pi(v_1, v_2, r)\, ,$$
for some value $r \in \R$.
\end{lem}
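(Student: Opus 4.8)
The plan is to compute the Jacobian matrix $\partial \mathcal{V}/\partial \mathcal{U}$ explicitly enough to extract its $\eta$-image determinant $D(\mathcal{V}, \mathcal{U})$, and then to show that multiplying by $\Pi(u_1, u_2, 1)$ produces exactly $\Pi(v_1, v_2, r)$. The key observation is that $\mathcal{V}$ is itself a \emph{standard} generating system, so each $y_h \in \mathcal{V}$ is a power $s_{\sigma(h)}^{v_{\sigma(h)}}$ of a single standard generator (up to the reindexing induced by the missing index $k$). First I would express each $y_h = s_i^{v_i}$ as a word $W_h$ in the generators $x_i = s_i^{u_i}$ of $\mathcal{U}$, using the relation $s_i = x_i^{z_i}$ from (\ref{exponents}): thus $y_i = s_i^{v_i} = x_i^{z_i v_i}$ for $i \neq k, j$, while the two ``special'' generators (those with indices $1$ and $2$, where $p \mid \gcd(\gamma_1,\gamma_2)$) require more care because the missing-generator bookkeeping intertwines them with the product relation.

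The next step is to exploit the very simple form of $\eta$. Since $\eta(s_i)$ for $i \geq 3$ is a diagonal primitive $\gamma_i$-matrix $M(\zeta_i)$ (or a conjugate), and $\eta_2$ kills all $s_i$ with $i \geq 3$, the Fox derivative $\partial W_h/\partial X_i = \partial x_i^{z_i v_i}/\partial X_i = 1 + x_i + \cdots + x_i^{z_i v_i - 1}$ evaluates under $\eta$ to a geometric-sum matrix. For the generators $s_i$ with $i \geq 3$ one has $\eta(s_i) = M(\zeta_i)$ of finite order $\gamma_i$, and because $\gcd(u_i, \gamma_i) = 1$ the diagonal entries of $\eta(\partial x_i^{z_i v_i}/\partial X_i)$ are geometric sums $\tfrac{\zeta_i^{\pm z_i v_i u_i} - 1}{\zeta_i^{\pm u_i} - 1} = \tfrac{\zeta_i^{\pm v_i} - 1}{\zeta_i^{\pm u_i} - 1}$, using $z_i u_i \equiv 1 \pmod{\gamma_i}$. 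The point is that off-diagonal Fox-derivative terms $\partial W_h/\partial X_i$ for $h \neq i$ either vanish under $\eta_2$ (since $t$ only survives on the indices $1, 2$) or contribute only to a block-triangular structure, so that $\det \eta(\partial \mathcal{V}/\partial \mathcal{U})$ factors as a product of the diagonal $(2\times 2)$-block determinants, one per generator, times a block involving only the two $t$-carrying generators $s_1, s_2$.

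The third step is to isolate the $(s_1, s_2)$-block, which is where $t$ lives and where the whole point of the multiplier $\Pi(u_1, u_2, 1)$ resides. Here $\eta(s_1)$ and $\eta(s_2)$ are the $t$-twisted diagonal matrices displayed before Remark~\ref{lapsus}, so the relevant determinant contributions are geometric sums in $\zeta_1 t$ and $t$. The factors $(\zeta_1^{u_1} t^{u_1} - 1)$, $(\zeta_1^{-u_1} t^{-u_1} - 1)$, $(t^{u_2} - 1)$, $(t^{-u_2} - 1)$ in $\Pi(u_1, u_2, 1)$ are precisely the denominators appearing in these geometric sums $\tfrac{(\zeta_1 t)^{v_1} - 1}{(\zeta_1 t)^{u_1} - 1}$ and $\tfrac{t^{v_2} - 1}{t^{u_2} - 1}$; multiplying clears them and replaces $u_1, u_2$ in the numerators by $v_1, v_2$. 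Collecting the product over all generators, the contributions from the $i \geq 3$ blocks combine into a real scalar $r$ (they are products of real quantities $\tfrac{\zeta_i^{v_i}-1}{\zeta_i^{u_i}-1}\cdot\tfrac{\zeta_i^{-v_i}-1}{\zeta_i^{-u_i}-1} = \tfrac{|\zeta_i^{v_i}-1|^2}{|\zeta_i^{u_i}-1|^2} \in \R$), while the $(s_1, s_2)$-block yields exactly the four factors that reconstitute $\Pi(v_1, v_2, r)$.

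I expect the main obstacle to be the precise bookkeeping of the missing generators and the product relation. Because $\mathcal{U}$ omits $s_j$ and $\mathcal{V}$ omits $s_k$ with possibly $j \neq k$, the matrices are only $(\ell-1)\times(\ell-1)$ and one of the rows/columns corresponds to a generator that must be re-expressed through the product relation $s_1 \cdots s_\ell = 1$. This is what forces $\Pi$ to appear with the specific pairing of indices $1, 2$, and verifying that the determinant really does factor cleanly — that the off-diagonal Fox-derivative terms contribute only a harmless block-triangular correction after applying $\eta$ (which sends $s_i \mapsto 1$ for $i \geq 3$ under the $t$-part) — will require a careful but essentially mechanical expansion. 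The reality of $r$ and the exact matching of the four boundary factors should then fall out, with the conjugation ambiguity in $\eta'_1(s_i)$ for $i \geq 3$ harmlessly affecting only the scalar $r$ since conjugation preserves the determinant of each block.
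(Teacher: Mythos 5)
Your overall strategy is the same as the paper's, and in the generic case it is carried out correctly: the lifts $Y_h = X_h^{z_h v_h}$, the observation that the Jacobian is diagonal except for the single row coming from the product-relation lift of $s_j^{v_j}$ (so that $D(\mathcal{V},\mathcal{U})$ is the product of the $2\times 2$ diagonal-block determinants), the real factors $\frac{|\zeta_i^{v_i}-1|^2}{|\zeta_i^{u_i}-1|^2}$ from the blocks with index $\geq 3$, and the clearing of denominators by $\Pi(u_1,u_2,1)$ via the identity $(\zeta_1^{u_1}t^{u_1}-1)\Sigma_q = \zeta_1^{qu_1}t^{qu_1}-1$ of (\ref{clue}) are exactly the paper's argument in the case $j, k \geq 3$.

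The genuine gap is in the cases $j\in\{1,2\}$ or $k\in\{1,2\}$, which the lemma must cover and which your proposed factorization gets wrong. If $j=1$, the matrix has no $s_1$-column at all ($\mathcal{U}$ omits $s_1$), and the $\zeta_1 t$-dependence does \emph{not} sit in any ``$(s_1,s_2)$-block'': it sits in the diagonal block at index $h=k$, which may well satisfy $k \geq 3$, because the product-relation lift $Y_k = Y_0^{v_1}$ produces the factor $\det\bigl(-\eta(1+s_1+\cdots+s_1^{v_1-1})\bigr) = \Sigma_{v_1}\bar\Sigma_{v_1}$, which is not real. So your claim that ``the contributions from the $i \geq 3$ blocks combine into a real scalar $r$'' fails there, and only two of the four factors of $\Pi(v_1,v_2,r)$ come from the $s_2$-block, the other two coming from the block at $h=k$; and if $j=1$, $k=2$, then \emph{all four} factors come from the single block at $h=2$. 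The paper handles this by decomposing the determinant of the special block into the three factors $\text{I}\cdot\text{II}\cdot\text{III}$ of (\ref{III}), determining in each of eight subcases (a)--(h) which of $\text{I}$, $\text{III}$ is non-real, and assembling the results in a nine-entry table indexed by the positions of $j$ and $k$ relative to $\{1,2\}$. That case analysis is the bulk of the proof of Lemma \ref{result}; it is not recoverable from your sketch by ``essentially mechanical expansion,'' because the expansion must track how the four factors of $\Pi(v_1,v_2,r)$ redistribute themselves among the blocks as a function of $(j,k)$, which is precisely what your fixed assignment of factors to blocks precludes.
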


\begin{proof} [ Proof of Proposition~\ref{first-gen}]  

(1) First consider the  case,  treated above, where $\gamma_1$ is not relatively prime to some other 
$\gamma_i \geq 3$. As shown above, we can assume $i=2$ and $p = \gcd(\gamma_1, \gamma_2) \geq 3$.

While (as pointed out in Remark \ref{lapsus}) the determinant $D({\mathcal{V}}, {\mathcal{U}})$ may well 
depend on the choice of the lifts under the map $p_\cal U$ of the elements $v_i \in \cal V$, it follows from 
Lemma ~\ref{independent} that the product $\Pi(u_1, u_2, 1) D(\mathcal{V}, \mathcal{U})$ is a true invariant 
of the two generating systems $\cal U$ and $\cal V$ of $G$. Hence combining Lemmas ~\ref{trivial-class}  
and  ~\ref{result}  allows us to conclude, for Nielsen equivalent generating systems $\cal U$ and $\cal V$ 
as in Proposition \ref{first-gen}, that $\Pi(v_1, v_2, r) = \Pi(u_1, u_2, 1)$.  Now apply  Proposition ~\ref{Lemma-1-7}, 
for $q = \gamma_1$, $(a, b, r) = (v_1, v_2, r)$ and $(a', b', r') = (u_1, u_2, 1)$, to  directly obtain the conclusion of 
Proposition ~\ref{first-gen}.

\smallskip
\noindent (2)
Let us now assume that $\gamma_1$ is relatively prime to all other $\gamma_i \geq 3$.

Then $G$ has, by Lemma \ref{even-4} and Lemma \ref{reps} (1), a representation in  
$Sl_2(\C)$ which is faithful on every cyclic subgroup that is generated by one of the generators 
$s_i$. But then Proposition \ref{first-gen} is a direct consequence of what has been shown in 
previous work of the authors, see \cite{LuMo:NEinFuch}, Lemma 1.9.  Indeed, all 
the arguments used in Lemma 1.9 of \cite{LuMo:NEinFuch} are based on the fact that, under the 
conditions given in this lemma, there is a cyclic-faithful representation of $G$ in $Sl_2(\C)$.
\footnote{\, The evaluation methods employed in \cite{LuMo:NEinFuch} are quite different from 
the techniques described explicitly in this and the previous sections. As a consequence,  
a quick inclusion of the  methods from \cite{LuMo:NEinFuch},  for the benefit of completeness of the 
presentation here, doesn't  quite seem possible.}
\end{proof}
 
\begin{rem}\label{N-torsion-question}
In their previous work \cite{LuMoGenSysTor} the authors have defined the {\em Nielsen torsion} 
$\cal N(\cal V, \cal U)$,  for any minimal generating systems $\cal U$ and $\cal V$ of a finitely 
generated group $G$. This torsion invariant depends only on the Nielsen equivalence classes of 
$\cal U$ and $\cal V$, and it is based on the same Fox derivative approach as used here.

The  invariant $\cal N(\cal V, \cal U)$  is an element in the  first $K$-group $K_1(\Z G/I_G)$ over the quotient 
of the group ring $\Z G$ modulo the Fox ideal $I_G$.  Here $I_G$ is the two-sided ideal generated by the 
$p_\cal U$-images of the  Fox derivatives $\partial R/\partial X_i$, for any $R \in \ker(p_\cal U: F(X) \twoheadrightarrow G)$ 
and any element $X_i$ of $X = (X_1, \ldots, X_n)$. More precisely, $\cal N(Y, X)$ lies in the quotient 
(called $\cal N(G)$) of $K_1(\Z G/I_G)$ modulo the subgroup $T_G$ of all trivial units, i.e. all elements given 
by $\pm g$ for  any $g \in G$.

A careful analysis of the proof of Proposition \ref{first-gen} presented in this section reveals that, for any 
two standard generating systems $\cal U, \cal V$ of a non-exceptional Fuchsian group $G$, one has actually
$$\cal N(\cal V, \cal U) \neq 1 \, ,$$ 
if the the family of exponents for $\cal U$ and $\cal V$ do not satisfy the condition $u_i = \pm v_i$ 
modulo $\gamma_i$, for all $i = 1, \ldots, \ell$. 

This is in fact a stronger statement than the one given in Theorem \ref{main}, since there are pairs of 
minimal generating systems (in different groups $G$) which are known to be not Nielsen equivalent, 
but have trivial $\cal N$-torsion. Since $\cal N$-torsion behaves functorially (see Theorem I (iv) of 
\cite{LuMoGenSysTor}), this can be used to exhibit inequivalent generating systems in certain quotients 
of $G$, while in general Nielsen inequivalence is {\em not} preserved when passing even to mild quotients 
of a group. 
\end{rem}
\bigskip
%%%%%%%%%%%%%%%%%%%%%%%%%%%%

%%%%%%%%%%%%%%%%%%%%%%%%%%%%

\section{Proof of three lemmas} \label{proof-of-lemmas}
\bigskip
 It remains to prove  Lemmas \ref{independent}, \ref{trivial-class} and \ref{result}. We will 
use the notation and terminology introduced in the previous section.

\begin{proof}[{\bf Proof of Lemma \ref{independent}}]
Any second set $W^* \subset F(X)$ of lifts of the elements in $\mathcal{W}$ under 
the map $p_\cal U$ gives rise to a second ``Jacobian  matrix'' 
$M({\mathcal{W}^*},{\mathcal{U}}) \in Sl_{2\ell - 2}(\C[\langle t \mid t^p \rangle])$ analog to 
$M({\mathcal{W}},{\mathcal{U}})$. It satisfies (see Proposition \ref{Jacobian-M})
$$M({\mathcal{W}^*}, {\mathcal{U}}) = M({\mathcal{W}},{\mathcal{U}}) + A\, ,$$
where the matrix $A$ has the property that each row  is given by the $\eta$-image 
of some $(\ell-1)$-tuple
$$(\partial R/\partial X_1, \ldots, \partial R/\partial X_{j-1}, \partial R/\partial X_{j+1}, 
\ldots, \partial R/\partial X_\ell)\, ,$$  
with $R \in \ker(p_\cal U: F(X) \to G)$.

In particular, if $\ker p_\cal U$ is normally generated by elements $R_1, \ldots, R_t$, then each coefficient 
of $A$ is the $\eta$-image of a sum of $\Z G$-left-multiples of $p_\cal U(\partial R_s/\partial X_i)$, with 
$i \in \{1, \ldots, j-1, j+1, \ldots, \ell\}$ and $s \in \{1, \ldots, t\}$.  As a consequence (see Corollary \ref{evaluation-M}), 
the determinant $D({\mathcal{W}^*}, {\mathcal{U}})$, analogously defined as the determinant 
$D({\mathcal{W}}, {\mathcal{U}})$ before Remark \ref{lapsus}, satisfies the equality
$$D({\mathcal{W}^*}, {\mathcal{U}}) =  D({\mathcal{W}}, {\mathcal{U}}) +  B\, ,$$
where $B \in \C[\langle t \mid t^p \rangle]$ is a sum of products which all contain, as factor, a coefficient 
of one of the above $(2\times 2)$-matrices $(\eta \circ p_\cal U)(\partial R_s/\partial X_i)$. Hence the claim 
of Lemma \ref{independent} follows if we prove
\begin{equation}
\label{b-test}
\Pi(u_1, u_2, 1) \, b = 0
\end{equation}
for any such coefficient $b$.

\smallskip

Observe  (by performing a suitable sequence of Tietze operations on the presentation (\ref{Fuchs}) of 
$G$) that  the kernel of  the surjection $p_\cal U: F(X) \twoheadrightarrow G, \,X_i \mapsto x_i = s_i^{u_i}$ 
is normally generated by the elements 
$$X_i^{\gamma_i} \quad \text{for all} \quad i\in \{1, \ldots, j-1, j+1, \ldots, \ell\}\, ,$$
together with the relator 
$$R_0 = (X_{j+1}^{z_{j+1}} \ldots X_{\ell}^{z_{\ell}} X_{1}^{z_{1}} \ldots X_{j-1}^{z_{j-1}})^{\gamma_j} \, ,$$
for the exponents $z_i$ as defined in (\ref{exponents}).

Now $\partial X_i^{\gamma_i} / \partial X_h = 0$ for $h \neq i$, and 
$\partial X_i^{\gamma_i} / \partial X_i = 1 + X_i + \dots + X_i^{\gamma_i -1}$. For $i \geq 3$ the 
$(\eta \circ p_\cal U)$-image of $X_i$ is conjugate to the matrix $M(\zeta_i)$ as in (\ref{M-zeta}), 
so that since
\begin{equation}
\label{zetas}
1 + \zeta_i + \zeta_i^2 + \ldots + \zeta_i^{\gamma_i -1} = 0
\end{equation}
we have $(\eta \circ p_\cal U)(\partial X_i^{\gamma_i} / \partial X_i) = 
I_2 + M(\zeta_i) + M(\zeta_i^2) + \ldots + M(\zeta_i^{\gamma_i -1}) = 0$.
Note that this argument is also true for the special 
case $\gamma_i = 2$, see Remark \ref{element-lift} (3).

\smallskip

For $i = 2$ the matrix $(\eta \circ p_\cal U)(\partial X_2^{\gamma_2} / \partial X_2)$ is conjugate to
$$\left[
\begin{array}{cc}
\Sigma_0 & 0\\
0 & \Sigma_1
\end{array} \right]$$
with $\Sigma_0 = 1 + t^{-u_2} + (t^{-u_2})^2 + \ldots + (t^{-u_2})^{\gamma_2 -1} = 1 + t + \dots + t^{\gamma_2 - 1}$.
Since $p$ is a divisor of $\gamma_2$, we have $(t^{u_2} - 1) \Sigma_0 = 0$, and thus
$$\Pi(u_1, u_2, 1) \, \Sigma_0 = 0 \, .$$
The analogous calculations shows $\Pi(u_1, u_2, 1) \, \Sigma_1 = 0 \, .$

\smallskip
For $i = 1$ the situation is similar: One obtains
$$(\eta \circ p_\cal U)(\partial X_1^{\gamma_1} / \partial X_1) = \left[
\begin{array}{cc}
\Sigma'_0 & 0\\
0 & \Sigma'_1
\end{array} \right]$$

\noindent with $\Sigma'_0 = 1 + \zeta_1 t^{u_1} + (\zeta_1 t^{u_1})^2 + \ldots + 
(\zeta_1 t^{u_1})^{\gamma_1 -1}$,
which gives
$$(\zeta_1 t^{u_1} - 1) \Sigma'_0 = \Pi(u_1, u_2, 1) \, \Sigma'_0= 0 \, .$$
For $\Sigma'_1$ the computations are  essentially the same.

\smallskip

It remains to check the relator $R_0 = (X_{j+1}^{z_{j+1}} \ldots X_{\ell}^{z_{\ell}} X_{1}^{z_{1}} \ldots X_{j-1}
^{z_{j-1}})^{\gamma_j}$. 
\smallskip
Use the chain rule for Fox-derivatives 
(see (\ref{ChainRule}))
and the abbreviation 
$X_j = X_{j+1}^{z_{j+1}} \ldots X_{\ell}^{z_{\ell}} X_{1}^{z_{1}} \ldots X_{j-1}^{z_{j-1}}$
 (recalling $i \neq j$), to compute
\begin{equation}
\label{relation-R}
\partial R_0 / \partial X_i = (1 + X_j + X_j^2 + \ldots + X_j^{\gamma_j -1}) \,  \partial X_j / \partial X_i \, .
\end{equation}
However, $p_\cal U$ maps $X_j$ to $s_j^{-1}$, which in turn is mapped by $\eta$ to a conjugate of the matrix 
$M(\zeta_j^{-1})$. Hence we are now able to apply the same argument as above for the relators $X_i^{\gamma_i}$, 
as follows: 

For the case $j \geq 3$ one computes directly, from equality 
(\ref{zetas}) 
with $j$ replacing $i$, 
that (\ref{relation-R}) gives
(for any index 
$i \neq j$) 
$$(\eta\circ p_\cal U)(\partial R_0 / \partial X_i) = 0 \, .$$

For $j = 1$ or $j = 2$ every coefficient of $(\eta\circ p_\cal U)(\partial R_0 / \partial X_i)$ is the sum of 
products  each of which contains as factor one of the terms $\Sigma_0, \Sigma_1, \Sigma'_0$ or $\Sigma'_1$ 
defined above. In this case we have shown already that multiplication with $\Pi(u_1, u_2, 1)$ annihilates 
each such sum.

\smallskip

Thus  the equality (\ref{b-test}) holds for any coefficient $b$ as desired, and hence the claim stated in 
Lemma \ref{independent} is  proved.
\end{proof}

\smallskip

\begin{proof} [\bf Proof of Lemma \ref{trivial-class}]
For any generating system $\mathcal{W}$ of $G$ we know from Lemma \ref{independent} that 
the left hand side of the equality claimed in Lemma \ref{trivial-class} doesn't depend on the 
choice of the lift $W$ of $\mathcal{W}$ under map $p_\cal U: F(X) \to G$. 

By Theorem \ref{Nielsen-M} we can use the assumption that $\mathcal{W}$ is Nielsen equivalent 
to $\mathcal{U}$ to pick such a lift $W \subset F(X)$ which is a basis of $F(X)$. It follows (see 
Proposition \ref{decomposition-M}) that the matrix $\partial {\mathcal{W}}/ \partial {\mathcal{U}}$ is a 
product of generalized elementary $\Z G$-matrices. Hence $D({\mathcal{W}}, {\mathcal{U}})$ is the 
product of the determinants of the $\eta$-images of these elementary matrices, and thus a product of 
terms of type
$$\det \eta(\pm g) \quad \text{ \rm with} \quad g \in G \, .$$
However, from the  definition of $\eta$ in section \ref{sec:ProofOfMain} we  compute directly that 
 $\det \eta(s_i) = \det \eta(-s_i) = 1$ for any $i \in \{1, \ldots, \ell\}$. This proves the claim of 
 Lemma \ref{trivial-class}.
\end{proof}

\smallskip

%%%%%%%%%%%%%%%%%%%%%%%%%%%

\begin{proof} [\bf Proof of Lemma \ref{result}]
Consider the generating system 
$${\mathcal{V}} =(y_1= s_1^{v_{1}}, \dots, y_{k-1} = s_{k-1}^{v_{k-1}}, y_{k+1} = s_{k+1}^{v_{k+1}},
 \dots,  y_\ell = s_\ell^{v_\ell} )$$
 for $G$.  For $h \notin \{j, k\}$  define the element $Y_h = X_h^{z_h v_h} \in F(X)$,
where $X_h \in X$, and the $z_h$ are given in (\ref{exponents}). 
 Recall the formal definitions $u_j = v_k = 1$ and set $y_k = s_k$.

\smallskip

Compute now (recalling $x_i = s_i^{u_i}$)
$$s_j = (s_{j+1} \dots s_\ell \, s_1 \dots s_{j-1})^{-1} = (x_{j+1}^{z_{j+1}}
 \dots x_{\ell}^{z_{\ell}} x_{1}^{z_{1}} \dots x_{j-1}^{z_{j-1}})^{-1}$$
and set
$$Y_0 = (X_{j+1}^{z_{j+1}} \ldots X_{\ell}^{z_{\ell}} X_{1}^{z_{1}} \ldots X_{j-1}^{z_{j-1}})^{-1}$$
as well as 
$$Y_k = Y_0^{v_j} = (X_{j+1}^{z_{j+1}} \ldots X_{\ell}^{z_{\ell}} X_{1}^{z_{1}} \ldots X_{j-1}^{z_{j-1}})^{-v_j} \, .$$
This gives:
$$p_\cal U(Y_h) = y_h \,\,\, \text{for}\,\,\, h \notin \{j, k\}\, , \quad \text{and} \quad 
p_\cal U(Y_0) = s_j, \,\,\, p_\cal U(Y_k) = y_j \, .$$
Now compute the $(\ell-1) \times (\ell -1)$-matrix $\partial Y/ \partial X$ of Fox derivatives 
$\partial Y_h / \partial X_i$ with $h, i \neq j$, and denote by $\partial {\mathcal{V}}/\partial {\mathcal{U}}$ 
its image in the matrix ring  $\mathbb{M}_{(\ell-1) \times (\ell -1)}(\Z G)$, under the map induced by $p_\cal U$.

\smallskip

In order to understand the matrix $\partial {\mathcal{V}}/\partial {\mathcal{U}}$, compute for 
$h \notin \{j, k\}$ the Fox derivatives
\begin{equation}
\label{diago}
\partial Y_h/\partial X_h = 1 + X_h + X_h^2 + \ldots + X_h^{z_h u_h -1}
\end{equation}
and
$$\partial Y_h/\partial X_i = 0 \quad \text{for} \quad i \neq h, k \, .$$
Furthermore, using the formula  (\ref{inverse-formula}) we obtain:
\begin{equation}
\label{diago-k}
\partial Y_k/\partial X_i = 
(1 + Y_0 + Y_0^2 + \ldots + Y_0^{v_j -1})\, \partial Y_0/\partial X_i
\end{equation}
$$ = - (1 + Y_0 + Y_0^2 + \ldots + Y_0^{v_j -1})\, Y_0 \, \partial (X_{j+1}^{z_{j+1}}
 \dots X_{\ell}^{z_{\ell}} X_{1}^{z_{1}} \ldots X_{j-1}^{z_{j-1}})/\partial X_i $$
$$ = - (1 + Y_0 + Y_0^2 + \ldots + Y_0^{v_j -1})\, (X_{j-1}^{-z_{j-1}} 
\ldots X_i^{-z_{i}}) \, \partial X_{i}^{z_{i}}/\partial X_i $$

\smallskip

It follows that for $j = k$ the matrix 
$\partial {\mathcal{V}}/\partial {\mathcal{U}} = (p_\cal U(\partial Y_h/\partial X_i)_{h, i \in \{1, \ldots, j-1, j+1, \ldots, \ell\}}$ 
is a diagonal matrix, while for $j \neq k$ it differs from a diagonal matrix only in the line with index $h = k$. 
In both cases, if we now apply the representation $\eta$  to obtain the matrix 
$M( {\mathcal{V}}, {\mathcal{U}})$, then its determinant $D( {\mathcal{V}}, {\mathcal{U}})$ is 
the product of the determinants of the $(2 \times 2)$-diagonal blocks $M_h$ of $M( {\mathcal{V}}, {\mathcal{U}})$.
Hence the equality claimed in Lemma \ref{result} is equivalent to proving the following equality:
$$[(\zeta_1^{u_1} t^{u_1} - 1)(\zeta_1^{-{u_1}} t^{-{u_1}} - 1)(t^{{u_2}} -1)(t^{-{u_2}} -1) ] \cdot
(\det M_1 \cdot \ldots \cdot\det M_{j-1} \cdot \det M_{j+1} \cdot \ldots \cdot \det M_\ell)$$
\begin{equation} \label{result-M}
= r(\zeta_1^{v_1} t^{v_1} - 1)(\zeta_1^{-{v_1}} t^{-{v_1}} - 1)(t^{{v_2}} -1)(t^{-{v_2}} -1) 
\end{equation}
for some $r \in \R$.

In order to prove (\ref{result-M}) we now evaluate the $(2 \times 2)$-matrix $M_h = \eta(\partial Y_h/\partial X_h)$ 
in the various possible cases for the indices $j, k$ and $h$, where we keep in mind that one always has
$$h \neq j \, .$$ 

\medskip

\noindent \underline{(A) Assume $h \geq 3$ and $j \geq 3$:}

\medskip

For the case $h \neq k$ we observe from (\ref{diago}) that $M_h$ is conjugate to a diagonal matrix 
with complex-conjugate terms in the diagonal. Thus we have
$$\det M_h \in \R\, ,$$
so that its value doesn't effect the equality claimed in (\ref{result-M}).

For the case that $h = k$ we obtain from (\ref{diago-k}) that $\det M_h$ is the product 
\begin{equation}\label{III}
\det M_h = \text{I} \cdot \text{II} \cdot \text{III}
\end{equation}
of the determinants of three types of matrices, namely: 

\begin{enumerate}
\item[I] 
$= \det(-\eta(1 + s_j + \dots s_j^{v_j - 1}))$, 
\item[II] 
$= \det(\eta(s_{j-1}^{-1} \dots s_1^{-1} s_\ell^{-1} \dots s_i^{-1}))$, and \item[III]
$= \det(\eta(1 + s_h^{u_h} + \dots + s_h^{u_h(z_h - 1)}))$. 
\end{enumerate}
Independently of the choice of the indices the determinant of type II is always contained in $\R$.
The same is true for the determinants of type I and III, as long as we assume,  as in the present 
case (A), that $j \geq 3$ and  $h \geq 3$. In case (B) below the product decomposition (\ref{III}) of 
$\det M_h$ is still true, but the factors I or III will take on non-real values.

\medskip
\noindent
\underline{(B) Assume $h \leq 2$ or $j \leq 2$:}
\medskip

 Case (B) will be split below into 8 subcases (a) - (h). In each of them we will apply an argument similar 
to the one that  has already been used in the proof of Lemma \ref{independent}. In order to simplify 
the exposition, we use, for any integer $q \geq 1$, the notation
$$\Sigma_q = 1 + \zeta_1 t^{u_1} + \ldots + \zeta_1^{q-1} (t^{u_1})^{q-1} \in \C[\langle t \mid t^p \rangle]$$
$$\Sigma'_q = 1 + t^{u_2} + \ldots + (t^{u_2})^{q-1} \in \Z[\langle t \mid t^p \rangle] \, ,$$
and observe that
\begin{equation} \label{clue}
(\zeta_1^{u_1} t^{u_1} - 1) \Sigma_q = (\zeta_1^{q u_1} t^{q u_1} - 1)
\quad \text{and} \quad
(t^{u_2} - 1) \Sigma'_q = (t^{q u_2} - 1) \, .
\end{equation}

Now the eight remaining cases are considered:

\medskip

\noindent $\bf{(a)}$ $h = 1$ and $k \neq 1$:
One has \,\, 
$\det M_1 = \Sigma_{z_1 v_1} \cdot \bar \Sigma_{z_1 v_1}$ (where $\bar \Sigma_q$ denotes 
 the complex-conjugate of $\Sigma_q$). Hence (\ref{clue}) gives:
$$ (\zeta_1^{u_1} t^{u_1} - 1) (\zeta_1^{-u_1} t^{-u_1} - 1) \det M_1 = $$
$$(\zeta_1^{z_1 v_1 u_1} t^{z_1 v_1 u_1} - 1) (\zeta_1^{-z_1 v_1 u_1} t^{-z_1 v_1 u_1} - 1) =
(\zeta_1^{v_1} t^{v_1} - 1) (\zeta_1^{-v_1} t^{-v_1} - 1) $$

\bigskip

\noindent $\bf(b)$ $h = 2$ and $k \neq 2$:
One has 
\,\,$\det M_2 = \Sigma'_{z_2 v_2} \cdot \bar \Sigma'_{z_2 v_2}$.
Hence (\ref{clue}) gives:
$$(t^{u_2} - 1) (t^{-u_2} - 1) \det M_2 =$$ $$ (t^{z_2 v_2 u_2} - 1) (t^{-z_2 v_2 u_2} - 1) = (t^{v_2} - 1) (t^{-v_2} - 1)$$

\bigskip

\noindent $\bf{(c)}$ $h = 1$, $k = 1$ and $j \geq 3$: 
In this case the determinant decomposition (\ref{III}) of $\det M_1$ has real factor $\text{I}$ but non-real 
factor $\text{III}$, which gives $\det M_1 = r_0 \Sigma_{z_1 v_1} \bar \Sigma_{z_1 v_1}$ for some $r_0 \in \R$. 
Hence (\ref{clue}) gives:
$$(\zeta_1^{u_1} t^{u_1} - 1) (\zeta_1^{-u_1} t^{-u_1} - 1) \det M_1 = $$
$$r_0 (\zeta_1^{z_1 v_1 u_1} t^{z_1 v_1 u_1} - 1) (\zeta_1^{-z_1 v_1 u_1} t^{-z_1 v_1 u_1} - 1)
= r_0 (\zeta_1^{v_1} t^{v_1} - 1) (\zeta_1^{-v_1} t^{-v_1} - 1)$$

\bigskip

\noindent$\bf{(d)}$ $h = 2$, $k = 2$ and $j \geq 3$:  Again, the determinant decomposition (\ref{III}) of $\det M_2$ 
has real factor $\text{I}$ but non-real factor $\text{III}$. We obtain $\det M_2 = r_0 \Sigma'_{z_2 v_2} \bar \Sigma'_{z_2 v_2}$ 
for some $r_0 \in \R$. Hence (\ref{clue}) gives:
$$(t^{u_2} - 1) (t^{-u_2} - 1) \det M_2 =$$ $$  r_0 (t^{z_2 v_2 u_2} - 1) (t^{-z_2 v_2 u_2} - 1) = r_0 (t^{v_2} - 1) (t^{-v_2} - 1)$$

\bigskip

\noindent$\bf{(e)}$ $h = k \geq 3$ and $j = 1$:  Here the determinant decomposition (\ref{III}) of 
$\det M_h$ has real factor $\text{III}$ but non-real factor $\text{I}$.  Compute that 
$\det M_h = r_0 \Sigma_{z_1 v_1} \bar \Sigma_{z_1 v_1}$ for some $r_0 \in \R$. Hence (\ref{clue}) gives
(recalling the formal convention $u_1 = 1$):
$$(\zeta_1^{u_1} t^{u_1} - 1) (\zeta_1^{-u_1} t^{-u_1} - 1) \det M_h = $$
$$r_0 (\zeta_1^{z_1 v_1 u_1} t^{z_1 v_1 u_1} - 1) (\zeta_1^{-z_1 v_1 u_1} t^{-z_1 v_1 u_1} - 1)=
r_0 (\zeta_1^{v_1} t^{v_1} - 1) (\zeta_1^{-v_1} t^{-v_1} - 1)$$

\bigskip

\noindent$\bf{(f)}$ $h = k \geq 3$ and $j = 2$: Here again, the determinant decomposition (\ref{III}) of 
$\det M_h$ has real factor $\text{III}$ but non-real factor $\text{I}$. Compute 
$\det M_h = r_0 \Sigma'_{z_2 v_2} \bar \Sigma'_{z_2 v_2}$ for some $r_0 \in \R$.
Recalling the formal convention $u_2 = 1$ we deduce from (\ref{clue}):
$$(t^{u_2} - 1) (t^{-u_2} - 1) \det M_h =$$ $$ r_0 (t^{z_2 v_2 u_2} - 1) (t^{-z_2 v_2 u_2} - 1) = 
r_0 (t^{v_2} - 1) (t^{-v_2} - 1)$$

\bigskip

\noindent$\bf{(g)}$ $h = k  = 2$ and $j = 1$:  In this case in the determinant decomposition (\ref{III}) of 
$\det M_2$  both factors $\text{I}$ and $\text{III}$ are non-real. Compute 
$\det M_2 = r_0 \Sigma_{z_1 v_1} \bar \Sigma_{z_1 v_1} \Sigma'_{z_2 v_2} \bar \Sigma'_{z_2 v_2}$ 
for some $r_0 \in \R$.  Hence (\ref{clue}) gives (recalling again $u_1 = 1$):
$$ (\zeta_1^{u_1} t^{u_1} - 1) (\zeta_1^{-u_1} t^{-u_1} - 1) (t^{u_2} - 1) (t^{-u_2} - 1) \det M_2 = $$
$$ r_0 (\zeta_1^{z_1 v_1 u_1} t^{z_1 v_1 u_1} - 1) (\zeta_1^{-z_1 v_1 u_1} t^{-z_1 v_1 u_1} - 1)
(t^{z_2 v_2 u_2} - 1) (t^{-z_2 v_2 u_2} - 1) = $$
$$r_0 (\zeta_1^{v_1} t^{v_1} - 1) (\zeta_1^{-v_1} t^{-v_1} - 1) (t^{v_2} - 1) (t^{-v_2} - 1)$$

\bigskip

\noindent$\bf{(h)}$ $h = k  = 1$ and $j = 2$:  Here too, in the determinant decomposition (\ref{III}) of $\det M_1$, 
both factors $\text{I}$ and $\text{III}$ are non-real. Compute 
$\det M_1 = r_0 \Sigma_{z_1 v_1} \bar \Sigma_{z_1 v_1} \Sigma'_{z_2 v_2} \bar \Sigma'_{z_2 v_2}$ 
for some $r_0 \in \R$.  Hence (\ref{clue}) gives (for $u_2 = 1$ as before):
$$(\zeta_1^{u_1} t^{u_1} - 1) (\zeta_1^{-u_1} t^{-u_1} - 1) (t^{u_2} - 1) (t^{-u_2} - 1) \det M_1 = $$
$$r_0 (\zeta_1^{z_1 v_1 u_1} t^{z_1 v_1 u_1} - 1) (\zeta_1^{-z_1 v_1 u_1} t^{-z_1 v_1 u_1} - 1)
(t^{z_2 v_2 u_2} - 1) (t^{-z_2 v_2 u_2} - 1) = $$
$$r_0 (\zeta_1^{v_1} t^{v_1} - 1) (\zeta_1^{-v_1} t^{-v_1} - 1) (t^{v_2} - 1) (t^{-v_2} - 1)$$

\bigskip

In order to finish the proof we have to ``paste together'' these calculations and verify, for each 
possibility of the values for the indices $j$ and $k$, that the equality (\ref{result-M}) is satisfied. 

To guide the reader through the various combinations,  the arguments needed in each case 
are assembled  into the following table: 

\smallskip
\begin{table}[!htbp]
\label{Table}

\centering
$\begin{array}{ c|c|c|c| }
 & k = 1 & k = 2 &  k \geq 3 \\
   & & & \\
\hline
   & & & \\
j = 1 & (b) &  (g) & (b) \, \& \, (e)\\
  & & & \\

\hline
  & & & \\
j = 2 & (h) & (a)  & (a) \, \& \, (f)  \\
 & & & \\
\hline
  & & & \\
j \geq 3  &  (b) \, \& \, (c)  & (a) \, \& \, (d) &  (a) \,\& \, (b) \\
 & & & \\
\hline
\end{array}$
\end{table}

\smallskip
\noindent Each of the nine cases is easily verified, where for the first two cases in the diagonal we also use the 
formal conventions $u_j = v_k = 1$. This completes the proof.
\end{proof}

%%%%%%%%%%%%%%%%%%%%

%%%%%%%%%%%%%%%
\bigskip
\section{Generalizations}\label{sec:Generalization}

\bigskip
In this short final section we discuss how the results from the previous sections generalize to Fuchsian 
groups $G$  with associated quotient orbifold that is topologically a closed surface with handles or crosscaps.
In the orientable case the presentation given in (\ref{Fuchs}) becomes
\begin{equation}
\label{orientable-Fuchs}
\langle s_1, \ldots, s_\ell, a_1, b_1, \dots, a_g, b_g \mid s_1^{\gamma_1}, 
\ldots s_\ell^{\gamma_\ell}, \,\,s_1 \ldots s_\ell \overset{g}{\underset{ j = 1}{ \Pi}}  [a_j, b_j]\rangle
\end{equation}
with $\ell \geq 1, g \geq 1$ and all exponents $\gamma_k \geq 2$.

If the orbifold associated to $G$ is non-orientable, then there is at least one crosscap, and the
 corresponding presentation for $G$ is
\begin{equation}
\label{non-orientable-Fuchs}
\langle s_1, \ldots, s_\ell, c_1, \dots, c_h \mid s_1^{\gamma_1}, 
\ldots s_\ell^{\gamma_\ell}, \,\,s_1 \ldots s_\ell c_1^2 \ldots c_h^2
\rangle
\end{equation}
with  $\ell \geq 1, h \geq 2$ or $\ell \geq 2, h \geq 1$,
and all exponents $\gamma_k \geq 2$.

Consider, as before, {\em standard generating systems} $\cal U^*$ and $\cal V^*$ of $G$, which 
are obtained from $\cal U$ and $\cal V$ as in Theorem \ref{main} by 
$$\cal U^* = \cal U \cup \{a_1, b_1, \dots, a_g, b_g\} \quad \text{and} 
\quad \cal V^* = \cal V \cup \{a_1, b_1, \dots, a_g, b_g\}$$
in the orientable case, and by 
$$U^* = \cal U \cup \{c_1, \ldots, c_h\} \quad \text{and} \quad \cal V^* = \cal V \cup 
\{c_1, \ldots, c_h\} $$
in the non-orientable case. We then obtain:

\begin{cor}\label{main+}
Let $G$ be a group with presentation (\ref{orientable-Fuchs}) or  (\ref{non-orientable-Fuchs}), 
and let $\cal U^*$ and $\cal V^*$ be as defined above. In the orientable case assume that 
$m \geq 5$ if $n$ is even, and that $m \geq 7$ if $n$ is odd. In the non-orientable $n$ and $m$ must 
satisfy the same conditions, but with $n$
replaced by $n + h$.

Then $\cal U^*$ and $\cal V^*$ are Nielsen equivalent if and only if  $u_i = \pm v_i$ 
modulo $\gamma_i$, for all $i = 1, \ldots, \ell$.
\end{cor}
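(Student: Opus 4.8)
The plan is to deduce Corollary \ref{main+} from Theorem \ref{main} by extending, essentially verbatim, the apparatus of Proposition \ref{first-gen} together with Lemmas \ref{independent}, \ref{trivial-class} and \ref{result} to the presentations (\ref{orientable-Fuchs}) and (\ref{non-orientable-Fuchs}). The only genuinely new input is to enlarge the mixed representation $\eta\colon G\to Sl_2(\C[\langle t\mid t^p\rangle])$ so that it is also defined on the extra generators $a_1,b_1,\dots,a_g,b_g$ (orientable case) respectively $c_1,\dots,c_h$ (non-orientable case), in such a way that the product relation of (\ref{orientable-Fuchs}) or (\ref{non-orientable-Fuchs}) is respected. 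Once such an $\eta$ is available, I would again fix attention on the first standard generator $s_1$, distinguish the sub-case where $\gamma_1$ is relatively prime to every other $\gamma_i\geq 3$ from its complement, and argue exactly as in the proof of Proposition \ref{first-gen}.

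For the ``if'' direction I would argue as in the proof of Theorem \ref{main}: given $u_i=\pm v_i$ mod $\gamma_i$, I transform the $\cal U$-part of $\cal U^*$ into the $\cal V$-part by the same Nielsen operations, leaving the common extra generators untouched. When $\cal U$ and $\cal V$ omit different standard generators $s_j\neq s_k$, the switch of the omitted generator uses the product relation, which in (\ref{orientable-Fuchs}) and (\ref{non-orientable-Fuchs}) now also involves the $a_j,b_j$ (resp. $c_k$); this is still a finite composition of elementary Nielsen operations.

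For the ``only if'' direction I would rerun the three lemmas. Lemma \ref{trivial-class} carries over without change, since $\det\eta(\pm g)=1$ for every generator of $G$, the new ones included. For Lemma \ref{result} the key observation is that $a_j,b_j$ (resp. $c_k$) occur identically in $\cal U^*$ and $\cal V^*$, so each such generator yields the lift $Y_h=X_h$ and hence a Jacobian row of the form $(\,0\mid I_2\,)$ (identity in its own $2\times 2$ column block, zero in all $s_i$-columns); after reordering, $\partial\cal V^*/\partial\cal U^*$ becomes block-triangular with an identity block, its determinant collapses to the $(\ell-1)\times(\ell-1)$ minor coming from the $s_i$, and the computation of Lemma \ref{result} applies verbatim to give $\Pi(u_1,u_2,1)\,D(\cal V^*,\cal U^*)=\Pi(v_1,v_2,r)$. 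For Lemma \ref{independent} the kernel of $p_{\cal U^*}$ is still normally generated by the relators $X_i^{\gamma_i}$ together with a single relator $R_0=W^{\gamma_j}$, where $W$ is the word expressing the omitted generator via the now longer product relation; since $\partial R_0/\partial X_i=(1+P+\dots+P^{\gamma_j-1})\,\partial W/\partial X_i$ with $P=p_{\cal U^*}(W)=s_j^{-1}$, whose $\eta$-image is a conjugate of a primitive $\gamma_j$-matrix (carrying the $t$-data when $j\leq 2$), the prefactor is annihilated by $\Pi(u_1,u_2,1)$ exactly as before, irrespective of the extra derivatives of $a_j,b_j,c_k$ hidden inside $\partial W/\partial X_i$. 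Applying Proposition \ref{Lemma-1-7} and running over all indices then yields $u_i=\pm v_i$ mod $\gamma_i$.

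The one place where the two cases diverge, and where I expect the main difficulty, is the existence of the extended $\eta$. In the orientable case the commutators $[a_j,b_j]$ realize arbitrary elements of $Sl_2(\C)$, so the product relation can be satisfied for any parity of $n$; this is why the hypotheses on $m$ and $n$ are unchanged. In the non-orientable case, however, the only order-two element of $Sl_2(\C)$ is $-I_2$ (Remark \ref{element-lift}(3)), so sending $c_k$ to an order-four matrix forces $\eta(c_k)^2=-I_2$, and each crosscap contributes a factor $-I_2$ to the product relation, exactly like an order-two standard generator. Consequently the parity obstruction to building a cyclic-faithful lift is governed by $n+h$ rather than $n$, which is precisely the replacement stipulated in the statement; when $n+h$ is odd one must invoke the partition trick of Cases (3)--(4) of Proposition \ref{faithful} with effective count $n+h$. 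Checking that this trick survives the presence of the squares $c_k^2$ inside the single product relator, and that the resulting $\eta$ remains cyclic-faithful on the $s_i$, is the technical heart of the argument. The residual sub-case where $\gamma_1$ is relatively prime to all other $\gamma_i\geq 3$ is then settled, as in part (2) of the proof of Proposition \ref{first-gen}, by the cyclic-faithful representation alone via the argument of \cite{LuMo:NEinFuch}.
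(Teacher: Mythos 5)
Your proposal is correct in substance and, for the orientable case, coincides with the paper's own proof: there the paper likewise extends the mixed representation $\eta$ over the handle generators (simply setting $\eta(a_k)=\eta(b_k)=I_2$) and reruns the machinery of the preceding sections, deferring details to \cite{LuMo:NEinFuch}. For the non-orientable case, however, you take a genuinely different route. The paper never touches the representation there: it adds the relators $c_1^2,\ldots,c_h^2$, asserts that the resulting quotient has a presentation as in (\ref{Fuchs}) with $n+h$ exponents equal to $2$, and deduces the ``only if'' direction from Remark \ref{quotient-Nielsen} together with Theorem \ref{main} applied to that quotient. Your route --- extending $\eta$ over the crosscaps and rerunning Lemmas \ref{independent}, \ref{trivial-class} and \ref{result} directly on $G$ --- is instead the method the paper itself uses for its stronger Theorem \ref{W-extension}, and your two key verifications (the block-triangular Jacobian with identity blocks for the repeated generators, and the annihilated prefactor $1+s_j+\cdots+s_j^{\gamma_j-1}$ in the derivatives of $R_0$) are exactly the points made there. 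As for what each approach buys: the paper's reduction is two lines long, but it rests on the claim that the quotient is of type (\ref{Fuchs}); in fact, once $c_k^2=1$ the long relator collapses to $s_1\cdots s_\ell$, so the quotient is the free product $G_0*(\Z/2)^{*h}$, which is freely decomposable and hence not of type (\ref{Fuchs}) at all, so Theorem \ref{main} does not apply to it verbatim. Your direct argument avoids this issue entirely, at the cost of having to recheck the representation-existence statements (Proposition \ref{faithful}, Lemma \ref{reps}) in the presence of crosscaps --- the ``technical heart'' you correctly identify.

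One conceptual correction: nothing forces $\eta(c_k)$ to have order four. Since $-I_2=M(i)^2$ is a square in $Sl_2(\C)$, each $\eta(c_k)$ may be chosen freely subject only to the product relation --- for instance all equal to $I_2$, or a single one equal to $M(i)$. So the crosscaps are a source of flexibility, not a parity obstruction; the genuine obstruction involves only the original $n$, which is why Theorem \ref{W-extension} requires merely that $G_0$ be non-exceptional, a hypothesis weaker than the ``$n+h$'' condition of Corollary \ref{main+}. Your rigid choice ($\eta(c_k)^2=-I_2$ for every $k$, with the partition trick run for the effective count $n+h$) is nonetheless sufficient under the stated hypotheses, so your proof stands; had you used the flexible choice, the same argument would deliver the stronger statement as well.
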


\begin{proof}
In the non-orientable case (\ref{non-orientable-Fuchs}) we quotient $G$ to a group with 
presentation as in (\ref{Fuchs}), by adding the relators $c_1^2, \ldots, c_h^2$. For the 
``only if'' direction we then use the observation (see Remark \ref{quotient-Nielsen})
that Nielsen equivalence is preserved when passing to a quotient group, while for the 
``if'' direction the same proof as given for Theorem \ref{main} applies.

In the orientable case we use the same proof as given in the previous sections: 
We extend the evaluation representations 
$$\eta: \Z G \to Sl_2(\Z[\langle t \mid t^p  \rangle])$$ 
from section \ref{sec:ProofOfMain} by mapping every $a_k$ and every $b_k$ to the 
unit matrix $I_2$. This extension method has  already been used in our previous paper \cite{LuMo:NEinFuch}, 
and all needed details are given there.
\end{proof}

\medskip

Alternatively to the quote given at the end of the last proof, one can also derive the argument directly 
from the material presented in the previous sections. This leads indeed to a much stronger statement, 
which we will sketch now:

\medskip

Consider any group $G$ with presentation
\begin{equation}
\label{non-orientable-Fuchs-gen}
\langle s_1, \ldots, s_\ell, d_1, \dots, d_q \mid s_1^{\gamma_1}, 
\ldots s_\ell^{\gamma_\ell}, \,\,s_1 \ldots s_\ell W
\rangle \, ,
\end{equation}
for an arbitrary element $W \in F(d_1, \ldots, d_q)$. Define generating systems
\begin{equation}
\label{gen-sys-+}
U^* = \cal U \cup \{d_1, \ldots, d_q\} \quad \text{and} \quad \cal V^* = \cal V \cup 
\{d_1, \ldots, d_q\} \, ,
\end{equation}
where $\cal U$ and $\cal V$ are standard generating systems of the 
quotient group 
$$G_0 = G/\langle\langle \{d_1, \ldots, d_q\}\rangle\rangle \, .$$ 
This group is clearly of type (\ref{Fuchs}) as considered in the previous sections. We note that if $G_0$ 
is non-exceptional, then the cyclic-faithful representation $\eta: G_0 \to Sl_2(\C)$ given by Lemma \ref{reps} 
(under the hypotheses stated there) lifts to a representation $\eta^*: G \to Sl_2(\C)$,  where every generator 
$d_k$ is mapped to the identity matrix $I_2$. 

As a consequence, all the arguments from the previous sections apply to $G$ as well, in particular the crucial 
argument in section 7 (proof of Lemma \ref{independent}): The $\eta^*$-image of the Fox derivatives 
$\partial R_0/\partial d_k$ vanish, independently of the choice of the element $W \in F(d_1, \ldots, d_q)$. 
This is because the formula (\ref{relation-R}) also holds for the generators $d_k$, so that 
$\partial R_0/\partial d_k$ contains 
the factor 
$$1+s_j+ s_j^2+ \ldots+s_j^{\gamma_j-1}$$ 
which is mapped by $\eta^*$ to 0. Hence only minor adaptations in the proof of Theorem \ref{main} are 
needed to give the following:

\begin{thm}
\label{W-extension}
Let $G$ be a group with presentation (\ref{non-orientable-Fuchs-gen}), and assume that the above 
quotient group $G_0$ is non-exceptional. 

Then the generating systems $\cal U^*$ and $\cal V^*$ as in (\ref{gen-sys-+}) are Nielsen equivalent if 
and only if $u_i = \pm v_i$ modulo $\gamma_i$, for all $i = 1, \ldots, \ell$.
\qed
\end{thm}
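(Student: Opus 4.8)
The plan is to run the machinery of Sections \ref{sec:ProofOfMain} and \ref{proof-of-lemmas} essentially verbatim for the enlarged group $G$, after observing that the evaluation homomorphism extends across the new generators $d_1,\ldots,d_q$ in the only way that matters: by sending each $d_k$ to the identity matrix $I_2$. For the \emph{if} direction I would argue exactly as in the proof of Theorem \ref{main}: given $u_i=\pm v_i$ modulo $\gamma_i$, the standard systems $\cal U$ and $\cal V$ of $G_0$ are carried into one another by inverting generators and by the ``change of missing generator'' move built from (\ref{gernerator-permutation}), and the same sequence of Nielsen operations applied inside $G$ transforms $\cal U^*$ into $\cal V^*$, leaving the common subfamily $\{d_1,\ldots,d_q\}$ untouched throughout.

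For the \emph{only if} direction I would first promote the ``mixed'' representation $\eta$ of $G_0$ from Section \ref{sec:ProofOfMain} to a homomorphism $\eta^*\colon G\to Sl_2(\C[\langle t\mid t^p\rangle])$ by setting $\eta^*(s_i)=\eta(s_i)$ and $\eta^*(d_k)=I_2$. Well-definedness is immediate: the relators $s_i^{\gamma_i}$ are respected because $\eta$ already respects them, and the product relator $s_1\cdots s_\ell W$ is respected because $\eta^*(W)=I_2$ (as $W\in F(d_1,\ldots,d_q)$), so that $\eta^*(s_1)\cdots\eta^*(s_\ell)\,\eta^*(W)=\eta(s_1)\cdots\eta(s_\ell)=I_2$. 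In the special case where $\gamma_1$ is coprime to every other $\gamma_i\geq 3$, I would instead lift the cyclic-faithful representation of $G_0$ supplied by Lemma \ref{reps} to $G$ in the same fashion and quote \cite{LuMo:NEinFuch}, exactly as in part (2) of the proof of Proposition \ref{first-gen}.

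With $\eta^*$ in hand I would re-establish the three lemmas of Section \ref{proof-of-lemmas} for the pair $\cal U^*,\cal V^*$. A Tietze argument as before shows that $\ker(p_{\cal U^*})$ is normally generated by the $X_i^{\gamma_i}$ (for $i\neq j$) together with a single relator $R_0=(\,\cdot\,)^{\gamma_j}$, the only difference being that the word expressing $s_j$ now also involves $W$. The heart of Lemma \ref{independent} survives: both $\partial R_0/\partial X_i$ and the new derivatives $\partial R_0/\partial D_k$ factor through $(1+X_j+\cdots+X_j^{\gamma_j-1})$ via the chain rule (\ref{relation-R}), and since $\eta^*(s_j)$ is a conjugate of $M(\zeta_j)$ for $j\geq 3$, or is handled by the factor $\Pi(u_1,u_2,1)$ for $j\leq 2$, every such coefficient is annihilated; the derivatives of $X_i^{\gamma_i}$ are treated word-for-word. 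Lemma \ref{trivial-class} needs only the additional check $\det\eta^*(d_k)=\det I_2=1$ alongside $\det\eta^*(s_i)=1$. In Lemma \ref{result} the rows of the Jacobian $\partial\cal V^*/\partial\cal U^*$ indexed by the $d_k$ vanish against the $X$-columns and form an identity block against the $D$-columns, so by a block-triangular expansion the determinant reduces to the $s$-block computed before; any $W$ occurring in that block either sits inside some $\partial/\partial X_i$ (hence contributes nothing) or appears as a left factor whose $\eta^*$-image is $I_2$. Feeding the resulting identity $\Pi(u_1,u_2,1)\,D(\cal V^*,\cal U^*)=\Pi(v_1,v_2,r)$ into Proposition \ref{Lemma-1-7} gives $u_1=\pm v_1$ modulo $\gamma_1$, and permuting the $s_i$ via (\ref{gernerator-permutation}) yields the full conclusion.

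The step I expect to require the most care is confirming that an \emph{arbitrary} word $W\in F(d_1,\ldots,d_q)$ cannot disturb these computations. The mechanism that saves us is uniform: $\eta^*$ kills every $d_k$, so $W$ evaluates to $I_2$ and drops out of every determinant, while its only structural appearance, inside the relator $R_0$, is absorbed by the vanishing factor $1+s_j+\cdots+s_j^{\gamma_j-1}$. Making this ``$W$ is invisible to $\eta^*$'' principle explicit at each of the three lemmas, rather than deriving any genuinely new estimate, is where the real work of the generalization lies.
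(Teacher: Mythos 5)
Your proposal is correct and takes essentially the same route as the paper's own (very terse) argument: extend the evaluation representation to $G$ by sending each $d_k$ to $I_2$, note via formula (\ref{relation-R}) that every Fox derivative of the product relator $R_0$ — including the new ones with respect to the $d_k$ — carries the factor $1+s_j+\cdots+s_j^{\gamma_j-1}$ and is therefore killed (directly for $j\geq 3$, by the $\Pi$-factor for $j\leq 2$), and then rerun Lemmas \ref{independent}, \ref{trivial-class}, \ref{result} and Proposition \ref{Lemma-1-7}. If anything, your write-up makes explicit what the paper leaves as ``minor adaptations,'' in particular the block-triangular reduction of the Jacobian determinant and the well-definedness of $\eta^*$.
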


In fact, by restricting the choice of $W$ slightly, one can do even better, in that also many exceptional groups 
$G_0$ satisfy the conclusion of the above theorem. The details, however, will be provided elsewhere.

\end{document}